\documentclass[11pt]{amsart}
\usepackage{graphicx}
\usepackage{amsmath,amssymb,xy,array}
\usepackage[T1]{fontenc}
\usepackage{amsfonts}
\usepackage{mathabx}
\usepackage{amsmath}
\usepackage{amssymb}
\usepackage{amsthm}
\usepackage{graphicx}
\usepackage{enumitem}
\usepackage{xcolor}
\usepackage[utf8]{inputenc}
\usepackage{hyperref}
\usepackage{mathrsfs}
\usepackage{comment}
\usepackage[toc,page]{appendix}
\usepackage{fancyhdr}
\usepackage{tikz}
\usepackage{tikz-cd}
\usepackage{longtable}
\usepackage{geometry}
\usepackage{textcomp}
\usetikzlibrary{trees}
\usetikzlibrary{arrows}
\usepackage{multirow}
\usepackage{youngtab}
\usepackage{mathdots}
\usetikzlibrary{arrows.meta}

\setcounter{MaxMatrixCols}{30}

\oddsidemargin = 0.0mm
\evensidemargin = 0.0mm
\topmargin = 0.0mm
\textheight = 240mm
\textwidth = 175mm
\voffset = -10mm
\hoffset = -4.7mm

\definecolor{yqyqyq}{rgb}{0.5019607843137255,0.5019607843137255,0.5019607843137255}\definecolor{uuuuuu}{rgb}{0.26666666666666666,0.26666666666666666,0.26666666666666666}
\definecolor{uququq}{rgb}{0.25098039215686274,0.25098039215686274,0.25098039215686274}
\definecolor{wwwwww}{rgb}{0.4,0.4,0.4}
\definecolor{uuuuuu}{rgb}{0.26666666666666666,0.26666666666666666,0.26666666666666666}

\setlist[itemize]{leftmargin=6mm}

\renewcommand{\P}{\mathbb P}

\newcommand{\Aut}{\operatorname{Aut}}

\DeclareMathOperator{\Cl}{Cl}

\DeclareMathOperator{\lin}{lin}

\DeclareMathOperator{\Hilb}{Hilb}
\DeclareMathOperator{\Chow}{Chow}

\DeclareMathOperator{\Hom}{Hom}

\DeclareMathOperator{\Exc}{Exc}

\DeclareMathOperator{\Jn}{Join}

\DeclareMathOperator{\Eff}{Eff}
\DeclareMathOperator{\Nef}{Nef}
\DeclareMathOperator{\Mov}{Mov}
\DeclareMathOperator{\Pic}{Pic}
\DeclareMathOperator{\rank}{rank}

\renewcommand{\sec}{\mathbb{S}ec}

\DeclareMathOperator{\Cox}{Cox}

\renewcommand{\P}{\mathbb{P}}

\newcommand{\mmn}{\overline{M}_{0,0}(\mathbb{P}^n,2)}

\newcommand{\mmnm}{\overline{M}_{0,0}(\mathbb{P}^n \times \mathbb{P}^m,(1,1))}
\newcommand{\mmgu}{\overline{M}_{0,0}(\mathbb{G}(1,n),2)}

\newtheorem{thm}{Theorem}[section]

\newtheorem{Lemma}[thm]{Lemma}
\newtheorem{Proposition}[thm]{Proposition}

\newtheorem{Corollary}[thm]{Corollary}

\theoremstyle{definition}

\newtheorem{Definition}[thm]{Definition}
\newtheorem{Remark}[thm]{Remark}

\newtheorem{Notation}[thm]{Notation}

\hypersetup{pdfpagemode=UseNone}
\hypersetup{pdfstartview=FitH}

%immagini
\graphicspath{{immagini/}}
\usepackage{pgf,tikz,pgfplots}
\pgfplotsset{compat=1.15}
\usepackage{mathrsfs}
\usetikzlibrary{arrows}

\begin{document}

\title{Complete singular collineations and quadrics}

\author[Alex Casarotti]{Alex Casarotti}
\address{\sc Alex Casarotti\\ Dipartimento di Matematica, Universit\`a di Trento, Via Sommarive 14, 38123 Trento, Italy}
\email{alex.casarotti@unitn.it}

\author[Elsa Corniani]{Elsa Corniani}
\address{\sc Elsa Corniani\\ Dipartimento di Matematica e Informatica, Universit\`a di Ferrara, Via Machiavelli 30, 44121 Ferrara, Italy}
\email{elsa.corniani@unife.it}

\author[Alex Massarenti]{Alex Massarenti}
\address{\sc Alex Massarenti\\ Dipartimento di Matematica e Informatica, Universit\`a di Ferrara, Via Machiavelli 30, 44121 Ferrara, Italy}
\email{alex.massarenti@unife.it}

\date{\today}
\subjclass[2020]{Primary 14M27, 14E30; Secondary 14J45, 14N05, 14E07, 14M27}
\keywords{Wonderful compactifications; Mori dream spaces; Cox rings; Spherical varieties; Stable maps}

\begin{abstract}
We construct wonderful compactifications of the spaces of linear maps, and symmetric linear maps of a given rank as blow-ups of secant varieties of Segre and Veronese varieties. Furthermore, we investigate their birational geometry and their relations with some spaces of degree two stable maps. 
\end{abstract}

\maketitle

\setcounter{tocdepth}{1}

\tableofcontents

\section{Introduction}

We construct the wonderful compactification of the space of linear maps of rank $h$, between two vector spaces of dimensions $n+1$ and $m+1$, as a sequence of blow-ups of secant varieties of Segre varieties. This generalizes a construction, due to I. Vainsencher, for complete collineations that is maps of maximal rank \cite[Theorem 1]{Va84}.

Complete collineations have been widely studied from the algebraic, enumerative and birational viewpoint since the 19th-century \cite{Ch64}, \cite{Gi03}, \cite{Hi75}, \cite{Hi77}, \cite{Sc86}, \cite{Se84}, \cite{Se48}, \cite{Se51}, \cite{Se52}, \cite{Ty56}, \cite{Va82}, \cite{Va84}, \cite{TK88}, \cite{LLT89}, \cite{Tha99}, \cite{Ce15}, \cite{Ma18a}, \cite{Ma18b}.

Spaces of complete collineations are examples of wonderful compactifications. The \textit{wonderful compactification} of a symmetric space was introduced by C. De Concini and C. Procesi in \cite{DP83}. Later on, D. Luna gave a more general definition of wonderful variety and then he proved that, according to his definition, all wonderful varieties are spherical \cite{Lu96}. 

Let $\mathscr{G}$ be a reductive group, and $\mathscr{B}\subset\mathscr{G}$ a Borel subgroup. A \textit{spherical variety} is a variety admitting an action of $\mathscr{G}$ with an open dense $\mathscr{B}$-orbit. For \textit{wonderful varieties} we require in addition the existence of an open orbit whose complementary set is a simple normal crossing divisor, $E_1\cup\dots\cup E_r$, where the $E_i$ are the $\mathscr{G}$-invariant prime divisors in the variety $X$.

Let $\mathcal{S}^{n,m}$ be the image of the Segre embedding $\mathbb{P}^n\times\mathbb{P}^m\rightarrow\mathbb{P}^N$, and $\sec_h(\mathcal{S}^{n,m})$ the $h$-secant variety of $\mathcal{S}^{n,m}$, that is the subvariety of $\mathbb{P}^N$ obtained as the closure of the union of all $(h-1)$-planes spanned by $h$ general points of $\mathcal{S}^{n,m}$. We summarize the main results in Theorem \ref{comp} and Propositions \ref{picrank}, \ref{effnef}.
\begin{thm}\label{A}
Consider the following sequence of blow-ups 
$$\mathcal{C}(n,m,h):=\sec_{h}^{(h-1)}(\mathcal{S}^{n,m})\rightarrow \sec_{h}^{(h-2)}(\mathcal{S}^{n,m})\rightarrow\dots\rightarrow \sec_{h}^{(1)}(\mathcal{S}^{n,m})\rightarrow \sec_{h}^{(0)}(\mathcal{S}^{n,m}):=\sec_{h}(\mathcal{S}^{n,m})$$
where $\sec_{h}^{(k)}(\mathcal{S}^{n,m})\rightarrow \sec_{h}^{(k-1)}(\mathcal{S}^{n,m})$ is the blow-up of $\sec_{h}^{(k-1)}(\mathcal{S}^{n,m})$ along the strict transform of $\sec_k(\mathcal{S}^{n,m})$ for $k = 1,\dots, h-1$. Denote by $E_k\subset\mathcal{C}(n,m,h)$ the exceptional divisor over $\sec_k(\mathcal{S}^{n,m})$ for $k=1,\dots,h-1$.

The $(SL(n+1)\times SL(m+1))$-action
$$\begin{array}{cccc}
(SL(n+1)\times SL(m+1))\times \mathbb{P}^N & \longrightarrow & \mathbb{P}^N\\
((A,B),Z) & \longmapsto & AZB^{t}
\end{array}$$
induces an $(SL(n+1)\times SL(m+1))$-action on $\mathcal{C}(n,m,h)$, and $\mathcal{C}(n,m,h)$ is wonderful.  

Assume that $h < n+1$ and fix homogeneous coordinates $[z_{0,0}:\dots:z_{n,n}]$ on $\mathbb{P}^N$. For $i=1,\dots,h$ we define the divisors $D_i^{\mathcal{C}}$ as the strict transforms in $\mathcal{C}(n,m,h)$ of the divisor given by the intersection of  
$$\det \begin{pmatrix}
z_{0,0} & \dots & z_{0,i-1}\\
\vdots & \ddots & \vdots \\
z_{i-1,0} & \dots & z_{i-1,i-1}\\
\end{pmatrix}=0$$
with $\mathcal{C}(n,m,h)$. The divisor $D_h^\mathcal{C}$ in $\mathcal{C}(n,m,h)$ has two irreducible components $H_1^{\mathcal{C}}, H_2^{\mathcal{C}}$, and the Picard rank of $\mathcal{C}(n,m,h)$ is $\rho(\mathcal{C}(n,m,h)) = h+1$. Moreover, the effective cone $\Eff(\mathcal{C}(n,m,h))$ is generated by $E_1,\dots,E_{h-1},H_1^{\mathcal{C}}, H_2^{\mathcal{C}}$ and the nef cone $\Nef(\mathcal{C}(n,m,h))$ is generated by $D_1^{\mathcal{C}},\dots,D_{h-1}^{\mathcal{C}},H_1^{\mathcal{C}}, H_2^{\mathcal{C}}$.
\end{thm}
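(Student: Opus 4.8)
The plan is to establish, in order, the equivariance and smoothness of the tower, the wonderful structure, the splitting of $D_h^{\mathcal{C}}$, the Picard rank and effective cone, and finally the nef cone; the wonderful structure carries most of the weight, and the cone statements are comparatively formal once it is in hand. Since the $(SL(n+1)\times SL(m+1))$-action preserves the rank of a matrix, each $\sec_k(\mathcal{S}^{n,m})$, the locus of matrices of rank at most $k$, is invariant, so every center of the tower is invariant and the action lifts compatibly to each $\sec_h^{(k)}(\mathcal{S}^{n,m})$ and to $\mathcal{C}(n,m,h)$. The technical core, generalizing Vainsencher's analysis for complete collineations, is a local study in the affine charts of $\P^N$ in which $Z$ carries an invertible leading block: there one shows by induction on $k$ that the singular locus of $\sec_h^{(k)}(\mathcal{S}^{n,m})$ is exactly the strict transform of $\sec_{k+1}(\mathcal{S}^{n,m})$, that this strict transform is smooth, its own determinantal singularities having been resolved by the previous blow-ups along $\sec_1(\mathcal{S}^{n,m}),\dots,\sec_k(\mathcal{S}^{n,m})$, and that it meets the accumulated exceptional divisors transversally. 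Blowing it up therefore improves the singularities by one step, so after the last blow-up $\mathcal{C}(n,m,h)=\sec_h^{(h-1)}(\mathcal{S}^{n,m})$ is smooth; in the same charts one reads off that the open orbit is the rank-$h$ locus, that its complement is $E_1\cup\dots\cup E_{h-1}$, that this union is simple normal crossing, and that each $E_k$ is an invariant prime divisor dominating $\sec_k(\mathcal{S}^{n,m})$, which is exactly the wonderful condition. I expect this chart-by-chart smoothness-and-transversality verification to be the main obstacle.

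For the splitting of $D_h^{\mathcal{C}}$ I would parametrize a rank-$h$ matrix as $Z=UV^{t}$ with $U$ of size $(n+1)\times h$ and $V$ of size $(m+1)\times h$; the top-left $h\times h$ minor then factors as $\det(U_{[h]})\det(V_{[h]})$, where $U_{[h]},V_{[h]}$ are the top $h\times h$ blocks. This exhibits $D_h$ as a union of two prime divisors, the column-space and the row-space degeneracy loci, which become $H_1^{\mathcal{C}}$ and $H_2^{\mathcal{C}}$ after strict transform; the hypothesis $h<n+1$ (and its transpose) makes both proper and nonempty, whereas for $i<h$ the corresponding minor is irreducible and $D_i^{\mathcal{C}}$ stays prime. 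These two divisors are the colors of the wonderful variety, that is, the $B$-stable prime divisors that are not $G$-invariant.

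For the Picard rank I would first compute $\Cl(\sec_h(\mathcal{S}^{n,m}))\cong\mathbb{Z}^2$, generated by $H_1,H_2$: the resolution obtained by choosing an $(m+1-h)$-dimensional kernel realizes the smooth locus as a projective bundle over a Grassmannian, and it is an isomorphism in codimension one since $\sec_{h-1}(\mathcal{S}^{n,m})$ has codimension at least two in $\sec_h(\mathcal{S}^{n,m})$ under the standing hypotheses. As the $h-1$ blow-ups each add one independent exceptional class, $\Pic(\mathcal{C}(n,m,h))=\mathbb{Z}H_1^{\mathcal{C}}\oplus\mathbb{Z}H_2^{\mathcal{C}}\oplus\bigoplus_{k=1}^{h-1}\mathbb{Z}E_k$ has rank $h+1$. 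For the effective cone, being spherical, $\mathcal{C}(n,m,h)$ is a Mori dream space, and by Brion's description of divisors on a spherical variety its class group and effective cone are generated by the $B$-stable prime divisors; these are precisely the boundary divisors $E_1,\dots,E_{h-1}$ together with the two colors $H_1^{\mathcal{C}},H_2^{\mathcal{C}}$, giving $\Eff(\mathcal{C}(n,m,h))=\langle E_1,\dots,E_{h-1},H_1^{\mathcal{C}},H_2^{\mathcal{C}}\rangle$.

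Finally, for the nef cone I would determine the Mori cone explicitly: its extremal rays are the classes of the fibers of the contractions $E_k\to\sec_k(\mathcal{S}^{n,m})$ together with the strict transforms of suitable lines in $\sec_h(\mathcal{S}^{n,m})$ lying in successive secant strata. Writing each $D_i^{\mathcal{C}}=iH-\sum_k a_{ik}E_k$ in terms of the pullback $H$ of the hyperplane class, with the multiplicities $a_{ik}$ read off from the local charts of the first paragraph, one checks that $D_1^{\mathcal{C}},\dots,D_{h-1}^{\mathcal{C}},H_1^{\mathcal{C}},H_2^{\mathcal{C}}$ pair nonnegatively with every such curve, hence are nef, and that the resulting intersection matrix is nondegenerate with the correct triangular shape, so that these $h+1$ classes form the dual basis to the spanning set of curves. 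This identifies them as the extremal generators of $\Nef(\mathcal{C}(n,m,h))$ and completes the proof.
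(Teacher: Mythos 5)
Your overall architecture follows the paper's: a local/inductive analysis of the blow-up tower for smoothness and the wonderful structure (the paper organizes this via the tangent cones of $\sec_h(\mathcal{S}^{n,m})$ along the secant stratification and an induction showing each $E_i$ fibers over $\mathcal{C}(n,m,i)$ with fibers $\mathcal{C}(n-i,m-i,h-i)$, which is essentially your chart computation), the factorization of the top-left $h\times h$ minor for the splitting of $D_h^{\mathcal{C}}$, and spherical-variety machinery for the cones. Your Picard-rank computation via $\Cl(\sec_h(\mathcal{S}^{n,m}))\cong\mathbb{Z}^2$ and the Grassmannian-bundle resolution is a genuinely different and perfectly good route; the paper instead uses Brion's exact sequence for wonderful varieties together with the character group of the stabilizer of a rank-$h$ matrix.

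There is, however, a concrete error in your treatment of the cones: you assert twice that $H_1^{\mathcal{C}}$ and $H_2^{\mathcal{C}}$ are \emph{the} colors, i.e.\ the only $\mathscr{B}$-stable prime divisors that are not $\mathscr{G}$-stable. This is false: each $D_i^{\mathcal{C}}$ for $i=1,\dots,h-1$ (the strict transform of the top-left $i\times i$ minor locus) is also a color, so there are $h+1$ colors in total. Your claim is even inconsistent with your own Picard computation, since for a wonderful variety $\Pic$ is freely generated by the colors, which forces exactly $h+1$ of them. Consequently the Brion/ADHL statement you invoke only gives that $\Eff(\mathcal{C}(n,m,h))$ is generated by the $2h$ divisors $E_1,\dots,E_{h-1},D_1^{\mathcal{C}},\dots,D_{h-1}^{\mathcal{C}},H_1^{\mathcal{C}},H_2^{\mathcal{C}}$, and an additional argument is needed to discard the $D_i^{\mathcal{C}}$ as extremal rays; the paper does this by observing that each $D_i^{\mathcal{C}}$ induces a birational morphism contracting $E_i$, hence is big and lies in the interior of the effective cone. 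The same misidentification affects your nef cone step: the clean argument is that the nef cone of a wonderful variety is generated by its full set of colors, which here is precisely $D_1^{\mathcal{C}},\dots,D_{h-1}^{\mathcal{C}},H_1^{\mathcal{C}},H_2^{\mathcal{C}}$; your proposed alternative via an explicit dual basis of extremal curves could in principle work, but as written it is only a plan (no curves or intersection numbers are produced) and it cannot substitute for the missing identification of the colors.
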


In the case $h = n+1$ we present similar results. Furthermore, we extend the construction in Theorem \ref{A}, by replacing $\mathcal{S}^{n,m}$ with the Veronese variety $\mathcal{V}^{n}$, to the space $\mathcal{Q}(n,h)$ of rank $h$ symmetric complete collineations.  

Note that both $\sec_h(\mathcal{S}^{n,m})$ and $\sec_h(\mathcal{V}^{n})$ are singular, the wonderful varieties $\mathcal{C}(n,m,h)$ and $\mathcal{Q}(n,h)$ are examples of the process producing a wonderful compactification from a conical one in \cite{MP98}.

Spherical varieties are Mori dream spaces. Roughly, a \textit{Mori dream space} is a projective variety $X$ whose cone of effective divisors $\Eff(X)$ admits a well-behaved decomposition into convex sets, called Mori chamber decomposition, and these chambers are the nef cones of the birational models of $X$ \cite{HK00}.
  
In Propositions \ref{mcdq3} and \ref{mcd_C} we give a detailed description of the Mori chamber decompositions of $\mathcal{C}(n,m,h)$ and $\mathcal{Q}(n,h)$ when their Picard rank is at most three. Moreover, in Section \ref{sec4} we investigate the connection of $\mathcal{C}(n,m,h)$ and $\mathcal{Q}(n,h)$ with some Kontsevich spaces of degree two maps. 

Kontsevich moduli spaces are denoted by $\overline{M}_{g,n}(X,\beta)$ where $X$ is a projective scheme and $\beta\in H_2(X,\mathbb{Z})$ is the homology class of a curve in $X$. A point in $\overline{M}_{g,n}(X,\beta)$ corresponds to a holomorphic map $\alpha$ from an $n$-pointed genus $g$ curve $C$ to $X$ such that $\alpha_{*}([C])=\beta$. When $X$ is a projective space or a Grassmannians the class $\beta$ is completely determined by its degree, similarly when $X$ is the product of two projective spaces we identify the class $\beta$ with its the bidegree. By Propositions \ref{isoQ}, \ref{isoC}, \ref{mapgr}, \ref{aut_MG}, and Corollary \ref{aut_M} we have the following:
 
\begin{thm}\label{B}
There are isomorphisms 
$$\mathcal{C}(n,m,2)\xrightarrow{\sim} \mmnm$$
and 
$$\sec_3^{(1)}(\mathcal{V}^n)\xrightarrow{\sim} \mmn.$$
Furthermore, there is a $2$-to-$1$ morphism 
$$\overline{M}_{0,0}(\mathbb{G}(1,n),2)\rightarrow \sec_4^{(2)}(\mathcal{V}^n).$$
For the automorphism groups we have that 
$$
\Aut(\overline{M}_{0,0}(\mathbb{P}^n\times\mathbb{P}^m,(1,1))) \cong
\left\lbrace\begin{array}{ll}
PGL(n+1)\times PGL(m+1) & \text{if n}< \textit{m};\\ 
S_2 \ltimes (PGL(n+1)\times PGL(n+1)) & \text{if n} = \textit{m}\geq 2;
\end{array}\right.
$$
and $\Aut(\overline{M}_{0,0}(\mathbb{P}^1\times\mathbb{P}^1,(1,1)))\cong PGL(4)$.

Furthermore, $\Aut(\overline{M}_{0,0}(\mathbb{P}^n,2))\cong PGL(n+1)$ for $n\geq 3$, $\Aut(\overline{M}_{0,0}(\mathbb{P}^2,2))\cong PGL(3)\rtimes S_2$, and $\Aut(\overline{M}_{0,0}(\mathbb{P}^1,2))\cong PGL(3)$. 

Finally,
$$
\Aut(\mmgu) \cong
\left\lbrace\begin{array}{ll}
S_2 \ltimes PGL(n+1) & \text{if n}>  3;\\ 
S_2 \ltimes (S_2 \ltimes PGL(n+1)) & \text{if n} = 3.
\end{array}\right.
$$
\end{thm}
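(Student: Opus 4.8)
The plan is to split the three kinds of assertions — the two isomorphisms, the double cover, and the automorphism computations — and to treat each through the modular reinterpretation of the wonderful models produced in Theorem \ref{A} and its symmetric analogue. For the isomorphisms I would work directly with the modular meaning of both sides. A stable map of bidegree $(1,1)$ to $\mathbb{P}^n\times\mathbb{P}^m$ is generically the graph of a projective isomorphism between a line of $\mathbb{P}^n$ and a line of $\mathbb{P}^m$, hence is the same datum as a pencil of bilinear forms, i.e. a point of $\sec_2(\mathcal{S}^{n,m})$; the reducible maps, whose domain is a nodal conic with one component of bidegree $(1,0)$ and the other $(0,1)$, correspond exactly to the rank-one locus $\mathcal{S}^{n,m}=\sec_1(\mathcal{S}^{n,m})$. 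I would build a morphism $\mathcal{C}(n,m,2)\to\mmnm$ by spreading out this correspondence over the universal family, using the universal property of the blow-up of $\sec_2(\mathcal{S}^{n,m})$ along $\mathcal{S}^{n,m}$ to separate the two boundary components, and then conclude that it is an isomorphism since it is a bijective and birational morphism between smooth varieties. The isomorphism $\sec_3^{(1)}(\mathcal{V}^n)\cong\mmn$ is obtained identically in the symmetric setting: a degree-two map $\mathbb{P}^1\to\mathbb{P}^n$ has as image a conic, which we encode as the rank-three quadric cone over it, so that smooth conics correspond to the rank-three points of $\sec_3(\mathcal{V}^n)$ and the single blow-up along the Veronese $\mathcal{V}^n$ (the rank-one locus) resolves the reducible conics into honest boundary maps.

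The double cover $\mmgu\to\sec_4^{(2)}(\mathcal{V}^n)$ I would read off from the geometry of rank-four quadrics. A general point of $\sec_4(\mathcal{V}^n)$ is the cone over a smooth quadric surface $Q\cong\mathbb{P}^1\times\mathbb{P}^1$ spanning a $\mathbb{P}^3\subset\mathbb{P}^n$; each of the two rulings of $Q$ is a one-parameter family of lines of $\mathbb{P}^n$, i.e. a degree-two rational curve in $\mathbb{G}(1,n)$, and conversely such a conic of lines sweeps out one ruling. Assigning to a stable map the quadric it sweeps out defines the morphism, and the two rulings of $Q$ provide the two preimages; after the blow-up $\sec_4^{(2)}$ resolving the lower-rank strata the map becomes finite of degree two, with the deck transformation exchanging the two rulings.

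For the automorphism groups the strategy is uniform: through the isomorphisms above each moduli space becomes a wonderful, hence Mori dream, compactification, so any automorphism acts on $N^1$ preserving $\Eff$ and $\Nef$, whose extremal rays are listed in Theorem \ref{A}. The rigid exceptional divisors $E_k$ are intrinsically distinguished, so an automorphism must permute them and therefore descend, through the canonical contractions, to an automorphism of the underlying secant or Grassmannian variety; by the classical description of the linear automorphisms of secant varieties of Segre and Veronese varieties this forces the connected part to be $PGL(n+1)\times PGL(m+1)$, respectively $PGL(n+1)$. The remaining finite symmetries are then identified geometrically. The transpose of matrices exchanges the two determinantal components $H_1^{\mathcal{C}},H_2^{\mathcal{C}}$ and yields the extra $S_2$ for $\mmnm$ precisely when $n=m\ge 2$, whereas for $n=m=1$ the rank-one locus is a divisor, the blow-up is trivial, $\mathcal{C}(1,1,2)=\mathbb{P}^3$, and the group jumps to the full $PGL(4)$. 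In the Veronese case the self-duality conic $\leftrightarrow$ dual conic produces the extra $S_2$ for $\mmn$ exactly when $n=2$ (the space of complete conics), $n=1$ coincides with $\mathbb{P}^2$ so that $\Aut\cong PGL(3)$, and for $n\ge 3$ no such involution survives. For the Grassmannian the ruling-exchanging deck transformation of the double cover supplies one $S_2$ for every $n>3$, and when $n=3$ the self-duality of $\mathbb{P}^3$ — the extra involution of the Klein quadric $\mathbb{G}(1,3)$ — contributes a second $S_2$, giving the iterated semidirect product.

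The main obstacle will be this last computation: one must rule out exotic automorphisms that fail to preserve the boundary, and carefully handle the exceptional low-dimensional coincidences ($n=m$, the small $\mathbb{P}^n$ with $n\le 2$, and $\mathbb{G}(1,3)$), where extra symmetries or degenerations of the blow-up enlarge the automorphism group beyond the generic answer.
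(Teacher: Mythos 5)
Your overall architecture (modular reinterpretation of the open orbits, boundary matching, Zariski's main theorem for the isomorphisms; the two-rulings picture for the degree-two cover; preservation of $\Eff$ and $\Nef$ plus descent to the secant varieties for the automorphisms) is the same as the paper's, and the automorphism part in particular tracks the proofs of Theorem \ref{autgr}, Corollary \ref{aut_M} and Proposition \ref{aut_MG} closely, including the low-dimensional exceptions. There are, however, two concrete gaps in the geometric constructions. The first: in both Veronese cases your map is not well-defined as stated. You encode a conic $C\subset\mathbb{P}^n$ as ``the rank-three quadric cone over it'', and a conic of lines as the quadric surface it sweeps out; but a point of $\sec_3(\mathcal{V}^n)$ (resp.\ $\sec_4(\mathcal{V}^n)$) is a quadric \emph{hypersurface} of $\mathbb{P}^n$, i.e.\ a cone with vertex a $\mathbb{P}^{n-3}$ (resp.\ $\mathbb{P}^{n-4}$), and there is no canonical choice of vertex over the plane (resp.\ the $\mathbb{P}^3$) spanned by $C$. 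The construction that actually works, and the one used in Propositions \ref{isoQ} and \ref{mapgr}, passes to the dual: $C$ is sent to $\bigcup_{p\in C}(T_pC)^{*}\subset\mathbb{P}^{n*}$, whose vertex is automatically the dual of the span of $C$. Without this dualization neither $\phi$ nor $\varphi$ exists.

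The second gap is that you build the isomorphisms in the hard direction. Constructing $\mathcal{C}(n,m,2)\to\mmnm$ by ``spreading out over the universal family'' requires exhibiting a flat family of stable maps over all of $\mathcal{C}(n,m,2)$, including the degenerate fibers over $E_1^{\mathcal{C}}$, and the universal property of the blow-up you invoke produces morphisms \emph{into} a blow-up, not out of it, so it does not supply this family. The paper goes the other way: the tangency divisor $\mathcal{T}$ is base-point free on the Kontsevich space by a theorem of Coskun--Harris--Starr, hence induces a morphism to the secant variety; this lifts to the blow-up because $\Delta$ is contracted onto the rank-one locus; one then checks the lift is finite and birational and applies Zariski's main theorem (where the target only needs to be normal --- the coarse space $\mmnm$ is not a priori smooth, only normal with quotient singularities, so ``bijective birational morphism between smooth varieties'' is not quite the right hypothesis). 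Likewise, for the degree-two cover the assertion that the map ``becomes finite of degree two after the blow-up'' is precisely what must be proved; the paper obtains it from the determinantal double cover $\mathcal{T}_4^n\to\sec_4(\mathcal{V}^n)$ of Hassett--Tschinkel together with the birational contraction of $\overline{M}_{0,0}(\mathbb{G}(1,n),2)$ onto $\mathcal{T}_4^n$, neither of which your sketch replaces with an argument.
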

 
The Mori theory of the spaces $\overline{M}_{0,n}(X,\beta)$, especially when the target variety is a projective space or a Grassmannian, has been widely investigated in a series of papers \cite{CS06}, \cite{Ch08}, \cite{CHS08}, \cite{CHS09}, \cite{CC10}, \cite{CC11}, \cite{CM17}. As an application of Theorem \ref{B} we recover some of these results in Propositions \ref{mcd_pn2}, \ref{isoC}, and Remark \ref{bir_n2}. In particular, Theorem \ref{B} gives an explicit description of the birational contraction of $\overline{M}_{0,0}(\mathbb{P}^n,2)$ in \cite[Theorem 1.2]{CHS09} as the blow-down $\sec_3^{(1)}(\mathcal{V}^n)\rightarrow \sec_3(\mathcal{V}^n)$.

\subsection*{Organization of the paper} Throughout the paper we work over an algebraically closed field $K$ of characteristic zero. In Section \ref{sec1}, we construct the spaces of complete singular collineations and quadrics, $\mathcal{C}(n,m,h)$ and $\mathcal{Q}(n,h)$. In Section \ref{pic}, we study their Picard rank, their effective and nef cones, and compute the Mori chamber decomposition of $\mathcal{C}(n,m,2)$ and $\mathcal{Q}(n,3)$. Finally, in Section \ref{sec4} we investigate the relation of the space of complete singular collineations and quadrics with Kontsevich moduli spaces of conics. 

\subsection*{Acknowledgments}
The first and the third named authors are members of the Gruppo Nazionale per le Strutture Algebriche, Geometriche e le loro Applicazioni of the Istituto Nazionale di Alta Matematica "F. Severi" (GNSAGA-INDAM). The first named author is supported by Fondo PRIN-MIUR  "Moduli Theory and Birational Classification" 2017. We thank the referee for many helpful comments that allowed us to improve the paper.

\section{Complete rank $h$ collineations}\label{sec1}
Let $V,W$ be $K$-vector spaces of dimension respectively $n+1$ and $m+1$ with $n\leq m$, and let $\mathbb{P}^N$ with $N = nm+n+m$ be the projective space parametrizing collineations from $V$ to $W$ that is non-zero linear maps $V\rightarrow W$ up to a scalar multiple. 

The line bundle $\mathcal{O}_{\mathbb{P}^n\times \mathbb{P}^m}(1,1)=\mathcal{O}_{\mathbb{P}(V)}(1)\boxtimes\mathcal{O}_{\mathbb{P}(W)}(1)$
induces an embedding
$$
\begin{array}{cccc}
\sigma:
&\mathbb{P}(V)\times\mathbb{P}(W)& \longrightarrow & \mathbb{P}(V\otimes W)
=\mathbb{P}^{N}\\
      & (\left[u\right],\left[v\right]) & \longmapsto & [u\otimes v].
\end{array}
$$ 
The image $\mathcal{S}^{n,m} = \sigma(\mathbb{P}^n\times \mathbb{P}^m) \subset \mathbb{P}^{N}$ is the \textit{Segre variety}. Let $[x_0,\dots, x_n],[y_0,\dots,y_m]$ be homogeneous coordinates respectively on $\mathbb{P}^n$ and $\mathbb{P}^m$. Then the morphism $\sigma$ can be written as
$$\sigma([x_0,\dots, x_n],[y_0,\dots,y_m]) = [x_0y_0:\dots:x_0y_m:x_1y_0:\dots :x_ny_m].$$
We will denote by $[z_{0,0}:\dots :z_{n,m}]$ the homogeneous coordinates on $\mathbb{P}^N$, where $z_{i,j}$ corresponds to the product $x_iy_j$.

A point $p\in \mathbb{P}^N = \mathbb{P}(\Hom(W,V))$ can be represented by an $(n+1)\times (m+1)$ matrix $Z$. The Segre variety $\mathcal{S}^{n,m}$ is the locus of rank one matrices. More generally, $p\in \sec_h(\mathcal{S}^{n,m})$ if and only if $Z$ can be written as a linear combination of $h$ rank one matrices that is if and only if $\rank(Z)\leq h$. If $p = [z_{0,0}:\cdots:z_{n,m}]$ then we may write
\stepcounter{thm}
\begin{equation}\label{matrix}
Z = \left(
\begin{array}{ccc}
z_{0,0} & \dots & z_{0,m}\\ 
\vdots & \ddots & \vdots\\ 
z_{n,0} & \dots & z_{n,m}
\end{array}\right).
\end{equation}
Therefore, the ideal of $\sec_h(\mathcal{S}^{n,m})$ is generated by the $(h+1)\times (h+1)$ minors of $Z$.

\subsection{Spherical and Wonderful varieties}
Let $X$ be a normal projective $\mathbb{Q}$-factorial variety. We denote by $N^1(X)$ the real vector space of $\mathbb{R}$-Cartier divisors modulo numerical equivalence. 
The \emph{nef cone} of $X$ is the closed convex cone $\Nef(X)\subset N^1(X)$ generated by classes of nef divisors. 

The stable base locus $\textbf{B}(D)$ of a $\mathbb{Q}$-divisor $D$ is the set-theoretic intersection of the base loci of the complete linear systems $|sD|$ for all positive integers $s$ such that $sD$ is integral
\stepcounter{thm}
\begin{equation}\label{sbl}
\textbf{B}(D) = \bigcap_{s > 0}B(sD).
\end{equation}
The \emph{movable cone} of $X$ is the convex cone $\Mov(X)\subset N^1(X)$ generated by classes of 
\emph{movable divisors}. These are Cartier divisors whose stable base locus has codimension at least two in $X$.
The \emph{effective cone} of $X$ is the convex cone $\Eff(X)\subset N^1(X)$ generated by classes of 
\emph{effective divisors}. We have inclusions $\Nef(X)\ \subset \ \overline{\Mov(X)}\ \subset \ \overline{\Eff(X)}$. We refer to \cite[Chapter 1]{De01} for a comprehensive treatment of these topics. 

\begin{Definition}
A \textit{spherical variety} is a normal variety $X$ together with an action of a connected reductive affine algebraic group $\mathscr{G}$, a Borel subgroup $\mathscr{B}\subset \mathscr{G}$, and a base point $x_0\in X$ such that the $\mathscr{B}$-orbit of $x_0$ in $X$ is a dense open subset of $X$. 

Let $(X,\mathscr{G},\mathscr{B},x_0)$ be a spherical variety. We distinguish two types of $\mathscr{B}$-invariant prime divisors: a \textit{boundary divisor} of $X$ is a $\mathscr{G}$-invariant prime divisor on $X$, a \textit{color} of $X$ is a $\mathscr{B}$-invariant prime divisor that is not $\mathscr{G}$-invariant. We will denote by $\mathcal{B}(X)$ and $\mathcal{C}(X)$ respectively the set of boundary divisors and colors of $X$.
\end{Definition}

\begin{Definition}
A \textit{wonderful variety} is a smooth projective variety $X$ with the action of a semi-simple simply connected group $\mathscr{G}$ such that:
\begin{itemize}
\item[-] there is a point $x_0\in X$ with open $\mathscr{G}$ orbit and such that the complement $X\setminus \mathscr{G}\cdot x_0$ is a union of prime divisors $E_1,\cdots, E_r$ having simple normal crossing;
\item[-] the closures of the $\mathscr{G}$-orbits in $X$ are the intersections $\bigcap_{i\in I}E_i$ where $I$ is a subset of $\{1,\dots, r\}$.
\end{itemize} 
\end{Definition}  

As proven by D. Luna in \cite{Lu96} wonderful varieties are in particular spherical. 

\subsection{Complete singular forms}
For $n = m$, let $\mathbb{P}^{N_{+}}\subset\mathbb{P}^N$ be the subspace of symmetric matrices. Then $\sec_h(\mathcal{S}^{n,m}) \cap \mathbb{P}^{N_{+}} = \sec_h(\mathcal{V})$ for any $h\geq 1$, where $\mathcal{V}^{n}\subset\mathbb{P}^{N_{+}}$ is the image of the degree two Veronese embedding of $\mathbb{P}^n$. 

\begin{Definition}\label{def1}
The space of \textit{complete rank $h$ collineations} is the variety $\mathcal{C}(n,m,h)$ obtained by blowing-up $\sec_h(\mathcal{S}^{n,m})$ along the strict transforms of the secant varieties $\sec_k(\mathcal{S}^{n,m})$ for $k< h$ in order of increasing dimension. When $n = m$ we will denote $\mathcal{C}(n,n,h)$ simply by $\mathcal{C}(n,h)$. Furthermore, we will denote by $E_1,\dots,E_{h-1}$ the exceptional divisors.

Similarly, for $n = m$ the space of \textit{complete rank $h$ quadrics} is the variety $\mathcal{Q}(n,h)$ obtained by blowing-up $\sec_h(\mathcal{V}^{n})$ along the strict transforms of the secant varieties $\sec_k(\mathcal{V}^{n})$ for $k< h$ in order of increasing dimension. We will denote by $E_1^{\mathcal{Q}},\dots,E_{h-1}^{\mathcal{Q}}$ its exceptional divisors.
\end{Definition}

\begin{Remark}
The case $\mathcal{C}(n,m,n+1)$ and $\mathcal{Q}(n,n+1)$ are respectively the space of complete collineations from $V$ to $W$ and the space of complete quadrics of $V$. By \cite[Theorem 1]{Va84} and \cite[Theorem 6.3]{Va82} they are wonderful varieties and their birational geometry has been studied in \cite{Ma18a}.
\end{Remark}

\begin{Notation}
For $k \le h$, we will denote by $\sec_h^{(k)}(\mathcal{S}^{n,m})$ the blow-up of $\sec_h(\mathcal{S}^{n,m})$ along the strict transforms of the secant varieties $\sec_i(\mathcal{S}^{n,m})$ for $i=1, \dots, k$, and by $\sec_h^{(k)}(\mathcal{V}^{n})$ the blow-up of $\sec_h(\mathcal{V}^{n})$ along the strict transforms of the secant varieties $\sec_i(\mathcal{V}^{n})$ for $i=1, \dots, k$.
\end{Notation}

Note that there is an embedding 
\stepcounter{thm}
\begin{equation}\label{emb}
i:\mathcal{Q}(n,h)\hookrightarrow\mathcal{C}(n,h).
\end{equation}

The following $(SL(n+1)\times SL(m+1))$-action
\stepcounter{thm} 
\begin{equation}\label{actcol}
\begin{array}{cccc}
(SL(n+1)\times SL(m+1))\times \mathbb{P}^N & \longrightarrow & \mathbb{P}^N\\
((A,B),Z) & \longmapsto & AZB^{t}
\end{array}
\end{equation}
induces an $(SL(n+1)\times SL(m+1))$-action on $\mathcal{C}(n,m,h)$. Similarly, when $n = m$ the $SL(n+1)$-action 
\stepcounter{thm}
\begin{equation}\label{actquad}
\begin{array}{cccc}
SL(n+1)\times \mathbb{P}^{N_{+}} & \longrightarrow & \mathbb{P}^{N_{+}}\\
(A,Z) & \longmapsto & AZA^{t}
\end{array}
\end{equation}
induces an $SL(n+1)$-action on $\mathcal{Q}(n,h)$.

\begin{Remark}\label{sec_ver}
Since $\sec_h(\mathcal{S}^{n,m})$ can be identified with the variety of $(n+1)\times (m+1)$ matrices modulo scalar of rank at most $h$, \cite[Example 12.1]{Ha95}, \cite[Proposition 12(a)]{HT84} give
$$\dim(\sec_h(\mathcal{S}^{n,m})) =h(m+n+2-h)-1,\quad \deg(\sec_h(\mathcal{S}^{n,m})) = \prod_{i=0}^{n-h}\frac{\binom{m+1+i}{n-i}}{\binom{m+1-h+i}{n-h-i}}.$$
Similarly, $\sec_h(\mathcal{V}^{n})$ identifies with the variety parametrizing $(n+1)\times (n+1)$ symmetric matrices modulo scalar of rank at most $h$ and
$$\dim(\sec_h(\mathcal{V}^{n})) = \frac{2nh-h^2+3h-2}{2}, \quad \deg(\sec_h(\mathcal{V}^n)) = \prod_{i=0}^{n-h}\frac{\binom{n+1+i}{n+1-h-i}}{\binom{2i+1}{i}}.$$
\end{Remark}

\begin{Proposition}\label{tcones}
The tangent cone of $\sec_h(\mathcal{S}^{n,m})$ at a point $p\in \sec_k(\mathcal{S}^{n,m})\setminus\sec_{k-1}(\mathcal{S}^{n,m})$ for $k\leq h$ is a cone with vertex of dimension $nm+n+m-(m+1-k)(n+1-k)$ over $\sec_{h-k}(\mathcal{S}^{n-k,m-k})$. 

The tangent cone of $\sec_h(\mathcal{V}^n)$ at a point $p\in \sec_k(\mathcal{V}^n)\setminus\sec_{k-1}(\mathcal{V}^n)$ for $k\leq h$ is a cone with vertex of dimension $\binom{n+2}{2}-1-\frac{(n-k+1)(n-k+2)}{2}$ over $\sec_{h-k}(\mathcal{V}^{n-k})$. 
\end{Proposition}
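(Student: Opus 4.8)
The plan is to use that the tangent cone is a local invariant, preserved by biregular automorphisms of an ambient open set, so that I may first normalise $p$ and then linearise the determinantal equations by a Schur complement. The action \eqref{actcol} of $SL(n+1)\times SL(m+1)$ on $\mathbb{P}^N$ is by automorphisms, preserves every secant stratum, and is transitive on $\sec_k(\mathcal{S}^{n,m})\setminus\sec_{k-1}(\mathcal{S}^{n,m})=\{[Z]:\rank Z=k\}$, since every rank $k$ matrix has the form $A\left(\begin{smallmatrix} I_k & 0\\ 0 & 0\end{smallmatrix}\right)B^{t}$ and scalars are absorbed projectively. Hence it is enough to compute the tangent cone at the class $p$ of $\left(\begin{smallmatrix} I_k & 0\\ 0 & 0\end{smallmatrix}\right)$, working with the affine cone $\widehat{\sec_h(\mathcal{S}^{n,m})}\subset\mathbb{A}^{(n+1)(m+1)}=\mathbb{A}^{N+1}$, the variety of matrices of rank at most $h$.

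Writing $Z=\left(\begin{smallmatrix} A & B\\ C & D\end{smallmatrix}\right)$ in block form with $A$ of size $k\times k$, the block $A$ is invertible on an open neighbourhood $U=\{\det A\neq 0\}$ of $p$. The Schur complement identity $\rank Z=k+\rank\bigl(D-CA^{-1}B\bigr)$ shows that $\Phi\colon(A,B,C,D)\mapsto(A,B,C,D-CA^{-1}B)$ is a biregular automorphism of $U$, with inverse $(A,B,C,D')\mapsto(A,B,C,D'+CA^{-1}B)$, carrying $\widehat{\sec_h(\mathcal{S}^{n,m})}\cap U$ isomorphically onto the open subset $\{\det A\neq 0\}$ of
$$\mathbb{A}^{k^2}\times\mathbb{A}^{k(m+1-k)}\times\mathbb{A}^{(n+1-k)k}\times\{D':\rank D'\le h-k\}$$
(the factors being the coordinates $A,B,C,D'$), and sending $p$ to $(I_k,0,0,0)$. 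The locus $\{D':\rank D'\le h-k\}$ is exactly the affine cone $\widehat{\sec_{h-k}(\mathcal{S}^{n-k,m-k})}$ over the determinantal variety of $(n+1-k)\times(m+1-k)$ matrices, with vertex $D'=0$.

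Because the tangent cone is preserved by the isomorphism $\Phi$ and commutes with products, because the tangent cone of a smooth point is its tangent space, and because the tangent cone of the cone $\widehat{\sec_{h-k}(\mathcal{S}^{n-k,m-k})}$ at its vertex is the cone itself, I obtain for the affine tangent cone (denoted $C_p$)
$$C_p\,\widehat{\sec_h(\mathcal{S}^{n,m})}\cong\mathbb{A}^{r}\times\widehat{\sec_{h-k}(\mathcal{S}^{n-k,m-k})},\qquad r=k^2+k(m+1-k)+(n+1-k)k=k(m+n+2-k).$$
By the standard relation between a projective cone and its affine cone, the embedded tangent cone of $\sec_h(\mathcal{S}^{n,m})$ at $[p]$ equals $\mathbb{P}\bigl(C_p\,\widehat{\sec_h(\mathcal{S}^{n,m})}\bigr)$, which is the cone over $\sec_{h-k}(\mathcal{S}^{n-k,m-k})$ with linear vertex $\mathbb{P}^{r-1}$. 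Since $(m+1-k)(n+1-k)=(N+1)-k(m+n+2-k)$, this vertex has dimension $r-1=nm+n+m-(m+1-k)(n+1-k)$, as asserted. For $\sec_h(\mathcal{V}^n)$ I would run the identical argument, using the congruence action \eqref{actquad} of $SL(n+1)$, transitive on rank $k$ symmetric matrices up to scalar, and the symmetric Schur complement $D\mapsto D-B^{t}A^{-1}B$ for $Z=\left(\begin{smallmatrix} A & B\\ B^{t} & D\end{smallmatrix}\right)$ with $A$ symmetric invertible; the non-linear factor becomes $\widehat{\sec_{h-k}(\mathcal{V}^{n-k})}$ and the linear part has dimension $r'=\binom{k+1}{2}+k(n+1-k)=\binom{n+2}{2}-\binom{n-k+2}{2}$, giving the vertex dimension $r'-1=\binom{n+2}{2}-1-\tfrac{(n-k+1)(n-k+2)}{2}$. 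As a check, in both cases the vertex dimension matches $\dim\sec_k$ from Remark \ref{sec_ver}.

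The step I expect to be most delicate is verifying that $\Phi$ linearises the determinantal equations globally on $U$, namely that the rank stratification transforms exactly as the Schur complement identity dictates and that $\Phi$ is biregular rather than a merely formal or birational identification, and that the tangent-cone functor honours the resulting product decomposition. A secondary subtlety is the passage from the affine tangent cone of $\widehat{\sec_h(\mathcal{S}^{n,m})}$ to the embedded projective tangent cone: one must verify that the scaling direction of $p$ lies in the linear factor $\mathbb{A}^{r}$, so that $[p]$ indeed belongs to the vertex $\mathbb{P}^{r-1}$ of the resulting cone.
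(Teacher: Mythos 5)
Your proof is correct, but it takes a genuinely different route from the paper's. The paper also normalises $p$ to the block matrix $\left(\begin{smallmatrix} I_k & 0\\ 0 & 0\end{smallmatrix}\right)$, but then works directly with the defining equations: after a translation in the affine chart $z_{0,0}\neq 0$ it observes that the lowest-degree parts of the $(h+1)\times(h+1)$ minors of $Z$ are exactly the $(h+1-k)\times(h+1-k)$ minors of the bottom-right $(n+1-k)\times(m+1-k)$ block. This only yields the \emph{inclusion} of the tangent cone in the cone $C$ over $\sec_{h-k}(\mathcal{S}^{n-k,m-k})$ with the stated linear vertex (initial forms of generators need not generate the initial ideal), and equality is then forced by comparing dimensions via Remark \ref{sec_ver}, using that $C$ is irreducible of dimension $\dim\sec_h(\mathcal{S}^{n,m})$. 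You instead linearise the equations exactly by the Schur complement automorphism $\Phi$, obtaining a Zariski-local product decomposition of $\widehat{\sec_h(\mathcal{S}^{n,m})}$ near $\hat p$; this makes the containment an equality for free, avoids the dimension count entirely, and in fact proves the stronger statement that the variety itself (not only its tangent cone) is locally a product of an affine space with the smaller determinantal cone --- a structure the paper only extracts at the level of tangent cones and then re-derives fibrewise in the proof of Theorem \ref{comp}. Both of the subtleties you flag are genuine but unproblematic: $\Phi$ is biregular on $\{\det A\neq 0\}$ since the entries of $CA^{-1}B$ are regular there, and for the identification of the \emph{embedded} tangent cone in the original coordinates it is worth noting explicitly that $d\Phi_{\hat p}=\mathrm{id}$ because the correction term $CA^{-1}B$ is quadratic in the blocks $B,C$, which vanish at $\hat p=(I_k,0,0,0)$; likewise the scaling direction $\hat p$ lies in the $\mathbb{A}^{k^2}$ factor, so $[p]$ sits in the vertex $\mathbb{P}^{r-1}$ as required, and $\mathbb{P}(C_{\hat p}\widehat{\sec_h(\mathcal{S}^{n,m})})$ is indeed the embedded projective tangent cone the paper describes.
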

\begin{proof}
We compute the tangent cones of $\sec_h(\mathcal{S}^{n,m})$. The symmetric case can be worked out similarly. It is enough to compute the tangent cone of $\sec_h(\mathcal{S}^{n,m})$ at 
$$
p_k = \left(
\begin{array}{cc}
I_{k,k} & 0_{k,m+1-k} \\ 
0_{n+1-k,k} & 0_{n+1-k,m+1-k}
\end{array}
\right)
$$ 
where $I_{k,k}$ is the $k\times k$ identity matrix. Consider the affine chart $z_{0,0}\neq 0$ and the change of coordinates $z_{i,i}\mapsto z_{i,i}-1$ for $i = 1,\dots,k-1$, $z_{i,j}\mapsto z_{i,j}$ otherwise. Then the matrix $Z$ in (\ref{matrix}) takes the following form
$$
\left(
\begin{array}{ccccccc}
1 & z_{0,1} & \hdots & z_{0,k-1} & z_{0,k} & \hdots & z_{0,m} \\ 
z_{1,0} & z_{1,1}-1 & \hdots & z_{1,k-1} & z_{1,k} & \hdots & z_{1,m} \\ 
\vdots & \vdots & \ddots & \vdots & \vdots & \ddots & \vdots \\ 
z_{k-1,0} & z_{k-1,1} & \hdots & z_{k-1,k-1}-1 & z_{k-1,k} & \hdots & z_{k-1,m} \\ 
z_{k,0} & z_{k,1} & \hdots & z_{k,k-1} & z_{k,k} & \hdots & z_{k,m} \\ 
\vdots & \vdots & \ddots & \vdots & \vdots & \ddots & \vdots \\ 
z_{n,0} & z_{n,1} & \hdots & z_{n,k-1} & z_{n,k} & \hdots & z_{n,m}
\end{array} 
\right).
$$
Recall that $\sec_h(\mathcal{S}^{n,m})\subseteq\mathbb{P}^N$ is cut out by the $(h+1)\times (h+1)$ minors of $Z$. Now, the lowest degree terms of these minors are given by the $(h+1-k)\times (h+1-k)$ minors of the following matrix
$$
\left(
\begin{array}{ccc}
z_{k,k} & \hdots & z_{k,m}\\ 
\vdots & \ddots & \vdots \\ 
z_{n,k} & \hdots & z_{n,m}
\end{array} 
\right).
$$
Therefore, the tangent cone $TC_{p_k}\sec_h(\mathcal{S}^{n,m})$ is contained in the cone $C$ over $\sec_{h-k}(\mathcal{S}^{n-k,m-k})$ with vertex the linear subspace of $\mathbb{P}^N$ given by $\{z_{k,k} =\dots = z_{k,m} = z_{k+1,k} = \dots = z_{k+1,m} = \dots = z_{n,k}= \dots =z_{n,m}=0\}$. Finally, by Remark \ref{sec_ver} we conclude that $TC_{p_k}\sec_h(\mathcal{S}^{n,m}) = C$. 
\end{proof}

We will need the following result on fibrations with smooth fibers on a smooth base.

\begin{Proposition}\label{smooth_fib}
Let $f:X\rightarrow Y$ be a surjective morphism of varieties over an algebraically closed field with equidimensional smooth fibers. If $Y$ is smooth then $X$ is smooth as well. 
\end{Proposition}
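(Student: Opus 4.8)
The plan is to verify regularity of the local ring $\mathcal{O}_{X,x}$ at every closed point $x\in X$, which suffices for smoothness since we work over an algebraically closed field. Write $y=f(x)$, set $e=\dim Y$ and let $d$ be the common dimension of the fibers. Because smoothness is local, in the situations of interest ($X$ being the source of a resolution) I would first reduce to the case in which $X$ is irreducible; then $f$ is dominant onto the irreducible base $Y$. The key will be to pin down $\dim_x X$ and $\dim_K T_x X$ and show they agree, using that $\dim_x X\le \dim_K T_x X$ always, with equality exactly at smooth points.

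The core of the argument compares the Zariski tangent space of $X$ at $x$ with those of the fiber and the base. The scheme-theoretic fiber $X_y=f^{-1}(y)$ sits in the left-exact sequence of tangent spaces
\[
0\longrightarrow T_x X_y\longrightarrow T_x X\xrightarrow{\ df_x\ } T_y Y,
\]
since $T_xX_y=\ker(df_x)$. Hence $\dim_K T_x X=\dim_K T_x X_y+\dim_K\operatorname{im}(df_x)\le \dim_K T_x X_y+\dim_K T_y Y$. As the fibers are smooth of dimension $d$ we have $\dim_K T_x X_y=d$, and as $Y$ is smooth we have $\dim_K T_y Y=e$; therefore $\dim_K T_x X\le d+e$.

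Next I would bound $\dim_x X$ from below by a dimension count coming from surjectivity. Since $f$ is surjective and $Y$ is irreducible, $f$ is dominant, so the general fiber has dimension $\dim X-\dim Y=\dim X-e$; but every fiber has dimension $d$, which forces $\dim X=d+e$, and thus $\dim_x X=d+e$ because $X$ is irreducible. Combining this with the elementary inequality $\dim_x X\le \dim_K T_x X$ yields
\[
d+e=\dim_x X\le \dim_K T_x X\le d+e,
\]
so that $\dim_x X=\dim_K T_x X$ and $\mathcal{O}_{X,x}$ is regular. As $x$ was arbitrary, $X$ is smooth.

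The step I expect to be the main obstacle is the dimension count, namely the lower bound $\dim_x X\ge d+e$: the two upper bounds on the tangent space and on $\dim_x X$ are formal, but it is the equality $\dim X=d+e$ where surjectivity is indispensable, since an upper bound $\dim_x X\le d+e$ alone can never force regularity. Here the \emph{equidimensionality} of the fibers is essential, as it guarantees that the general fiber, whose dimension computes $\dim X-\dim Y$, has the same dimension $d$ as the fiber through $x$; without this, one controls only the generic behavior and the conclusion may fail at special points.
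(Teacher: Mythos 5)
Your argument is correct, but it takes a genuinely different route from the paper's. The paper's proof is a two-line citation: it first invokes a flatness criterion to conclude that $f$ is flat, and then applies Mumford's characterization of smooth morphisms (flat with smooth equidimensional fibers implies smooth, and a smooth morphism onto a smooth base has smooth source). You bypass flatness entirely and argue pointwise: the identification $T_xX_y=\ker(df_x)$ of the tangent space of the scheme-theoretic fiber gives the upper bound $\dim_K T_xX\le d+e$, surjectivity together with equidimensionality of the fibers pins down $\dim X=d+e$ via the fiber-dimension theorem, and the two bounds squeeze $\dim_x X=\dim_K T_xX$, i.e.\ regularity of $\mathcal{O}_{X,x}$. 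Your proof is more elementary and self-contained --- in particular it does not pass through any Cohen--Macaulay hypothesis of the kind that usually underlies ``miracle flatness'' statements --- at the cost of not producing the flatness of $f$, which the paper does not use further anyway. Two points you should make explicit rather than leave implicit: the fibers must be taken scheme-theoretically (otherwise $T_xX_y=\ker(df_x)$ and the equality $\dim_K T_xX_y=d$ can fail, as for $t\mapsto t^2$ on $\mathbb{A}^1$), and the reduction to irreducible $X$ is harmless here because ``variety'' is irreducible in this paper, which is what guarantees $\dim_x X=\dim X$ at every closed point.
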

\begin{proof}
By \cite[Theorem 3.3.27]{Sch99} the morphism $f:X\rightarrow Y$ is flat. Finally, since all the fibers of $f:X\rightarrow Y$ are smooth and of the same dimension \cite[Theorem 3', Chapter III, Section 10]{Mum99} yields that $X$ is smooth. 
\end{proof}

\begin{thm}\label{comp}
The variety $\mathcal{C}(n,m,h)$ is smooth and the divisors $E_1,\dots,E_{h-1}$ are smooth and intersect transversally. The closures of the orbits of the $SL(n+1) \times SL(m+1)$-action on $\mathcal{C}(n,m,h)$ induced by (\ref{actcol}) are given by all the possible intersections of $E_1,\dots,E_{h-1}$ and $\mathcal{C}(n,m,h)$. Furthermore, the analogous statements hold for $\mathcal{Q}(n,h)$. Hence $\mathcal{C}(n,m,h)$ and $\mathcal{Q}(n,h)$ are wonderful.
\end{thm}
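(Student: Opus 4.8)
The plan is to argue by induction on $h$, using the local product structure of $\sec_h(\mathcal{S}^{n,m})$ along its rank strata that is encoded in Proposition \ref{tcones}. The base case $h=1$ is immediate, since $\mathcal{C}(n,m,1)=\mathcal{S}^{n,m}=\mathbb{P}^n\times\mathbb{P}^m$ is smooth, homogeneous under $G:=SL(n+1)\times SL(m+1)$, and has empty boundary, hence is trivially wonderful. Note that the centers $\sec_k(\mathcal{S}^{n,m})$ for $k<h$ have strictly increasing dimension $k(m+n+2-k)-1$, so "increasing dimension" means we blow up $\sec_1$ first and $\sec_{h-1}$ last; each center is $G$-invariant, so the action of (\ref{actcol}) lifts to $\mathcal{C}(n,m,h)$.

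First I would record the local geometry. Because the $G$-action is transitive on the rank-$k$ locus $\sec_k(\mathcal{S}^{n,m})\setminus\sec_{k-1}(\mathcal{S}^{n,m})$, it suffices to work at the standard point $p_k$ of the proof of Proposition \ref{tcones}. There the tangent cone is the cone, with linear vertex of the stated dimension, over $\sec_{h-k}(\mathcal{S}^{n-k,m-k})$; reading off the explicit chart in that proof one upgrades this to an (\'etale- or analytic-) local isomorphism
$$\sec_h(\mathcal{S}^{n,m})\;\cong\;\mathbb{A}^d\times\Cone\big(\sec_{h-k}(\mathcal{S}^{n-k,m-k})\big)$$
near $p_k$, under which the stratum $\sec_j(\mathcal{S}^{n,m})$ matches $\mathbb{A}^d\times\Cone(\sec_{j-k}(\mathcal{S}^{n-k,m-k}))$. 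In particular $\Sing(\sec_h(\mathcal{S}^{n,m}))=\sec_{h-1}(\mathcal{S}^{n,m})$ and the rank stratification coincides with the $G$-orbit stratification, the open orbit being the rank-exactly-$h$ locus.

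Next I would run the blow-up sequence against this model. Blowing up $\sec_1,\dots,\sec_{h-1}$ in increasing order corresponds, in the chart near $p_k$, to performing the blow-ups defining $\mathcal{C}(n-k,m-k,h-k)$ fiberwise over the smooth factor $\mathbb{A}^d$ (the deeper strata $\sec_{<k}$ having already been resolved). By the inductive hypothesis $\mathcal{C}(n-k,m-k,h-k)$ is smooth with smooth boundary divisors meeting transversally, so over $\sec_k\setminus\sec_{k-1}$ the exceptional locus is a fibration whose fibers are the smooth boundary pieces of $\mathcal{C}(n-k,m-k,h-k)$; Proposition \ref{smooth_fib} then gives smoothness of $\mathcal{C}(n,m,h)$ and of each $E_i$, while the product form of the local model yields the simple normal crossing property and transversality of $E_1,\dots,E_{h-1}$. \emph{The main obstacle is precisely making this reduction rigorous}: one must verify that the local product decomposition is compatible with the entire blow-up tower, i.e.\ that at every intermediate stage the strict transform of the next center $\sec_k$ corresponds, under the slice, to the next center of the lower-dimensional problem, so that the induction can be fed through. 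This bookkeeping of strict transforms across the slice is the technical heart of the argument.

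Finally, for the orbit statement I would observe that the preimage of the open orbit $\sec_h\setminus\sec_{h-1}$ is an open dense $G$-orbit whose complement is $E_1\cup\dots\cup E_{h-1}$; using the local model and induction, the $G$-orbits meeting $E_k$ are identified with the orbits of $\mathcal{C}(n-k,m-k,h-k)$, which shows that every orbit closure equals some intersection $\bigcap_{i\in I}E_i$ and conversely. The symmetric case $\mathcal{Q}(n,h)$ is handled identically, replacing Proposition \ref{tcones} by its symmetric half and $G$ by $SL(n+1)$ acting via (\ref{actquad}). Since $SL(n+1)\times SL(m+1)$ and $SL(n+1)$ are semisimple and simply connected, the smoothness, the simple normal crossing boundary, and the orbit-closure description are exactly the two bullet conditions in the definition of wonderful variety, whence $\mathcal{C}(n,m,h)$ and $\mathcal{Q}(n,h)$ are wonderful.
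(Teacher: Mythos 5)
Your proposal is correct and follows essentially the same route as the paper: induction on $h$, with Proposition \ref{tcones} supplying the local structure along the rank strata and Proposition \ref{smooth_fib} upgrading the resulting fibrations $E_i\rightarrow\mathcal{C}(n,m,i)$ with fibers $\mathcal{C}(n-i,m-i,h-i)$ to smoothness statements; the "bookkeeping of strict transforms across the slice" that you flag as the technical heart is exactly the step the paper carries out (somewhat informally, via the worked example $n=m=3$ and the phrase "arguing in the same way"). The only notable divergences are cosmetic: the paper establishes transversality by a dimension count on the intersections $E_{j_1}\cap\dots\cap E_{j_t}$ rather than by invoking the local product form, and it disposes of the orbit-closure statement by citing Vainsencher's theorem for complete collineations instead of rederiving it from the local model as you do.
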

\begin{proof}
We will proceed as follows. For $h = 1$ we will prove the statement for any $n$ and $m$. Then we will prove that if for $h<j$ the statement holds for any $n$ and $m$ then it also holds for $h = j$ and any $n$ and $m$. This will prove the statement for any $n$,$m$ and $h = 0,\dots, n+1$. 

For $h = 1$ we have $\mathcal{C}(n,m,1) = \mathcal{S}^{n,m}$. Hence, the statements holds for any $n$ and $m$. Assume that for any $h < j$ the statement holds for any $n$ and $m$ and consider $\mathcal{C}(n,m,j)$.

In order to understand the geometry of our construction it is more useful to focus on a specific case. For instance take $n = m = 3$. We have $\mathcal{S}^{3,3}\subset \sec_2(\mathcal{S}^{3,3})\subset \sec_3(\mathcal{S}^{3,3})\subset\mathbb{P}^{15}$. Let $X_1$ be the blow-up of $\mathbb{P}^{15}$ along $\mathcal{S}^{3,3}$ with exceptional divisor $\overline{E}_1$. Then $\overline{E}_1$ is a $\mathbb{P}^{8}$-bundle over $\mathcal{S}^{3,3}$. The strict transform $\sec_2^{(1)}(\mathcal{S}^{3,3})$ intersects the fiber $\overline{E}_{1,p}$ of $\overline{E}_1$ over a point $p \in \mathcal{S}^{3,3}$ along the base of the tangent cone of $\sec_2(\mathcal{S}^{3,3})$ at $p$ which by Proposition \ref{tcones} is $\mathcal{S}^{2,2}$. Similarly, $\sec_3(\mathcal{S}^{3,3})$ intersects $\overline{E}_{1,p}$ along $\sec_2(\mathcal{S}^{2,2})$. Hence, the fibers of $E_1\rightarrow\mathcal{S}^{3,3}$ are secant varieties $\sec_2(\mathcal{S}^{2,2})$. Now, let $X_2$ be the blow-up of $X_1$ along $\sec_2^{(1)}(\mathcal{S}^{3,3})$ with exceptional divisor $\overline{E}_2$. Then $\overline{E}_2\rightarrow\sec_2^{(1)}(\mathcal{S}^{3,3})$ is a $\mathbb{P}^3$-bundle. Fix a point $p\in \sec_2^{(1)}(\mathcal{S}^{3,3})\setminus (E_1\cap \sec_2^{(1)}(\mathcal{S}^{3,3}))$. By Proposition \ref{tcones} $\sec_3^{(2)}(\mathcal{S}^{3,3})$ intersects $\overline{E}_{2,p}$ along $\mathcal{S}^{1,1}$. If $p\in \sec_2^{(1)}(\mathcal{S}^{3,3})\cap E_1$ then the projective tangent cone of $\sec_3^{(1)}(\mathcal{S}^{3,3})$ at $p$ coincides with the projective tangent cone of $\sec_3^{(1)}(\mathcal{S}^{3,3})\cap E_{1,p} = \sec_2(\mathcal{S}^{2,2})$ at $p\in\sec_2^{(1)}(\mathcal{S}^{3,3})\cap E_{1,p} = \mathcal{S}^{2,2}$ which in turn by Proposition \ref{tcones} is $\mathcal{S}^{1,1}$. Hence, the fibers of $E_2\rightarrow\sec_2^{(1)}(\mathcal{S}^{3,3})$ are isomorphic to $\mathcal{S}^{1,1}$. Summing up after the two blow-ups the fibers of $E_{1}\rightarrow\mathcal{S}^{3,3}$ are isomorphic to $\mathcal{C}(2,2,2)$, that is the blow-up of $\sec_2(\mathcal{S}^{2,2})$ along $\mathcal{S}^{2,2}$, and the fibers of $E_2\rightarrow\sec_2^{(1)}(\mathcal{S}^{3,3})$ are isomorphic to $\mathcal{C}(1,1,1)$ that is $\mathcal{S}^{1,1}$.   

Arguing in the same way we see that for any $i=1, \dots, j-1$, Proposition \ref{tcones} gives a fibration $E_i \rightarrow \sec_i^{(i-1)}(\mathcal{S}^{n,m})=\mathcal{C}(n,m,i)$ whose fibers are isomorphic to $\sec_{j-i}^{(j-i-1)}(\mathcal{S}^{n-i,m-i}) = \mathcal{C}(n-i,m-i,j-i)$. Then, by the induction hypothesis and Proposition \ref{smooth_fib} the exceptional divisors $E_1, \dots, E_{j-1}$ in $\mathcal{C}(n,m,j)$ are smooth.  Moreover, by Proposition \ref{tcones}, $\mathcal{C}(n,m,j)$ is smooth away from $E_1, \dots, E_{j-1}$ and for $i=1, \dots, j-1$ there is a fibration $\mathcal{C}(n,m,j)\cap E_i\rightarrow \mathcal{C}(n,m,i)$ whose fibers are isomorphic to $\mathcal{C}(n-i,m-i,j-i)$. Hence, by induction and Proposition \ref{smooth_fib} we get that $\mathcal{C}(n,m,j)\cap E_i$ is smooth and 
$$\dim(\mathcal{C}(n,m,j)\cap E_i) =i(n+m-i)-1+(j-i)(n-i+m-i-j+i)-1 = \dim(\mathcal{C}(n,m,j))-1.$$ 
So $\mathcal{C}(n,m,j)$ is smooth and the intersection $\mathcal{C}(n,m,j)\cap E_i$ is transversal for any $i=1, \dots, j-1$.

Now, consider an intersection of the following form $ E_{j_1}\cap\dots \cap E_{j_t}$. By Proposition \ref{tcones} the restriction of the blow-down morphism
$$E_{j_1}\cap\dots \cap E_{j_t}\rightarrow E_{j_1}\cap \dots \cap E_{j_{t-1}}\cap \mathcal{C}(n,m,j_t)$$
has fibers isomorphic to $\mathcal{C}(n-j_t,m-j_t,j-j_t)$. Again by the induction hypothesis and Proposition \ref{smooth_fib} $E_{j_1}\cap\dots \cap E_{j_t}$ is smooth of dimension 
$$(j-j_t)(n-j+m-j-j+j_t)-1+j_t(n+m-j_t)-1-(t-1) = \dim(\mathcal{C}(n,m,j))-t$$
and hence the intersection is transversal.

The claim about the orbit closures follows from \cite[Theorem 1]{Va84} and the fact that the $SL(n+1) \times SL(m+1)$ action on $\mathcal{C}(n,m,h)$ is given by the restriction of the action (\ref{actcol}) on the space of complete collineations. With an analogous proof we get the result for $\mathcal{Q}(n,h)$.
\end{proof}

\section{Divisors on $\mathcal{C}(n,m,h)$ and $\mathcal{Q}(n,h)$}\label{pic}
In the section we study the Picard groups and the cones of effective and nef divisors of the wonderful varieties introduces in Section \ref{sec1}. We will denote by $\mathcal{C}(n,m,h)^{o}$ and $\mathcal{Q}(n,h)^{o}$ the orbits of the matrix 
\stepcounter{thm}
\begin{equation}\label{matI}
J_h = \left(\begin{array}{cc}
I_{h,h} & 0 \\ 
0 & 0
\end{array} 
\right)
\end{equation}
where $I_{h,h}$ is the $h\times h$ identity matrix, under the actions (\ref{actcol}) and (\ref{actquad}) respectively.

\begin{Proposition}\label{picg}
The Picard groups of $\mathcal{C}(n,m,h)^{o}$ and $\mathcal{Q}(n,h)^{o}$ are given by
$$
\Pic(\mathcal{C}(n,m,h)^{o})\cong
\left\lbrace
\begin{array}{ll}
\mathbb{Z} & \text{if } h = n+1< m+1;\\ 
\mathbb{Z}\oplus\mathbb{Z} & \text{if } h<n+1;\\
\frac{\mathbb{Z}}{(n+1)\mathbb{Z}} & \text{if } h = n+1 = m+1;
\end{array}\right.
$$
and
$$
\Pic(\mathcal{Q}(n,h)^{o})\cong
\left\lbrace
\begin{array}{ll}
\mathbb{Z} & \text{if } h < n+1 \text{ is odd};\\ 
\frac{\mathbb{Z}}{2\mathbb{Z}}\oplus\mathbb{Z} & \text{if } h < n+1 \text{ is even};\\
\frac{\mathbb{Z}}{(n+1)\mathbb{Z}} & \text{if } h = n+1.
\end{array}\right.
$$
\end{Proposition}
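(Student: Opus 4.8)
The plan is to realize each open orbit as a homogeneous space $G/H$ and to compute its Picard group through the character group $\mathfrak{X}(H)$ of the stabilizer. Since the blow-down $\mathcal{C}(n,m,h)\to\sec_h(\mathcal{S}^{n,m})$ is an isomorphism over the locus of matrices of rank exactly $h$ (all blow-up centers being lower secant varieties, i.e. strictly lower rank loci), the open orbit $\mathcal{C}(n,m,h)^{o}$ is equivariantly isomorphic to the orbit of $[J_h]$ under $G=SL(n+1)\times SL(m+1)$; hence $\mathcal{C}(n,m,h)^{o}\cong G/H$ with $H=\Stab_G([J_h])$, and likewise $\mathcal{Q}(n,h)^{o}\cong G'/H'$ with $G'=SL(n+1)$ and $H'=\Stab_{G'}([J_h])$ for the action (\ref{actquad}). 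Because $G$ and $G'$ are semisimple and simply connected we have $\mathfrak{X}(G)=0$ and $\Pic(G)=0$, so the standard exact sequence $\mathfrak{X}(G)\to\mathfrak{X}(H)\to\Pic(G/H)\to\Pic(G)$ for the Picard group of a homogeneous space collapses to an isomorphism $\Pic(G/H)\cong\mathfrak{X}(H)$. The whole problem thus reduces to computing character groups of stabilizers.

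I would then determine the stabilizers explicitly. Writing $A,B$ in block form for the splittings $h+(n+1-h)$ and $h+(m+1-h)$, the condition $AJ_hB^{t}=\lambda J_h$ forces $A,B$ to be block upper triangular with invertible top-left blocks linked by $B_{11}=\lambda(A_{11}^{t})^{-1}$; the analogous computation for (\ref{actquad}) forces $A$ block upper triangular with $A_{11}A_{11}^{t}=\lambda I_h$, i.e. $A_{11}$ in the orthogonal similitude group $GO_h=(\mathbb{G}_m\times O_h)/\mu_2$. In both cases the off-diagonal blocks form the unipotent radical $U$, which carries no characters, so $\mathfrak{X}(H)=\mathfrak{X}(L)$ where $L$ is the Levi of block-diagonal elements satisfying $\det A=\det B=1$.

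It remains to compute $\mathfrak{X}(L)$ case by case. For collineations with $h<n+1$ (both complementary blocks $A_{22},B_{22}$ present), characters factor through the four block determinants and $\lambda$, and the relations $\det A_{11}\det A_{22}=1$, $\det B_{11}\det B_{22}=1$, $\det B_{11}=\lambda^{h}(\det A_{11})^{-1}$ cut out a rank-two torus whose character lattice is saturated, giving $\mathbb{Z}\oplus\mathbb{Z}$; when $h=n+1<m+1$ the block $A_{22}$ disappears, killing one generator and leaving $\mathbb{Z}$; when $h=n+1=m+1$ both complementary blocks disappear, $\det B=1$ forces $\lambda^{n+1}=1$, and one identifies $H\cong SL(n+1)\times\mu_{n+1}$, whence $\mathfrak{X}(H)\cong\mathbb{Z}/(n+1)\mathbb{Z}$. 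For quadrics the same scheme applies with top-left factor $GO_h$, whose character group is $\mathbb{Z}\langle\mu\rangle\oplus(\mathbb{Z}/2)\langle\nu\rangle$ for $h$ even and free of rank one for $h$ odd, the order-two class $\nu=(\det A_{11})\lambda^{-h/2}$ being the source of the torsion. Combining with the remaining determinant $\det A_{22}$ under $\det A_{11}\det A_{22}=1$ yields $\mathbb{Z}$ for $h$ odd and $\mathbb{Z}/2\oplus\mathbb{Z}$ for $h$ even; the degenerate case $h=n+1$, where $A_{22}$ vanishes and $\lambda^{n+1}=1$, gives $\mathbb{Z}/(n+1)\mathbb{Z}$ after accounting for the $\mu_2$-quotient.

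The main obstacle is the careful bookkeeping of the orthogonal similitude group $GO_h$ in the quadric case: one must correctly handle its two connected components, equivalently the presentation $(\mathbb{G}_m\times O_h)/\mu_2$, to see that the parity of $h$ governs whether the class $\nu$ with $\nu^2=1$ survives, which is exactly what produces the $\mathbb{Z}/2$ summand. The degenerate cases $h=n+1$, where the complementary blocks vanish and the similitude factor is constrained to $(n+1)$-th roots of unity, require a parallel but separate argument and account for the cyclic torsion $\mathbb{Z}/(n+1)\mathbb{Z}$. Throughout one uses $n\leq m$, so that $h<n+1$ automatically entails $h<m+1$ and the case list is exhaustive.
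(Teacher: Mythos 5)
Your proposal is correct and follows essentially the same route as the paper: both reduce $\Pic$ of the open orbit to the character group of the stabilizer of $[J_h]$ (using triviality of the Picard and character groups of the simply connected semisimple group), compute the stabilizer in block form as a parabolic-type subgroup, and extract the character lattice from the determinant characters and the similitude character $\lambda$, with the parity of $h$ in the symmetric case governing the $\mathbb{Z}/2\mathbb{Z}$ torsion via the relation $2\det(A_{h,h})=h\lambda$. Your packaging of the quadric case through $GO_h\cong(\mathbb{G}_m\times O_h)/\mu_2$ is only a cosmetic variant of the paper's direct lattice computation.
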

\begin{proof}
Let $G_h$ be the stabilizer of the matrix $J_h$ in (\ref{matI}) under the action (\ref{actcol}). Since the Picard group and the character group of $SL(n+1)\times SL(m+1)$ are trivial \cite[Theorem 4.5.1.2]{ADHL15} yields that $\Pic(\mathcal{C}(n,m,h)^{o})$ is isomorphic to the character group $\mathbb{X}(G_h)$ of $G_h$. Write an element $(A,B)\in SL(n+1)\times SL(m+1)$ as 
\stepcounter{thm}
\begin{equation}\label{matdiv}
A = \left(\begin{array}{cc}
A_{h,h} & A_{h,n+1-h}\\ 
A_{n+1-h,h} & A_{n+1-h,n+1-h}
\end{array}\right),
\quad
B = \left(\begin{array}{cc}
B_{h,h} & B_{h,m+1-h}\\ 
B_{m+1-h,h} & B_{m+1-h,m+1-h}
\end{array}\right). 
\end{equation}
Then $(A,B)\in G_h$ if and only if $A_{n+1-h,h} = 0$, $B_{m+1-h,h} = 0$ and $A_{h,h}B_{h,h}^{T} = \lambda I_{h,h}$. Assume that $h < n+1$ and $h < m+1$. Then $\mathbb{X}(G_h)$ is generated by the characters 
$$d_{A_h} := \det(A_{h,h}), d_{B_h} := \det(B_{h,h}), d_{A_{n+1-h}} := \det(A_{n+1-h,n+1-h}), d_{B_{m+1-h}} := \det(B_{m+1-h,m+1-h}), \lambda$$ 
with the following relations
$$d_{A_h} + d_{A_{n+1-h}} = d_{B_h}+d_{B_{m+1-h}} =0, d_{A_h}+d_{B_h} = h\lambda.$$
Hence, $\mathbb{X}(G_h)$ is the free abelian group generated by $d_{A_h}$ and $\lambda$.

Now, assume that $h = n+1< m+1$. Then $d_{A_{n+1-h}} = 0$ and so $d_{A_h} = 0$. Therefore, $\mathbb{X}(G_h)$ is the free abelian group generated by $\lambda$. 

If $h = n+1 = m+1$ then $d_{A_{n+1-h}} = d_{B_{m+1-h}} = 0$. So $d_{A_h} = d_{B_h} = 0$, and hence $\mathbb{X}(G_h)$ is the abelian group generated by $\lambda$ with the relation $(n+1)\lambda = 0$.

Now, we consider the symmetric case. We will keep denoting by $G_h$ the stabilizer of the matrix $J_h$ in (\ref{matI}) under the action (\ref{actquad}). Write an element $A\in SL(n+1)$ as in (\ref{matdiv}). Then $A\in G_h$ if and only if $A_{n+1-h,h} = 0$ and $A_{h,h}A_{h,h}^{T} = \lambda I_{h,h}$. Therefore, $\mathbb{X}(G_h)$ is generated by 
$$d_{A_h} := \det(A_{h,h}), \lambda$$
with the relation $2d_{A_{h}}-h\lambda = 0$. 

Assume $h < n+1$. If $h = 2k+1$ then $(2,-h)\in \mathbb{Z}^{2}$ is primitive. Considering the basis $u = 2d_{A_{h}}-h\lambda, v = d_{A_{h}}-k\lambda$ of $\mathbb{Z}^2$ we get that $\mathbb{X}(G_h)\cong \mathbb{Z}^2/\left\langle u\right\rangle\cong \mathbb{Z}$. If $h = 2k$ then $(2,-h) = 2(1,-k)$, and considering the basis $u = 2d_{A_{h}}-k\lambda, v = \lambda$ of $\mathbb{Z}^2$ we get that $\mathbb{X}(G_h)\cong \mathbb{Z}^2/\left\langle 2u\right\rangle\cong \mathbb{Z}/2\mathbb{Z}\oplus\mathbb{Z}$. Finally, if $h = n+1$ we have $d_{A_{h}} = 0$, and hence $(n+1)\lambda = 0$. So $\mathbb{X}(G_h)\cong \mathbb{Z}/(n+1)\mathbb{Z}$.
\end{proof}

\begin{Proposition}\label{picrank}
The Picard rank of $\mathcal{C}(n,m,h)$ and $\mathcal{Q}(n,h)$ is given by
$$
\rho(\mathcal{C}(n,m,h))=
\left\lbrace
\begin{array}{ll}
h-1 & \text{if } h = n+1 = m+1;\\ 
h+1 & \text{if } h<n+1;\\
h & \text{if } h = n+1 < m+1;
\end{array}\right.
$$
and
$$
\rho(\mathcal{Q}(n,h))=
\left\lbrace
\begin{array}{ll}
h & \text{if } h < n+1;\\ 
h-1 & \text{if } h = n+1.
\end{array}\right.
$$
\end{Proposition}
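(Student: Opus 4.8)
The plan is to compute the Picard rank by comparing $\Pic(\mathcal{C}(n,m,h))$ with the Picard group of its open orbit, which is already determined in Proposition \ref{picg}, the discrepancy being governed by the boundary divisors. Since $\mathcal{C}(n,m,h)$ is smooth by Theorem \ref{comp}, its Picard group coincides with its divisor class group; and by the same theorem the open $G$-orbit $\mathcal{C}(n,m,h)^o$, for $G=SL(n+1)\times SL(m+1)$, is exactly the complement of the boundary divisors $E_1,\dots,E_{h-1}$, every deeper orbit closure $\bigcap_{i\in I}E_i$ having codimension at least two (note that the colors are $B$-invariant but not $G$-invariant, so they meet the open $G$-orbit and do not enter here). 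The excision sequence for the class group of a smooth variety then gives
\begin{equation*}
\bigoplus_{i=1}^{h-1}\mathbb{Z}\,E_i \xrightarrow{\ \alpha\ } \Pic(\mathcal{C}(n,m,h)) \longrightarrow \Pic(\mathcal{C}(n,m,h)^o) \longrightarrow 0,
\end{equation*}
whence $\rho(\mathcal{C}(n,m,h)) = \rank(\im\alpha) + \rank\Pic(\mathcal{C}(n,m,h)^o)$.

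The heart of the argument, and the step I expect to be the only genuine obstacle, is to show that $\alpha$ has finite kernel, i.e.\ that $[E_1],\dots,[E_{h-1}]$ are linearly independent. The kernel of $\alpha$ consists of the principal divisors supported on $\bigcup_i E_i$; as $\mathcal{C}(n,m,h)$ is projective these are precisely the divisors of the global invertible functions on the open orbit, so $\ker\alpha\cong \mathcal{O}(\mathcal{C}(n,m,h)^o)^*/K^*$. I would then write the orbit as $G/G_h$ and pull a unit back to $G$: by Rosenlicht's theorem the units on a connected group satisfy $\mathcal{O}(G)^*/K^*\cong\mathbb{X}(G)$, and $\mathbb{X}(G)=0$ because $G$ is a product of special linear groups. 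Hence every global unit on $G/G_h$ is constant, $\ker\alpha=0$, the displayed sequence is short exact, and $\rank(\im\alpha)=h-1$. I do not anticipate difficulty here beyond correctly setting up the excision sequence and invoking Rosenlicht's theorem, the whole point being that the triviality of $\mathbb{X}(G)$ forces the independence of the $E_i$.

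It then remains only to substitute the ranks of $\Pic(\mathcal{C}(n,m,h)^o)\cong\mathbb{X}(G_h)$ recorded in Proposition \ref{picg}: this rank is $2$ when $h<n+1$, is $1$ when $h=n+1<m+1$, and is $0$ when $h=n+1=m+1$, giving $\rho=h+1$, $h$, and $h-1$ respectively. For $\mathcal{Q}(n,h)$ the reasoning is verbatim the same with $G=SL(n+1)$ (again $\mathbb{X}(G)=0$), using the boundary divisors $E_1^{\mathcal{Q}},\dots,E_{h-1}^{\mathcal{Q}}$; here Proposition \ref{picg} shows $\Pic(\mathcal{Q}(n,h)^o)$ has rank $1$ for $h<n+1$ of either parity, the torsion summand $\mathbb{Z}/2\mathbb{Z}$ not contributing to the rank, and rank $0$ for $h=n+1$, yielding $\rho=h$ and $\rho=h-1$ as claimed.
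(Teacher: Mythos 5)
Your argument is correct and follows essentially the same route as the paper: both reduce the computation to the exact sequence $0\rightarrow\mathbb{Z}^{h-1}\rightarrow\Pic(\mathcal{C}(n,m,h))\rightarrow\Pic(\mathcal{C}(n,m,h)^{o})\rightarrow 0$ coming from the boundary divisors of the wonderful structure, and then read off the rank of $\Pic(\mathcal{C}(n,m,h)^{o})$ from Proposition \ref{picg}. The only difference is that the paper invokes \cite[Proposition 2.2.1]{Br07} for this sequence, whereas you re-derive it via excision together with Rosenlicht's theorem and the triviality of $\mathbb{X}(SL(n+1)\times SL(m+1))$ --- which is precisely the content of the cited result.
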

\begin{proof}
Assume that $h< n+1$. Since, by Theorem \ref{comp} the variety $\mathcal{C}(n,m,h)$ is wonderful with boundary divisors $E_{1},\dots,E_{h-1}$, \cite[Proposition 2.2.1]{Br07} yields an exact sequence
$$ 0 \rightarrow \mathbb{Z}^{h-1} \rightarrow \Pic(\mathcal{C}(n,m,h)) \rightarrow \Pic(\mathcal{C}(n,m,h)^{o}) \rightarrow 0$$
where $\mathbb{Z}^{h-1}$ is the free abelian group generated by the boundary divisors. To conclude it is enough to use Proposition \ref{picg}. The proof in the symmetric case is similar. 
\end{proof}

For $i=1, \dots, h$, we define the divisor $D_i^{\mathcal{C}}$ in $\mathcal{C}(n,m,h)$ as the strict transform of the divisor given by the intersection of $\sec_h(\mathcal{S}^{n,m})$ with 
$$\det \begin{pmatrix}
z_{0,0} & \dots & z_{0,i-1}\\
\vdots & \ddots & \vdots \\
z_{i-1,0} & \dots & z_{i-1,i-1}\\
\end{pmatrix}=0.$$
We will keep the same notation for the corresponding divisors in the intermediate blow-ups $\sec_{h}^{(k)}(\mathcal{S}^{n,m})$.

Similarly, for $i=1, \dots, h$ we define the divisor $D_i^{\mathcal{Q}}$ in $\mathcal{Q}(n,h)$ as the strict transform of the divisor given by the intersection of $\sec_h(\mathcal{V}^n)$ with
$$\det \begin{pmatrix}
z_{0,0} & \dots & z_{0,i-1}\\
\vdots & \ddots & \vdots \\
z_{0,i-1} & \dots & z_{i-1,i-1}\\
\end{pmatrix}=0.$$ 
Again we will keep the same notation for the corresponding divisors in the intermediate blow-ups $\sec_{h}^{(k)}(\mathcal{V}^{n})$.

\begin{Lemma}\label{lem_r_c}
Let $Z$ be an $(n+1) \times (m+1)$ matrix of rank $k < \min\{n+1,m+1\}$ such that the determinant of the top left $k\times k$ minor $Z_k$ of $Z$ vanishes. Then, either the first $k$ rows of $Z$ are linearly dependent or the the first $k$ columns of $Z$ are linearly dependent.
\end{Lemma}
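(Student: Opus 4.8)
The plan is to argue by contraposition: assuming that the first $k$ rows of $Z$ are linearly independent \emph{and} the first $k$ columns are linearly independent, I will show that the top-left minor $Z_k$ is nonsingular, so that $\det Z_k \neq 0$. Write $Z$ in block form
\[
Z = \begin{pmatrix} Z_k & B \\ C & D \end{pmatrix}
\]
with $Z_k$ the top-left $k\times k$ block, $B$ of size $k\times(m+1-k)$, $C$ of size $(n+1-k)\times k$, and $D$ of size $(n+1-k)\times(m+1-k)$; the hypothesis $k<\min\{n+1,m+1\}$ guarantees that the blocks $B$ and $C$ are genuinely present. With this notation the first $k$ rows are $[\,Z_k \mid B\,]$ and the first $k$ columns are $\begin{pmatrix} Z_k \\ C\end{pmatrix}$.

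First I would exploit the rank hypothesis. Since $\rank(Z)=k$ and the $k$ rows $[\,Z_k\mid B\,]$ are assumed linearly independent, they form a basis of the row space of $Z$; hence every remaining row is a linear combination of them. In block terms this produces a matrix $M$ of size $(n+1-k)\times k$ with $[\,C\mid D\,]=M\,[\,Z_k\mid B\,]$, and in particular $C=MZ_k$.

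Next I would bring in the column hypothesis to force a contradiction should $Z_k$ be singular. If $\det Z_k=0$, choose a nonzero vector $v\in K^k$ with $Z_kv=0$. On one hand, the relation $C=MZ_k$ gives $Cv=M(Z_kv)=0$. On the other hand, the linear independence of the first $k$ columns means that $v\mapsto \begin{pmatrix} Z_k\\ C\end{pmatrix}v$ is injective, so $\begin{pmatrix} Z_k\\ C\end{pmatrix}v\neq 0$; since its top block $Z_kv$ vanishes, the bottom block $Cv$ must be nonzero. This contradicts $Cv=0$, so $Z_k$ is invertible and the contrapositive is established.

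The argument is pure linear algebra, so I expect no serious obstacle; the one point requiring care is the correct use of the rank hypothesis, namely that the independence of the first $k$ rows \emph{together with} $\rank(Z)=k$ makes them an actual basis of the row space (this is exactly where the equality $\rank(Z)=k$, rather than merely $\rank(Z)\geq k$, is used), which yields the relation $C=MZ_k$ in precisely the direction needed to clash with the column-independence conclusion $Cv\neq 0$.
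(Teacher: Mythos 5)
Your proof is correct, and every step holds up: the rank hypothesis together with the independence of the first $k$ rows does make them a basis of the row space, giving $C = MZ_k$, and a kernel vector of $Z_k$ then produces a vanishing nontrivial combination of the first $k$ columns, which is exactly the contradiction you need. The paper argues by the same overall contraposition but executes it differently: instead of the block relation $C = MZ_k$, it normalizes the first $k$ columns by a change of basis on $K^{n+1}$ (choosing them to be $\bar e_1,\dots,\bar e_{k-1},\bar e_{k+1}$, which encodes $\det Z_k = 0$), writes out the first $k+1$ rows explicitly, and shows they are linearly independent, contradicting $\rank(Z)=k$. So the paper localizes the contradiction against the rank bound, while you localize it against column independence; your version is coordinate-free, avoids the explicit basis normalization and row-by-row computation, and makes more transparent exactly where the equality $\rank(Z)=k$ (rather than $\rank(Z)\geq k$) enters. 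Either argument suffices for the application in Corollary \ref{Dksplit}, where matrices of rank strictly less than $k$ satisfy the conclusion trivially.
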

\begin{proof}
Assume that both the first $k$ rows and the first $k$ columns of $Z$ are linearly independent. We will then prove that either $\det(Z_k) \neq 0$ or $\rank(Z) > k$. If $\det(Z_k) \neq 0$ the claim follows. So, assume $\det(Z_k) = 0$. We will write $e_1, \dots, e_{m+1}$ for the canonical basis of $K^{m+1}$ and $\bar{e}_1,\dots \bar{e}_{n+1}$ for the canonical basis of $K^{n+1}$. Since the first $k$ columns of $Z$ are linearly independent, up to a change of coordinates, we may assume that these columns are the vectors $\bar{e}_1,\bar{e}_2, \dots,\bar{e}_{k-1},\bar{e}_{k+1}$. The first $k+1$ rows of the matrix $Z$ are of the following form
$$
\left\lbrace\begin{array}{lll}
Z_{0,-} &= e_1^t+a^0_{k+1}e_{k+1}^t+ \dots +a^0_{m+1}e_{m+1}^t; \\
Z_{1,-} &= e_2^t+ a^1_{k+1}e_{k+1}^t+ \dots +a^1_{m+1}e_{m+1}^t; \\
 & \vdots & \\
Z_{k-2,-} &= e_{k-1}^t+a^{k-2}_{k+1}e_{k+1}^t+ \dots +a^{k-2}_{m+1}e_{m+1}^t; \\
Z_{k-1,-} &= a^{k-1}_{k+1}e_{k+1}^t+ \dots +a^{k-1}_{m+1}e_{m+1}^t; \\
Z_{k,-} &= e_{k}^t+a^{k}_{k+1}e_{k+1}^t+ \dots +a^{k}_{m+1}e_{m+1}^t; \\
\end{array}\right.
$$
for some $a_i^j \in K$. By assumption, the first $k$ rows are linearly independent and so we must have $a^{k-1}_{i} \neq 0$ for at least one $i \in \{k+1, \dots m+1\}$. Hence, the $k+1$ rows $Z_{0,-},\dots,Z_{k,-}$ are linearly independent, and $\rank(Z) \ge k+1$.
\end{proof}

\begin{Corollary}\label{Dksplit}
For $k < \min\{n+1,m+1\}$, the divisor cut out on $\sec_k(\mathcal{S}^{n,m})$ by the top left $k\times k$ minor of the matrix in (\ref{matrix}) has two components $H_1$ and $H_2$, where $H_1$ is cut out by the $k \times k$ minors of the first $k$ rows of $Z$, and $H_2$ is cut out by the $k \times k$ minors of the first $k$ columns of $Z$.
\end{Corollary}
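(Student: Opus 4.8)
The plan is to establish first the set-theoretic identity $D=H_1\cup H_2$, where $D$ is the divisor cut out on $\sec_k(\mathcal{S}^{n,m})$ by $\det(Z_k)$ with $Z_k$ the top left $k\times k$ block of the matrix $Z$ in (\ref{matrix}), $H_1$ is the locus where the first $k$ rows of $Z$ are linearly dependent (i.e.\ where all $k\times k$ minors of the top $k\times(m+1)$ block vanish), and $H_2$ is the locus where the first $k$ columns are linearly dependent; and then to check that $H_1$ and $H_2$ are distinct irreducible divisors, so that they are exactly the two irreducible components of $D$. The inclusions $H_1\subseteq D$ and $H_2\subseteq D$ are immediate, since $\det(Z_k)$ is one of the $k\times k$ minors formed from the first $k$ rows (and likewise from the first $k$ columns), hence vanishes whenever all of those minors do.

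For the reverse inclusion I would argue pointwise, splitting according to the rank. Recall that $\sec_k(\mathcal{S}^{n,m})$ is the locus of matrices of rank at most $k$. Let $p\in D$ be represented by $Z$ with $\det(Z_k)=0$. If $\rank(Z)\le k-1$, then the $k\times(m+1)$ submatrix formed by the first $k$ rows already has rank at most $k-1$, so these rows are dependent and $p\in H_1$ (in fact such $p$ lies in $H_1\cap H_2$). If $\rank(Z)=k$, then Lemma \ref{lem_r_c} applies verbatim --- its hypothesis is precisely $k<\min\{n+1,m+1\}$, which is the assumption of the corollary --- and gives that the first $k$ rows or the first $k$ columns are dependent, i.e.\ $p\in H_1\cup H_2$. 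This yields $D=H_1\cup H_2$.

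It then remains to prove that $H_1$ and $H_2$ are irreducible and distinct. Here I would use the standard parametrization of the affine cone over $\sec_k(\mathcal{S}^{n,m})$, namely the map $\phi\colon(A,B)\mapsto AB$ with $A$ an $(n+1)\times k$ matrix and $B$ a $k\times(m+1)$ matrix, whose image is the cone of rank $\le k$ matrices. Writing $A_k$ for the top $k\times k$ block of $A$, the first $k$ rows of $AB$ equal $A_kB$, which are dependent exactly when $\rank(A_kB)<k$; one checks that $H_1=\overline{\phi(\{\det(A_k)=0\})}$, the nontrivial inclusion being handled by factoring a given $Z\in H_1$ through its rank and verifying $\det(A_k)=0$. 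Since $\det(A_k)$ is the determinant of a generic square matrix, hence an irreducible polynomial, the hypersurface $\{\det(A_k)=0\}$ is irreducible, and therefore so is its image $H_1$; the symmetric argument with the first $k$ columns of $B$ gives the irreducibility of $H_2$. Finally $H_1\neq H_2$: one exhibits an explicit rank-$k$ matrix whose first $k$ rows are dependent but whose first $k$ columns are independent, which lies in $H_1\setminus H_2$, and symmetrically a point of $H_2\setminus H_1$; such matrices exist precisely because $k<\min\{n+1,m+1\}$ leaves enough rows and columns. As both are divisors, neither can contain the other, so $H_1$ and $H_2$ are the two irreducible components of $D$.

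The genuinely substantive step is the last one. The set-theoretic splitting is an essentially immediate consequence of Lemma \ref{lem_r_c}, whereas verifying that each piece is irreducible --- and that the two pieces are distinct divisors rather than one containing the other --- requires the parametrization together with the irreducibility of the determinant. The point demanding the most care will be the identification $H_1=\overline{\phi(\{\det(A_k)=0\})}$, in particular the factorization of the lower-rank points of $H_1$, which is what makes irreducibility transfer from the source hypersurface to $H_1$ itself.
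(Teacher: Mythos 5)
Your proof is correct and follows the same route as the paper, whose entire argument is the one-line observation that the claim follows from Lemma \ref{lem_r_c}; your set-theoretic decomposition $D=H_1\cup H_2$ via that lemma (splitting off the rank $\le k-1$ case, where both loci trivially contain the point) is exactly the intended content. The additional work you do --- irreducibility of $H_1$ and $H_2$ via the parametrization $(A,B)\mapsto AB$ and the irreducibility of the determinant, plus exhibiting points in $H_1\setminus H_2$ and $H_2\setminus H_1$ --- is sound and fills in details the paper leaves implicit.
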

\begin{proof}
The claim follows immediately from Lemma \ref{lem_r_c}.
\end{proof}

\begin{Remark}
In $\sec_k(\mathcal{V}^n)$ the divisor associated to $D_k$ is irreducible. Indeed, in the symmetric case the divisors $H_1,H_2$ in Corollary \ref{Dksplit} coincide.

In order to further clarify this we explicitly work out the case of $3\times 3$ matrices. The hypersurface $D_2 = \{z_{0,0}z_{1,1}-z_{0,1}z_{1,0} = 0\}$ cuts out on $\sec_2(\mathcal{S}^{2,2})\subset\mathbb{P}^8$ a divisor with two irreducible components:
$$H_1 = \{z_{0,1}z_{1,0}-z_{0,0}z_{1,1} = z_{0,2}z_{1,1}-z_{0,1}z_{1,2} = z_{0,2}z_{1,0}- z_{0,0}z_{1,2} = 0\};$$
$$H_2 = \{z_{0,1}z_{1,0}-z_{0,0}z_{1,1} = z_{0,1}z_{2,0}-z_{0,0}z_{2,1}= z_{1,1}z_{2,0}-z_{1,0}z_{2,1} = 0\}.$$
In the symmetric case the divisor $\{z_{0,0}z_{1,1}-z_{0,1}^2 = 0\}$ cuts out on $\sec_2(\mathcal{V}^2)\subset\mathbb{P}^5$ the irreducible divisor
$$\{z_{0,2}z_{1,1}-z_{0,1}z_{1,2} = z_{0,1}z_{0,2}-z_{0,0}z_{1,2} = z_{0,1}^2-z_{0,0}z_{1,1} = 0\}$$
with multiplicity two.    
\end{Remark}

\begin{Notation}
We will denote by $H_1^{\mathcal{C}},H_2^{\mathcal{C}}$ the strict transforms of $H_1,H_2$ in $\mathcal{C}(n,m,h)$.
\end{Notation}

\begin{Proposition}\label{col_bou}
The set of colors of $\mathcal{C}(n,m,h)$ is given by
$$
\begin{array}{ll}
\{D_1^{\mathcal{C}}, \dots, D_n^{\mathcal{C}}\} & \text{if } h = n+1 = m+1;\\ 
\{H_1^{\mathcal{C}},H_2^{\mathcal{C}}, D_1^{\mathcal{C}}, \dots, D_{h-1}^{\mathcal{C}}\} & \text{if } h<n+1;\\
\{D_1^{\mathcal{C}}, \dots, D_{n+1}^{\mathcal{C}}\} & \text{if } h = n+1 < m+1;
\end{array}
$$
while for $\mathcal{Q}(n,h)$ the set of colors is given by
$$
\begin{array}{ll}
\{D_1^{\mathcal{Q}}, \dots, D_{h}^{\mathcal{Q}}\} & \text{if } h < n+1;\\ 
\{D_1^{\mathcal{Q}}, \dots, D_{n}^{\mathcal{Q}}\}\ & \text{if } h = n+1.
\end{array}
$$

\end{Proposition}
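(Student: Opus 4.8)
The plan is to reduce to the open orbit and then enumerate the $\mathscr{B}$-invariant prime divisors there. Recall that a color meets the open $\mathscr{G}$-orbit: any $\mathscr{B}$-invariant prime divisor contained in the complement of the open orbit would be one of the $\mathscr{G}$-invariant boundary components, which by Theorem \ref{comp} are exactly $E_1,\dots,E_{h-1}$. Hence the colors of $\mathcal{C}(n,m,h)$ are precisely the closures of the $\mathscr{B}$-invariant prime divisors of the open orbit $\mathcal{C}(n,m,h)^{o}$. Since all the blow-up centers lie in $\sec_{h-1}(\mathcal{S}^{n,m})$, the blow-down restricts to an isomorphism on the open orbit, so $\mathcal{C}(n,m,h)^{o}$ is the $(SL(n+1)\times SL(m+1))$-orbit of the matrix $J_h$ of (\ref{matI}), i.e. the locus of matrices of rank exactly $h$ modulo scalar; likewise $\mathcal{Q}(n,h)^{o}$ is the orbit of symmetric rank $h$ matrices under (\ref{actquad}). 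It therefore suffices to list the $\mathscr{B}$-invariant prime divisors on these homogeneous spaces, where I would fix $\mathscr{B}$ to be the product of the lower triangular subgroups of $SL(n+1)$ and $SL(m+1)$ (resp. the lower triangular subgroup of $SL(n+1)$ for $\mathcal{Q}$).

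Next I would exhibit the candidate colors as zero loci of semi-invariants. A direct computation shows that for lower triangular $A,B$ the top left $i\times i$ minor of $AZB^{t}$ equals $\left(\prod_{a\le i}A_{aa}\right)\left(\prod_{b\le i}B_{bb}\right)$ times the top left $i\times i$ minor $\det Z_i$ of $Z$; thus each $D_i^{\mathcal{C}}$ is cut out by a $\mathscr{B}$-eigenvector and is $\mathscr{B}$-invariant, and similarly for $\mathcal{Q}$ using (\ref{actquad}). For $i<h$ the trace of $\{\det Z_i=0\}$ on the open orbit is irreducible, while for $i=h$ Corollary \ref{Dksplit} shows that in the non-symmetric range $h<\min\{n+1,m+1\}$ it splits into the two $\mathscr{B}$-invariant components $H_1^{\mathcal{C}},H_2^{\mathcal{C}}$; in the symmetric case Lemma \ref{lem_r_c} forces these components to coincide, so $D_h^{\mathcal{Q}}$ stays irreducible. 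None of these divisors is $\mathscr{G}$-invariant, since a general group element carries the vanishing of the leading minor to the vanishing of a different minor; in particular they are distinct from the $E_j$ and are genuine colors.

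The crucial step is completeness: proving that the listed divisors exhaust the colors. For this I would show that the complement of their union in the open orbit is a single $\mathscr{B}$-orbit, namely $\mathscr{B}\cdot J_h$. Concretely, the non-vanishing of the leading principal minors $\det Z_1,\dots,\det Z_h$ permits a Gaussian-type reduction of a rank $h$ matrix $Z$ to $J_h$ by an element of $\mathscr{B}$ together with a scalar, so that $\{\det Z_1\cdots\det Z_h\neq 0\}$ is exactly the open $\mathscr{B}$-orbit; consequently every $\mathscr{B}$-invariant prime divisor must be an irreducible component of some $\{\det Z_i=0\}$, and reading off these components in each regime gives the stated lists. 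In the degenerate regimes $h=n+1$ the top minor becomes the maximal $(n+1)\times(n+1)$ minor: when $h=n+1=m+1$ it is the full determinant, which is nowhere zero on the open orbit of invertible matrices and hence contributes no color (leaving $D_1^{\mathcal{C}},\dots,D_n^{\mathcal{C}}$); when $h=n+1<m+1$ it is a genuine condition that does not split, since Lemma \ref{lem_r_c} no longer applies once the minor size reaches $\min\{n+1,m+1\}$ (leaving $D_1^{\mathcal{C}},\dots,D_{n+1}^{\mathcal{C}}$). The symmetric computation is identical and yields $D_1^{\mathcal{Q}},\dots,D_h^{\mathcal{Q}}$, dropping to $D_1^{\mathcal{Q}},\dots,D_n^{\mathcal{Q}}$ when $h=n+1$.

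The main obstacle I expect is precisely this completeness claim, and in particular a clean proof that $\mathscr{B}\cdot J_h$ equals the non-vanishing locus of the leading minors: the triangular reduction must be arranged to land exactly on $J_h$ rather than merely on a torus translate, and the cases $h=n+1$ require separate bookkeeping because the relevant minor is then of maximal size and behaves differently. As a consistency check, the number of colors obtained in each case, together with the $h-1$ boundary divisors $E_1,\dots,E_{h-1}$, is compatible with the Picard ranks computed in Propositions \ref{picg} and \ref{picrank} via the exact sequence relating $\Pic(\mathcal{C}(n,m,h))$ to $\Pic(\mathcal{C}(n,m,h)^{o})$.
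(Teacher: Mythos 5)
Your strategy is sound and reaches the right answer, but your completeness argument is genuinely different from the one in the paper. The paper first checks that the listed divisors are $\mathscr{B}$-stable and not $\mathscr{G}$-stable (handling the splitting $D_h^{\mathcal{C}}=H_1^{\mathcal{C}}\cup H_2^{\mathcal{C}}$ exactly as you do, via Corollary \ref{Dksplit} and connectedness of the group), and then concludes that no colors are missing by a counting argument: by \cite[Remark 4.5.5.3]{ADHL15} the Picard group of a spherical wonderful variety is freely generated by its colors, so the number of colors must equal the Picard rank already computed in Propositions \ref{picg} and \ref{picrank}. You instead prove completeness directly, by showing that $\mathscr{B}\cdot J_h$ is exactly the locus in the open $\mathscr{G}$-orbit where the leading principal minors $\det Z_1,\dots,\det Z_h$ are all nonzero, so that every $\mathscr{B}$-invariant prime divisor of the open orbit is a component of some $\{\det Z_i=0\}$. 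This is a legitimate and more self-contained route --- it does not invoke the structure theory of wonderful varieties --- at the price of carrying out the triangular reduction carefully (including the torus-plus-scalar normalization onto $J_h$, which you rightly flag, and which behaves differently when $h=n+1=m+1$) and, more importantly, of knowing the irreducible components of each trace $\{\det Z_i=0\}\cap\mathcal{C}(n,m,h)^{o}$.

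That last point is the one real gap. For $i<h$ you assert that $\{\det Z_i=0\}$ is irreducible on the rank-$h$ locus without argument, and Corollary \ref{Dksplit} and Lemma \ref{lem_r_c} only cover the case where the minor size equals the rank. Since your enumeration of colors rests entirely on reading off these components, this needs a proof --- for instance by adapting the argument of Lemma \ref{lem_r_c}, or by exhibiting $\{\det Z_i=0\}\cap\{\prod_{j\neq i}\det Z_j\neq 0\}\cap\mathcal{C}(n,m,h)^{o}$ as a single $\mathscr{B}$-orbit. Similarly, saying that Lemma \ref{lem_r_c} ``no longer applies'' when $h=n+1<m+1$ does not by itself show the maximal minor stays irreducible there; the correct statement is that on the rank-$(n+1)$ locus the full set of rows is automatically independent, so only the ``columns dependent'' component survives and that divisor is prime. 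With these points supplied your proof is complete, and, as you observe, it is consistent with the Picard-rank bookkeeping that the paper uses in place of your direct orbit computation.
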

\begin{proof}
The claim for $\mathcal{C}(n,m)$ and $\mathcal{Q}(n)$ follows from \cite[Proposition 3.6]{Ma18a}. In particular, the divisors listed in the statement are stabilized by the action of the Borel subgroups in (\ref{actcol}) and (\ref{actquad}) respectively. Moreover, $\sec_h(\mathcal{S}^{n,m})$ and $\sec_h(\mathcal{V}^n)$ are stabilized respectively by the action (\ref{actcol}) and (\ref{actquad}). Then, $D_1^{\mathcal{C}},\dots, D_h^{\mathcal{C}}$ are stabilized by the restriction of the action (\ref{actcol}), and similarly the strict transform in $\mathcal{Q}(n,h)$ of $D_1^{\mathcal{Q}},\dots, D_h^{\mathcal{Q}}$ are stabilized by the restriction of the action (\ref{actquad}).

The groups acting are connected, so any reducible divisor which is stabilized must be stabilized component wise. In particular, since by Corollary \ref{Dksplit} in $\mathcal{C}(n,m,h)$ for $h < n+1$ we have $D_h^{\mathcal{C}} = H_1^{\mathcal{C}}\cup H_2^{\mathcal{C}}$ and since $D_h^{\mathcal{C}}$ is stabilized, we have that both $H_1^{\mathcal{C}}$ and $H_2^{\mathcal{C}}$ are stabilized. 

As noticed in \cite[Remark 4.5.5.3]{ADHL15}, if $(X,\mathscr{G},\mathscr{B},x_0)$ is a spherical wonderful variety with colors $D_1,\dots,D_s$ the big cell $X\setminus (D_1\cup\dots \cup D_s)$ is an affine space. Therefore, it admits only constant invertible global functions and $\Pic(X) = \mathbb{Z}[D_1,\dots,D_s]$.

Now, for $h < n+1$ in $\mathcal{C}(n,m,h)$ we have $h+1$ colors and since by Proposition \ref{picrank} the Picard rank of $\mathcal{C}(n,m,h)$ is $h+1$, these divisors $D_1, \dots, D_{h-1}, H_1^{\mathcal{C}},H_2^{\mathcal{C}}$ must be all the colors. Similarly, for $h=n+1<m+1$ we found the divisors $D_1^{\mathcal{C}}, \dots, D_{n+1}^{\mathcal{C}}$, and since in this case $\rho(\mathcal{C}(n,m,h))=h$, they are all the colors. Note that when $h=n+1=m+1$, the divisor $D_{n+1}^{\mathcal{C}}$ is not a color, since it is stabilized by the whole group. In this case $\rho(\mathcal{C}(n,m,h))=h-1$ and then $D_1^{\mathcal{C}}, \dots, D_{n}^{\mathcal{C}}$ are the colors. With a similar argument we can compute the colors of $\mathcal{Q}(n,h)$.
\end{proof}

\begin{Proposition}\label{effnef}
For the effective and the nef cone of $\mathcal{C}(n,m,h)$ we have 
$$\Eff(\mathcal{C}(n,m,h))= \left\lbrace
\begin{array}{ll}
\langle E_1^{\mathcal{C}}, \dots, E_{h-1}^{\mathcal{C}} \rangle  &\text{ if } h = n+1 = m+1;\\ 
\langle E_1^{\mathcal{C}}, \dots, E_{h-1}^{\mathcal{C}}, H_1^{\mathcal{C}},H_2^{\mathcal{C}} \rangle   &\text{if } h<n+1;\\
\langle E_1^{\mathcal{C}}, \dots, E_{h-1}^{\mathcal{C}}, D_{n+1}^{\mathcal{C}} \rangle  &\text{if } h = n+1 < m+1;
\end{array}\right.
$$
$$
\Nef(\mathcal{C}(n,m,h))= \left\lbrace
\begin{array}{ll}
\langle D_1^{\mathcal{C}}, \dots, D_n^{\mathcal{C}} \rangle  &\text{ if } h = n+1 = m+1;\\ 
\langle D_1^{\mathcal{C}}, \dots, D_{h-1}^{\mathcal{C}}, H_1^{\mathcal{C}},H_2^{\mathcal{C}} \rangle   &\text{if } h<n+1;\\
\langle D_1^{\mathcal{C}}, \dots, D_{n+1}^{\mathcal{C}} \rangle  &\text{if } h = n+1 < m+1;
\end{array}\right.
$$
and for the effective and the nef cone of $\mathcal{Q}(n,h)$ we have 
$$
\Eff(\mathcal{Q}(n,h))= \left\lbrace
\begin{array}{ll}
\langle E_1^{\mathcal{Q}}, \dots, E_{h-1}^{\mathcal{Q}},D_{h}^{\mathcal{Q}} \rangle & \text{ if } h < n+1;\\ 
\langle E_1^{\mathcal{Q}}, \dots, E_{h-1}^{\mathcal{Q}} \rangle & \text{if } h = n+1;
\end{array}\right.$$
$$\Nef(\mathcal{Q}(n,h))= \left\lbrace
\begin{array}{ll}
\langle D_1^{\mathcal{Q}}, \dots, D_h^{\mathcal{Q}} \rangle & \text{ if } h < n+1;\\ 
\langle D_1^{\mathcal{Q}}, \dots, D_{n}^{\mathcal{Q}} \rangle  & \text{if } h=n+1.\\
\end{array}\right.$$
\end{Proposition}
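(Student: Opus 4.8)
For the effective cone I would start from the general fact that on a spherical variety every effective divisor is linearly equivalent to a $\mathscr{B}$-invariant one, so that $\Eff(\mathcal{C}(n,m,h))$ is spanned by the boundary divisors $E_1^{\mathcal{C}},\dots,E_{h-1}^{\mathcal{C}}$ together with the colors, which by Proposition \ref{col_bou} (for $h<n+1$) are $D_1^{\mathcal{C}},\dots,D_{h-1}^{\mathcal{C}},H_1^{\mathcal{C}},H_2^{\mathcal{C}}$. The task is then to discard the redundant generators $D_1^{\mathcal{C}},\dots,D_{h-1}^{\mathcal{C}}$. To this end I would record the relevant linear equivalences. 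Using the local normal form of Proposition \ref{tcones}, the top-left $i\times i$ minor $\det(Z_i)$ has multiplicity $i-k$ along $\sec_k(\mathcal{S}^{n,m})$ for $k<i$ and multiplicity $0$ for $k\ge i$, since in that chart it specializes to the determinant of an $(i-k)\times(i-k)$ block of local coordinates; as a general point of $\sec_k$ avoids the previously blown-up centers, this multiplicity is the one computed before any blow-up. Writing $\pi$ for the total blow-down and $H:=\pi^{*}\mathcal{O}_{\mathbb{P}^N}(1)=D_1^{\mathcal{C}}$, this gives $D_i^{\mathcal{C}}=iH-\sum_{k=1}^{i-1}(i-k)E_k^{\mathcal{C}}$ for $i=1,\dots,h$, with $D_h^{\mathcal{C}}=H_1^{\mathcal{C}}+H_2^{\mathcal{C}}$.

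Taking second differences yields $E_i^{\mathcal{C}}=2D_i^{\mathcal{C}}-D_{i-1}^{\mathcal{C}}-D_{i+1}^{\mathcal{C}}$ with $D_0^{\mathcal{C}}:=0$, that is, the $E_i^{\mathcal{C}}$ are the images of the $D_i^{\mathcal{C}}$ under the Cartan matrix of type $A_{h-1}$, whose inverse has the strictly positive entries $\min(i,j)(h-\max(i,j))/h$. Inverting, each interior color becomes the non-negative combination $D_i^{\mathcal{C}}=\sum_{j=1}^{h-1}\frac{\min(i,j)(h-\max(i,j))}{h}E_j^{\mathcal{C}}+\frac{i}{h}(H_1^{\mathcal{C}}+H_2^{\mathcal{C}})$, so it lies in the cone generated by $E_1^{\mathcal{C}},\dots,E_{h-1}^{\mathcal{C}},H_1^{\mathcal{C}},H_2^{\mathcal{C}}$. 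These $h+1$ classes are linearly independent (the change of basis from the colors is block lower triangular with blocks the $A_{h-1}$ Cartan matrix and the $2\times 2$ identity, hence has determinant $h$), so they form a basis of $N^1$ and generate $\Eff(\mathcal{C}(n,m,h))$; a simplicial cone on a basis has exactly its generators as extremal rays.

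For the nef cone I would first prove $\langle D_1^{\mathcal{C}},\dots,D_{h-1}^{\mathcal{C}},H_1^{\mathcal{C}},H_2^{\mathcal{C}}\rangle\subseteq\Nef(\mathcal{C}(n,m,h))$ by exhibiting each color as a semiample class. The linear system of $i\times i$ minors of $Z$ has base locus $\sec_{i-1}(\mathcal{S}^{n,m})$, which is resolved after blowing up $\sec_1,\dots,\sec_{h-1}$; thus its strict transform is base-point-free and $D_i^{\mathcal{C}}$, a member of it, is semiample for $i\le h-1$. For $H_1^{\mathcal{C}},H_2^{\mathcal{C}}$ I would use the column-space and row-space morphisms $\mathcal{C}(n,m,h)\to\mathbb{G}(h,n+1)$ and $\mathcal{C}(n,m,h)\to\mathbb{G}(h,m+1)$ resolving $Z\mapsto\im(Z)$ and $Z\mapsto\mathrm{rowspan}(Z)$ (again defined once $\sec_{h-1}$ is blown up): by the Cauchy--Binet identity $\det Z[I\,|\,J]=p^{\,\mathrm{col}}_I\,p^{\,\mathrm{row}}_J$ the pullbacks of the two Plücker bundles are exactly $H_1^{\mathcal{C}}$ and $H_2^{\mathcal{C}}$, and $H_1^{\mathcal{C}}+H_2^{\mathcal{C}}=D_h^{\mathcal{C}}$, so both are globally generated. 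Since the colors form a basis of $N^1$, their positive span is a full-dimensional simplicial subcone of $\Nef(\mathcal{C}(n,m,h))$.

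The reverse inclusion is where the real work lies, and it is the main obstacle. As the colors form a basis, the cone $\langle D_1^{\mathcal{C}},\dots,D_{h-1}^{\mathcal{C}},H_1^{\mathcal{C}},H_2^{\mathcal{C}}\rangle$ is cut out by the hyperplanes dual to this basis, so it suffices to produce, for each color $c$, an effective curve contracted by the morphisms attached to the remaining colors (so that it meets them in zero) while meeting $c$ positively; the resulting pairing matrix against the colors is then diagonal with positive entries, and any nef class, pairing non-negatively with each of these curves, lies in the cone. I would extract such curves from the contraction structure: a general fibre of each of the morphisms above (the two Grassmannian maps and the $i$-th minor maps) contains curves not contracted by the other morphisms, and these should be organized, using the fibrations $E_i^{\mathcal{C}}\to\mathcal{C}(n,m,i)$ of Theorem \ref{comp}, into a curve system dual to the colors. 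Verifying that these curves are effective and that their intersection matrix against the colors is, up to positive scaling, the identity is the crux; granting it, the two inclusions give $\Nef(\mathcal{C}(n,m,h))=\langle D_1^{\mathcal{C}},\dots,D_{h-1}^{\mathcal{C}},H_1^{\mathcal{C}},H_2^{\mathcal{C}}\rangle$. The cases $h=n+1$ reduce to the complete collineations and quadrics of \cite{Ma18a}, and the remaining case of $\mathcal{C}$ and all cases of $\mathcal{Q}$ follow by the same argument once the endpoint conventions dictated by Proposition \ref{col_bou} are inserted; in the symmetric case $H_1=H_2$, so the single color $D_h^{\mathcal{Q}}$ plays the role of the pair $H_1^{\mathcal{C}},H_2^{\mathcal{C}}$.
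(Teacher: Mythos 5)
Your treatment of the effective cone is complete and correct, and it is a more explicit variant of what the paper does. Both arguments start from the same fact that $\Eff$ of a spherical variety is generated by the boundary divisors and the colors (\cite[Proposition 4.5.4.4]{ADHL15} together with Proposition \ref{col_bou}); where the paper then discards $D_1^{\mathcal{C}},\dots,D_{h-1}^{\mathcal{C}}$ by observing that each $D_i^{\mathcal{C}}$ is big (it induces a birational morphism contracting $E_i^{\mathcal{C}}$, by \cite[Section 5]{Ma18a}) and hence lies in the interior of $\Eff$, you instead compute the classes $D_i^{\mathcal{C}}=iH-\sum_{k<i}(i-k)E_k^{\mathcal{C}}$ and invert the resulting type $A_{h-1}$ Cartan relations to exhibit each $D_i^{\mathcal{C}}$ as a positive combination of $E_1^{\mathcal{C}},\dots,E_{h-1}^{\mathcal{C}},H_1^{\mathcal{C}}+H_2^{\mathcal{C}}$. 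Your multiplicity computation is consistent with the paper's Lemma \ref{div_dec} (which establishes exactly these classes for $\mathcal{Q}(n,3)$), and the conclusion that the $h+1$ remaining classes form a basis and hence are the extremal rays is sound. This buys a self-contained, quantitative proof of the effective-cone statement at the cost of the multiplicity verification.

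The nef cone is where your proposal has a genuine gap, and you flag it yourself: you prove only the inclusion $\langle D_1^{\mathcal{C}},\dots,D_{h-1}^{\mathcal{C}},H_1^{\mathcal{C}},H_2^{\mathcal{C}}\rangle\subseteq\Nef(\mathcal{C}(n,m,h))$ (via semiampleness of the colors), and for the reverse inclusion you only describe a dual-curve strategy whose key verification --- that the proposed curves exist, are effective, and pair with the colors in a diagonal, positive matrix --- you explicitly leave open as ``the crux.'' Without that verification the equality is not established: a priori $\Nef$ could be strictly larger than the simplicial cone on the colors, since nothing you have proved rules out additional nef classes outside it. The paper closes exactly this step not by constructing curves but by invoking the structure theory of spherical varieties, namely \cite[Section 2.6]{Br89}, which gives that the nef cone of such a variety is generated by the colors; this is the missing ingredient. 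Either supply that citation (which makes your semiampleness paragraph superfluous as well) or actually carry out the curve construction --- for instance, using the fibrations $E_i^{\mathcal{C}}\rightarrow\mathcal{C}(n,m,i)$ of Theorem \ref{comp} and the contractions attached to each color --- and compute the intersection numbers. As written, the nef-cone half of the statement is asserted rather than proved.
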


\begin{proof}
The statement for $\mathcal{C}(n,m)$ and $\mathcal{Q}(n)$ follows from \cite[Theorem 3.13]{Ma18a}. We consider now the case $h < n+1$.

Consider $\mathcal{C}(n,m,h)$. By \cite[Proposition 4.5.4.4]{ADHL15}, Theorem \ref{comp} and Proposition \ref{effnef} the effective cone of $\mathcal{C}(n,m,h)$ is generated by $E_1^{\mathcal{C}}, \dots, E^{\mathcal{C}}_{h-1}, D_1^{\mathcal{C}}, \dots, D_{h-1}^{\mathcal{C}}, H_1^{\mathcal{C}},H_2^{\mathcal{C}}$. By \cite[Section 5]{Ma18a} the divisor $D_i^{\mathcal{C}}$ induces a birational morphism that contracts the exceptional divisor $E^{\mathcal{C}}_i$. Therefore $D_i^{\mathcal{C}}$ lies in the interior of the effective cone for any $i=1, \dots, h-1$. In particular, since by Proposition \ref{picrank} $\rho(\mathcal{C}(n,m,h))=h+1$, we conclude that the extremal rays of the effective cone are $E_1^{\mathcal{C}}, \dots, E_{h-1}^{\mathcal{C}}, H_1^{\mathcal{C}},H_2^{\mathcal{C}}$. 

Furthermore, by \cite[Section 2.6]{Br89} the nef cone is generated by $D_1^{\mathcal{C}}, \dots, D_{h-1}^{\mathcal{C}}, H_1^{\mathcal{C}},H_2^{\mathcal{C}}$. A similar argument gives the generators for the effective and nef cone of $\mathcal{Q}(n,h)$.
\end{proof}

\subsection{Birational geometry of $\mathcal{C}(n,m,h)$ and $\mathcal{Q}(n,h)$}
Let $X$ be a normal $\mathbb{Q}$-factorial variety. We say that a birational map  $f: X \dasharrow X'$ to a normal projective variety $X'$  is a \emph{birational contraction} if its
inverse does not contract any divisor. 
We say that it is a \emph{small $\mathbb{Q}$-factorial modification} 
if $X'$ is $\mathbb{Q}$-factorial and $f$ is an isomorphism in codimension one.
If $f: X \dasharrow X'$ is a small $\mathbb{Q}$-factorial modification then 
the natural pullback map $f^*:N^1(X')\to N^1(X)$ sends $\Mov(X')$ and $\Eff(X')$
isomorphically onto $\Mov(X)$ and $\Eff(X)$ respectively. In particular, we have $f^*(\Nef(X'))\subset \overline{\Mov(X)}$.

Now, assume that the divisor class group $\Cl(X)$ is free and finitely generated, and fix a subgroup $G$ of the group of Weil divisors on $X$ such that the canonical map $G\rightarrow\Cl(X)$, mapping a divisor $D\in G$ to its class $[D]$, is an isomorphism. The \textit{Cox ring} of $X$ is defined as
$$\Cox(X) = \bigoplus_{[D]\in \Cl(X)}H^0(X,\mathcal{O}_X(D))$$
where $D\in G$ represents $[D]\in\Cl(X)$, and the multiplication in $\Cox(X)$ is defined by the standard multiplication of homogeneous sections in the field of rational functions on $X$. 

\begin{Definition}\label{def:MDS} 
A normal projective $\mathbb{Q}$-factorial variety $X$ is called a \emph{Mori dream space}
if the following conditions hold:
\begin{enumerate}
\item[-] $\Pic{(X)}$ is finitely generated, or equivalently $h^1(X,\mathcal{O}_X)=0$,
\item[-] $\Nef{(X)}$ is generated by the classes of finitely many semi-ample divisors,
\item[-] there is a finite collection of small $\mathbb{Q}$-factorial modifications
 $f_i: X \dasharrow X_i$, such that each $X_i$ satisfies the second condition above, and $
 \Mov{(X)} \ = \ \bigcup_i \  f_i^*(\Nef{(X_i)})$.
\end{enumerate}
\end{Definition}

The collection of all faces of all cones $f_i^*(\Nef{(X_i)})$ above forms a fan which is supported on $\Mov(X)$.
If two maximal cones of this fan, say $f_i^*(\Nef{(X_i)})$ and $f_j^*(\Nef{(X_j)})$, meet along a facet,
then there exist a normal projective variety $Y$, a small modification $\varphi:X_i\dasharrow X_j$, and $h_i:X_i\rightarrow Y$, $h_j:X_j\rightarrow Y$ small birational morphisms of relative Picard number one such that $h_j\circ\varphi = h_i$. The fan structure on $\Mov(X)$ can be extended to a fan supported on $\Eff(X)$ as follows. 

\begin{Definition}\label{MCD}
Let $X$ be a Mori dream space.
We describe a fan structure on the effective cone $\Eff(X)$, called the \emph{Mori chamber decomposition}. There are finitely many birational contractions from $X$ to Mori dream spaces, denoted by $g_i:X\dasharrow Y_i$.
The set $\Exc(g_i)$ of exceptional prime divisors of $g_i$ has cardinality $\rho(X/Y_i)=\rho(X)-\rho(Y_i)$.
The maximal cones $\mathcal{C}$ of the Mori chamber decomposition of $\Eff(X)$ are of the form $\mathcal{C}_i \ = \left\langle g_i^*\big(\Nef(Y_i)\big) , \Exc(g_i) \right\rangle$. We call $\mathcal{C}_i$ or its interior $\mathcal{C}_i^{^\circ}$ a \emph{maximal chamber} of $\Eff(X)$. We refer to \cite[Proposition 1.11]{HK00} and \cite[Section 2.2]{Ok16} for details.
\end{Definition}

\begin{Remark}\label{sphMDS}
By the work of M. Brion \cite{Br93} we have that $\mathbb{Q}$-factorial spherical varieties are Mori dream spaces. An alternative proof of this result can be found in \cite[Section 4]{Pe14}. In particular, by Theorem \ref{comp} $\mathcal{C}(n,m,h)$ and $\mathcal{Q}(n,h)$ are Mori dream spaces.
\end{Remark}

\begin{Remark}\label{toric}
Recall that by \cite[Proposition 2.11]{HK00} given a Mori Dream Space $X$ there is an embedding $i:X\rightarrow \mathcal{T}_X$ into a simplicial projective toric variety $\mathcal{T}_X$ such that $i^{*}:\Pic(\mathcal{T}_X)\rightarrow \Pic(X)$ is an isomorphism inducing an isomorphism $\Eff(\mathcal{T}_X)\rightarrow \Eff(X)$. Furthermore, the Mori chamber decomposition of $\Eff(\mathcal{T}_X)$ is a refinement of the Mori chamber decomposition of $\Eff(X)$. Indeed, if $\Cox(X) \cong \frac{K[T_1,\dots,T_s]}{I}$ where the $T_i$ are homogeneous generators with non-trivial effective $\Pic(X)$-degrees then $\Cox(\mathcal{T}_X)\cong K[T_1,\dots,T_s]$.

Since the variety $\mathcal{T}_{X}$ is toric, the Mori chamber decomposition of $\Eff(\mathcal{T}_{X})$ can be computed by means of the Gelfand–Kapranov–Zelevinsky, GKZ for short, decomposition \cite[Section 2.2.2]{ADHL15}. Let us consider the family $\mathcal{W}$ of vectors in $\Pic(\mathcal{T}_{X})$ given by the generators of $\Cox(\mathcal{T}_{X})$, and let $\Omega(\mathcal{W})$ be the set of all convex polyhedral cones generated by some of the vectors in $\mathcal{W}$. By \cite[Construction 2.2.2.1]{ADHL15} the GKZ chambers of $\Eff(\mathcal{T}_{X})$ are given by the intersections of all the cones in $\Omega(\mathcal{W})$ containing a fixed divisor in $\Eff(\mathcal{T}_{X})$.
\end{Remark}

\begin{Remark}\label{gen_cox}
Let $(X,\mathscr{G},\mathscr{B},x_0)$ be a projective spherical variety. Consider a divisor $D$ on $X$, and let $f_D$ be the, unique up to constants, section of $\mathcal{O}_X(D)$ associated to $D$. We will denote by $\lin_K(\mathscr{G}\cdot D)\subseteq\Cox(X)$ the finite-dimensional vector subspace of $\Cox(X)$ spanned by the orbit of $f_D$ under the action of $\mathscr{G}$ that is the smallest linear subspace of $\Cox(X)$ containing the $\mathscr{G}$-orbit of $f_D$.

By \cite[Theorem 4.5.4.6]{ADHL15} if $\mathscr{G}$ is a semi-simple and simply connected algebraic group and $(X,\mathscr{G},\mathscr{B},x_0)$ is a spherical variety with boundary divisors $E_1,\dots,E_r$ and colors $D_1,\dots,D_s$ then $\Cox(X)$ is generated as a $K$-algebra by the canonical sections of the $E_i$ and the finite dimensional vector subspaces $\lin_{K}(\mathscr{G}\cdot D_i)\subseteq \Cox(X)$ for $1\leq i\leq s$.
\end{Remark}

Next, we study the birational geometry of $\mathcal{C}(n,m,h)$ and $\mathcal{Q}(n,h)$ when the Picard rank is small. We begin with $\mathcal{Q}(n,h)$. The varieties $\mathcal{Q}(1,2)$ and $\mathcal{Q}(2,3)$ are covered by \cite[Section 6]{Ma18a}. So, the first case to consider is that of $\mathcal{Q}(n,3)$ for $n \ge 3$.

\begin{Lemma}\label{div_dec}
For the variety $\mathcal{Q}(n,3)$ we have that $D_1^{\mathcal{Q}} \sim H$, $D_2^{\mathcal{Q}} \sim 2H-E_1^{\mathcal{Q}}$, $D_3^{\mathcal{Q}} \sim 3H-2E_1^{\mathcal{Q}}-E_2^{\mathcal{Q}}$.
\end{Lemma}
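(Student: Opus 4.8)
The plan is to write the three classes in terms of $H:=\sigma^{*}\mathcal{O}_{\sec_3(\mathcal{V}^n)}(1)$, $E_1^{\mathcal{Q}}$ and $E_2^{\mathcal{Q}}$, where $\sigma\colon\mathcal{Q}(n,3)\to\sec_3(\mathcal{V}^n)$ is the composition of the two blow-ups (recall that $\rho(\mathcal{Q}(n,3))=3$ by Proposition \ref{picrank}, since $3<n+1$). Write $f_i$ for the leading principal $i\times i$ minor of the matrix $Z$ in (\ref{matrix}), so that $D_i^{\mathcal{Q}}$ is by definition the strict transform of $\{f_i=0\}\cap\sec_3(\mathcal{V}^n)$. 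Since $f_i$ is homogeneous of degree $i$, the pulled-back section $\sigma^{*}f_i$ of $\mathcal{O}(iH)$ has divisor of class $iH$, and decomposing it into its strict transform and its exceptional part gives
\[
D_i^{\mathcal{Q}}\ \sim\ iH-b_i\,E_1^{\mathcal{Q}}-c_i\,E_2^{\mathcal{Q}},\qquad b_i=\operatorname{ord}_{E_1^{\mathcal{Q}}}(\sigma^{*}f_i),\quad c_i=\operatorname{ord}_{E_2^{\mathcal{Q}}}(\sigma^{*}f_i).
\]
Thus the whole statement reduces to computing the two vanishing orders $b_i,c_i$ for $i=1,2,3$.

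To compute $b_i$ I would work at a general, hence rank one, point of $\mathcal{V}^n=\sec_1(\mathcal{V}^n)$, using the local description furnished by the symmetric case of Proposition \ref{tcones}. In the affine chart $z_{0,0}\neq 0$ put $w_{i,j}:=z_{i,j}-z_{0,i}z_{0,j}$ for $1\le i\le j\le n$; together with $z_{0,1},\dots,z_{0,n}$ these are local coordinates in which $\sec_3(\mathcal{V}^n)$ is the product of the affine space in the $z_{0,\bullet}$ with the cone over $\sec_2(\mathcal{V}^{n-1})$ in the $w_{i,j}$, and $\mathcal{V}^n=\{w_{i,j}=0\}$ is the vertex locus. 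Blowing up this vertex, the order of vanishing of $\sigma^{*}f_i$ along $E_1^{\mathcal{Q}}$ equals the degree of $f_i$ as a polynomial in the $w_{i,j}$. A direct substitution gives $f_1\equiv 1$, $f_2=w_{1,1}$, and, after the cancellation of all mixed terms, $f_3=w_{1,1}w_{2,2}-w_{1,2}^{2}$; hence $b_1=0$, $b_2=1$, $b_3=2$.

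For $c_i$ I would argue at a general, hence rank two, point $p_2$ of $\sec_2(\mathcal{V}^n)$, which lies off $\mathcal{V}^n$, so that the first blow-up is a local isomorphism there and $E_2^{\mathcal{Q}}$ is the exceptional divisor of the blow-up of $\sec_2(\mathcal{V}^n)$ alone. Near $p_2$ the leading $2\times 2$ block $A_2$ of $Z$ is invertible, and the Schur complements $w'_{i,j}:=z_{i,j}-(B^{t}A_2^{-1}B)_{i,j}$ for $i,j\ge 2$ (with $B$ the block of rows $\{0,1\}$ and columns $\{2,\dots,n\}$) exhibit $\sec_3(\mathcal{V}^n)$ locally as the product of an affine space with the cone over $\sec_1(\mathcal{V}^{n-2})$ in the $w'_{i,j}$, with $\sec_2(\mathcal{V}^n)=\{w'_{i,j}=0\}$ the vertex. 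The Schur identity gives $f_1=z_{0,0}$, $f_2=\det(A_2)$ and $f_3=\det(A_2)\,w'_{2,2}$, where $z_{0,0}$ and $\det(A_2)$ are units at $p_2$; reading off the degrees in the $w'$ coordinates yields $c_1=c_2=0$ and $c_3=1$. Substituting the two sets of orders into the displayed formula produces exactly $D_1^{\mathcal{Q}}\sim H$, $D_2^{\mathcal{Q}}\sim 2H-E_1^{\mathcal{Q}}$ and $D_3^{\mathcal{Q}}\sim 3H-2E_1^{\mathcal{Q}}-E_2^{\mathcal{Q}}$.

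The main obstacle I anticipate is the identification, at both types of points, of the blow-up center with the vertex of the local cone, and the resulting reduction of ``order along the exceptional divisor'' to ``degree in the transverse coordinates''; once Proposition \ref{tcones} pins down the cone structure, the only genuine computation is the cancellation producing $f_3=w_{1,1}w_{2,2}-w_{1,2}^{2}$ near a rank one point, which is exactly what forces $b_3=2$. I would also stress that, because $\sec_3(\mathcal{V}^n)$ is singular along $\sec_2(\mathcal{V}^n)\supseteq\mathcal{V}^n$, one cannot invoke a naive smooth blow-up multiplicity formula, and it is precisely the product-with-a-cone description that renders the two orders computable.
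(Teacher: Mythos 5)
Your proof is correct, but it follows a genuinely different route from the paper. The paper argues by intersection numbers against a single test curve: the strict transform $L$ of the pencil $L_{\mu,\lambda}=\{\mu x_0^2+\lambda(x_1^2+x_2^2)=0\}$, which meets $\mathcal{V}^n$ at one point and $\sec_2(\mathcal{V}^n)\setminus\mathcal{V}^n$ at another; the coefficients of $E_1^{\mathcal{Q}}$ and $E_2^{\mathcal{Q}}$ are then extracted from $D_i^{\mathcal{Q}}\cdot L$ on the intermediate blow-up $\sec_3^{(1)}(\mathcal{V}^n)$ and on $\mathcal{Q}(n,3)$, which requires checking that $L_{\mu,\lambda}$ avoids the relevant tangent cones and tangent spaces and that the $D_i^{\mathcal{Q}}$ are Cartier on the singular intermediate model. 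You instead compute the coefficients as valuations, namely the orders of $\sigma^{*}f_i$ along the two exceptional divisors, evaluated at generic points of the centers. Your local models are not merely infinitesimal: in the chart $z_{0,0}\neq 0$ row reduction gives $\operatorname{rank}(Z)=1+\operatorname{rank}(W)$ with $W=(z_{i,j}-z_{0,i}z_{0,j})$, so $\sec_3(\mathcal{V}^n)$ is \emph{exactly} a product of an affine space with the cone over $\sec_2(\mathcal{V}^{n-1})$ there (and likewise via the Schur complement at a general point of $\sec_2$), which sharpens the tangent-cone statement of Proposition \ref{tcones} to an actual local isomorphism and makes the reduction of ``order along $E$'' to ``initial degree in the cone variables'' legitimate despite the singularity of $\sec_3(\mathcal{V}^n)$. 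The computations $f_2=w_{1,1}$, $f_3=w_{1,1}w_{2,2}-w_{1,2}^2$ and $f_3=\det(A_2)\,w'_{2,2}$ are all correct and give $(b_1,b_2,b_3)=(0,1,2)$, $(c_1,c_2,c_3)=(0,0,1)$ as claimed. The only point you should make explicit is that the initial forms do not vanish identically on the respective cones (e.g.\ $w_{1,1}w_{2,2}-w_{1,2}^2$ is nonzero on the locus $\operatorname{rank}(W)\le 2$, as $W=\operatorname{diag}(1,1,0,\dots,0)$ shows), since the order along the exceptional divisor is the least degree of a component that survives restriction to the cone, not just the formal lowest degree. Your approach costs a little more local algebra but avoids all intersection theory on the singular $\sec_3^{(1)}(\mathcal{V}^n)$ and dispenses with the genericity checks on the test curve; the paper's approach is shorter once the curve is in hand.
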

\begin{proof}
Consider the strict transform $L\subset\mathcal{Q}(n,3)$ of the line $L_{\mu,\lambda}=\{\mu x_0^2+\lambda(x_1^2+x_2^2)=0\}$. This line intersects $\mathcal{V}^n$ at a point $p$, $\sec_2(\mathcal{V}^n)\setminus \mathcal{V}^n$ at a point $q$, and it is not contained neither in the tangent cone of $\sec_2(\mathcal{V}^n)$ at $p$ nor in the tangent space of $\overline{H}_2 = \{z_{0,0}z_{1,1}-z_{0,1}^2=0\}$ at $p$. 

First, consider the blow-up $\sec_3^{(1)}(\mathcal{V}^n)$ of $\sec_3(\mathcal{V}^n)$ along $\mathcal{V}^n$ and keep the same notation for the push-forward to $\sec_3^{(1)}(\mathcal{V}^n)$ of $L$, $D_1^{\mathcal{Q}}, D_2^{\mathcal{Q}}, D_3^{\mathcal{Q}}$. Recall that $\sec_3^{(1)}(\mathcal{V}^n)$ is singular along the strict transform of $\sec_2(\mathcal{V}^n)$. However, $D_2^{\mathcal{Q}}, D_3^{\mathcal{Q}}$ are Cartier on $\sec_3^{(1)}(\mathcal{V}^n)$ since they are restrictions to $\sec_3^{(1)}(\mathcal{V}^n)$ of divisors in the blow-up of $\mathbb{P}^{N_{+}}$ along $\mathcal{V}^n$.

Write $D_2^{\mathcal{Q}} = 2H-aE_1$. Note that $\overline{H}_2$ intersect $L_{\mu,\lambda}$ at $p$. Since $L_{\mu,\lambda}$ is not contained in the tangent space of $\overline{H}_2$ at $p$ in $\sec_3^{(1)}(\mathcal{V}^n)$ the strict transforms $L$ and $D_2^{\mathcal{Q}}$ intersect just in one point. Then $1 = D_2^{\mathcal{Q}}\cdot L = 2-a$ yields $a = 1$. 

Similarly, $L_{\mu,\lambda}$ intersects the cubic hypersurface $\overline{H}_3$ given be the top left $3\times 3$ minor of (\ref{matrix}) at $p$ with multiplicity two and at $q$. Moreover, $L_{\mu,\lambda}$ is not contained in the tangent cone of $\overline{H}_3$ at $p$ and hence in $\sec_3^{(1)}(\mathcal{V}^n)$ the strict transforms $L$ and $D_3^{\mathcal{Q}}$ intersect just in one point. Then $1 = D_3^{\mathcal{Q}}\cdot L$. Writing $D_3^{\mathcal{Q}} \sim 3H-bE_1$ we get $1 = D_3^{\mathcal{Q}}\cdot L = 3-b$ and hence $b = 2$.

Now, we consider $\mathcal{Q}(n,3)$. Since $D_2^{\mathcal{Q}}$ does not contain the strict transform of $\sec_2(\mathcal{V}^n)$ its expression remains unvaried after the last blow-up. On the other hand, $E_2$ must appear in the expression of $D_3^{\mathcal{Q}}$. Let us write $D_3^{\mathcal{Q}} \sim 3H-2E_1-cE_2$ and keep denoting by $L$ its strict transform in $\mathcal{Q}(n,3)$. Note that $L_{\mu,\lambda}$ is not contained in the tangent space of $\overline{H}_3$ at $q$. So $0 = D_3^{\mathcal{Q}}\cdot L = 3-2-c$ and hence $c = 1$.
\end{proof}

\begin{Proposition}\label{mcdq3}
For $n \ge 3$, the Mori chamber decomposition of $\Eff(\mathcal{Q}(n,3))$ has five chambers as displayed in the following $2$-dimension section of $\Eff(\mathcal{Q}(n,3))$
$$
\definecolor{uuuuuu}{rgb}{0.26666666666666666,0.26666666666666666,0.26666666666666666}
\begin{tikzpicture}[xscale=4,yscale=3][line cap=round,line join=round,>=triangle 45,x=1cm,y=1cm]
\clip(-1.7300693940731935,-0.10) rectangle (2.1061218447714393,1.15);
\fill[line width=0.4pt,fill=black,fill opacity=0.10000000149011612] (0,1) -- (-0.2,0.4) -- (0.25,0.25) -- cycle;
\draw [line width=0.1pt] (-1,0)-- (0,1);
\draw [line width=0.1pt] (0,1)-- (1,0);
\draw [line width=0.1pt] (1,0)-- (-1,0);
\draw [line width=0.1pt] (-0.2,0.4)-- (0.25,0.25);
\draw [line width=0.1pt] (-0.2,0.4)-- (0,1);
\draw [line width=0.1pt] (0,1)-- (0.25,0.25);
\draw [line width=0.1pt] (0.25,0.25)-- (1,0);
\draw [line width=0.1pt] (-0.2,0.4)-- (-1,0);
\draw [line width=0.1pt] (-1,0)-- (0.25,0.25);
\begin{scriptsize}
\draw[color=black] (1.0432914585343922,0.09182320667404484) node {$E_1^{\mathcal{Q}}$};
\draw[color=black] (-1.0539194341399391,0.09182320667404484) node {$E_2^{\mathcal{Q}}$};
\draw[color=black] (0.042436715582343396,1.05) node {$D_3^{\mathcal{Q}}$};
\draw[color=black] (0.28692031691414155,0.15) node {$D_1^{\mathcal{Q}}$};
\draw[color=black] (-0.25,0.47) node {$D_2^{\mathcal{Q}}$};
\end{scriptsize}
\end{tikzpicture}$$
where $\Mov(\mathcal{Q}(n,3))$ coincides with $\Nef(\mathcal{Q}(n,3))$ and is generated by $D_1^{\mathcal{Q}},D_2^{\mathcal{Q}},D_3^{\mathcal{Q}}$.
\end{Proposition}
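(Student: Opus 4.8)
The plan is to carry out the computation in $N^1(\mathcal{Q}(n,3))\cong\mathbb{R}^3$ with respect to the basis $H,E_1^{\mathcal{Q}},E_2^{\mathcal{Q}}$. By Proposition \ref{effnef} we have $\Eff(\mathcal{Q}(n,3))=\langle E_1^{\mathcal{Q}},E_2^{\mathcal{Q}},D_3^{\mathcal{Q}}\rangle$ and $\Nef(\mathcal{Q}(n,3))=\langle D_1^{\mathcal{Q}},D_2^{\mathcal{Q}},D_3^{\mathcal{Q}}\rangle$, while Lemma \ref{div_dec} gives the coordinates $D_1^{\mathcal{Q}}=(1,0,0)$, $D_2^{\mathcal{Q}}=(2,-1,0)$, $D_3^{\mathcal{Q}}=(3,-2,-1)$ together with the two relations $2D_1^{\mathcal{Q}}=E_1^{\mathcal{Q}}+D_2^{\mathcal{Q}}$ and $2D_2^{\mathcal{Q}}=D_1^{\mathcal{Q}}+D_3^{\mathcal{Q}}+E_2^{\mathcal{Q}}$. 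The first relation places $D_1^{\mathcal{Q}}$ on the segment from $E_1^{\mathcal{Q}}$ to $D_2^{\mathcal{Q}}$ and the second places $D_2^{\mathcal{Q}}$ in the interior of $\langle D_1^{\mathcal{Q}},D_3^{\mathcal{Q}},E_2^{\mathcal{Q}}\rangle$; these two incidences are exactly what produce the combinatorics of the displayed section.

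The heart of the argument is to show that $\Mov(\mathcal{Q}(n,3))=\Nef(\mathcal{Q}(n,3))$. Since $\mathcal{Q}(n,3)$ is a Mori dream space (Remark \ref{sphMDS}), this amounts to showing that no facet of $\Nef(\mathcal{Q}(n,3))$ interior to $\Eff(\mathcal{Q}(n,3))$ corresponds to a small contraction; and all three facets are interior, as one checks directly that $\Nef\cap\partial\Eff$ reduces to the ray $D_3^{\mathcal{Q}}$. I would identify, by duality, the extremal ray of $\NEbar(\mathcal{Q}(n,3))$ orthogonal to each facet, recorded through the intersection numbers $(H\cdot c,\,E_1^{\mathcal{Q}}\cdot c,\,E_2^{\mathcal{Q}}\cdot c)$: the facet $\langle D_1^{\mathcal{Q}},D_3^{\mathcal{Q}}\rangle$ is dual to a class with $E_1^{\mathcal{Q}}\cdot c<0$, while $\langle D_1^{\mathcal{Q}},D_2^{\mathcal{Q}}\rangle$ and $\langle D_2^{\mathcal{Q}},D_3^{\mathcal{Q}}\rangle$ are dual to classes with $E_2^{\mathcal{Q}}\cdot c<0$. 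Each such curve is then contained in, and (using the fibration structure of $E_i^{\mathcal{Q}}$ from Theorem \ref{comp}) sweeps out, one of the boundary divisors $E_i^{\mathcal{Q}}$, so every extremal contraction of the nef cone is divisorial. Hence there are no flipping walls and $\Mov=\Nef$.

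Granting $\Mov=\Nef$, the nef cone is a single maximal chamber and the remaining chambers are the Mori chambers of divisorial contractions, all realized by genuine morphisms. Crossing $\langle D_1^{\mathcal{Q}},D_3^{\mathcal{Q}}\rangle$ contracts $E_1^{\mathcal{Q}}$ and gives $\langle D_1^{\mathcal{Q}},D_3^{\mathcal{Q}},E_1^{\mathcal{Q}}\rangle$; crossing $\langle D_1^{\mathcal{Q}},D_2^{\mathcal{Q}}\rangle$ is the blow-down to $\sec_3^{(1)}(\mathcal{V}^n)$ and gives $\langle D_1^{\mathcal{Q}},D_2^{\mathcal{Q}},E_2^{\mathcal{Q}}\rangle$; and crossing $\langle D_2^{\mathcal{Q}},D_3^{\mathcal{Q}}\rangle$ is a second divisorial contraction of $E_2^{\mathcal{Q}}$ onto a small modification of $\sec_3^{(1)}(\mathcal{V}^n)$, giving $\langle D_2^{\mathcal{Q}},D_3^{\mathcal{Q}},E_2^{\mathcal{Q}}\rangle$. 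By the second relation above, these last two cones together with $\Nef$ tile $\langle D_1^{\mathcal{Q}},D_3^{\mathcal{Q}},E_2^{\mathcal{Q}}\rangle$ along the interior ray $D_2^{\mathcal{Q}}$. Finally, a further wall-crossing from both $\langle D_1^{\mathcal{Q}},D_3^{\mathcal{Q}},E_1^{\mathcal{Q}}\rangle$ and $\langle D_1^{\mathcal{Q}},D_2^{\mathcal{Q}},E_2^{\mathcal{Q}}\rangle$ contracts the remaining exceptional divisor, landing on $\sec_3(\mathcal{V}^n)$ with nef class $H=D_1^{\mathcal{Q}}$, and yielding the fifth chamber $\langle D_1^{\mathcal{Q}},E_1^{\mathcal{Q}},E_2^{\mathcal{Q}}\rangle$. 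Verifying that each chamber's remaining facets lie either on $\partial\Eff$ or against an already-constructed chamber closes the list at five and reproduces the figure; since $\Nef$ is the only chamber with no exceptional generator, this also re-confirms that $\Mov=\Nef$.

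I expect the main obstacle to be precisely the verification that $\Mov=\Nef$, that is, that the three extremal contractions of the nef cone are divisorial rather than small. One must rule out an upstairs flip while simultaneously accommodating the fact that $E_2^{\mathcal{Q}}$ carries two distinct divisorial contractions, the pair $\langle D_1^{\mathcal{Q}},D_2^{\mathcal{Q}},E_2^{\mathcal{Q}}\rangle$ and $\langle D_2^{\mathcal{Q}},D_3^{\mathcal{Q}},E_2^{\mathcal{Q}}\rangle$ being a flip downstairs across the wall $D_2^{\mathcal{Q}}$. A related pitfall is that the toric GKZ decomposition of Remark \ref{toric} is here strictly finer than the Mori chamber decomposition: the colors $D_i^{\mathcal{Q}}$ span multi-dimensional subspaces of $\Cox(\mathcal{Q}(n,3))$, unlike the single coordinate monomials of $\Cox(\mathcal{T}_X)$, so the spurious interior walls of the GKZ fan must be discarded by the divisoriality analysis above rather than read off mechanically.
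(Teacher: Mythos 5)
Your route is genuinely different from the paper's. The paper bounds the decomposition from above: by Remarks \ref{toric} and \ref{gen_cox} the degrees of the Cox ring generators are exactly $E_1^{\mathcal{Q}},E_2^{\mathcal{Q}},D_1^{\mathcal{Q}},D_2^{\mathcal{Q}},D_3^{\mathcal{Q}}$, so the GKZ fan they span refines the Mori chamber decomposition, and one then only has to show that no wall of that fan can be erased (the nef generators cannot be removed, two walls survive by convexity of chambers, and the wall $\langle E_2^{\mathcal{Q}},D_1^{\mathcal{Q}}\rangle$ survives because the stable base locus jumps from $E_2^{\mathcal{Q}}$ to $E_1^{\mathcal{Q}}\cup E_2^{\mathcal{Q}}$ across it). You instead build the decomposition from the nef cone outward. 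Your explicit computations are all correct: the classes from Lemma \ref{div_dec}, the relations $2D_1^{\mathcal{Q}}=E_1^{\mathcal{Q}}+D_2^{\mathcal{Q}}$ and $2D_2^{\mathcal{Q}}=D_1^{\mathcal{Q}}+D_3^{\mathcal{Q}}+E_2^{\mathcal{Q}}$, the dual curve classes to the three nef walls (with the signs of $E_i^{\mathcal{Q}}\cdot c$ as you state), and the combinatorial verification that the five cones tile $\Eff(\mathcal{Q}(n,3))$.

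The gap is exactly where you predicted it: the assertion that each extremal curve class dual to a nef wall ``sweeps out'' the boundary divisor $E_i^{\mathcal{Q}}$ containing it. Negativity of $E_i^{\mathcal{Q}}\cdot c$ only places the contracted locus \emph{inside} $E_i^{\mathcal{Q}}$; whether it equals $E_i^{\mathcal{Q}}$ (divisorial) or has codimension $\geq 2$ (small, forcing $\Mov\supsetneq\Nef$) is the entire content of the claim, and a vague appeal to ``the fibration structure of Theorem \ref{comp}'' does not settle it. For $\langle D_1^{\mathcal{Q}},D_2^{\mathcal{Q}}\rangle$ this is fine (the dual class is the fiber class of $E_2^{\mathcal{Q}}\rightarrow\sec_2^{(1)}(\mathcal{V}^n)$ and the contraction is the known blow-down to $\sec_3^{(1)}(\mathcal{V}^n)$), but for $\langle D_1^{\mathcal{Q}},D_3^{\mathcal{Q}}\rangle$, and especially for $\langle D_2^{\mathcal{Q}},D_3^{\mathcal{Q}}\rangle$ where the dual class satisfies $H\cdot c>0$ and so is not a fiber class of any of the obvious maps, you must actually exhibit a covering family of curves of that class in $E_1^{\mathcal{Q}}$, resp.\ $E_2^{\mathcal{Q}}$ (e.g.\ by restricting to the fibers $\mathcal{Q}(n-1,2)$ and $\mathcal{V}^{n-2}$ and computing). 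Separately, your closing caveat is off: the GKZ decomposition is \emph{not} strictly finer here. Multiple Cox generators in the same $\Pic$-degree contribute a single vector to $\mathcal{W}$, so the GKZ fan determined by $E_1^{\mathcal{Q}},E_2^{\mathcal{Q}},D_1^{\mathcal{Q}},D_2^{\mathcal{Q}},D_3^{\mathcal{Q}}$ is precisely the five-chamber fan in the statement; the paper's proof consists in showing the coarsening is trivial, and that upper bound is what lets it avoid the divisoriality analysis you would still need to supply.
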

\begin{proof}
By Theorem \ref{comp}, Proposition \ref{col_bou}, Remarks \ref{toric}, \ref{gen_cox}, and Lemma \ref{div_dec} the Mori chamber decomposition of $\Eff(\mathcal{Q}(n,3))$ is a possibly trivial coarsening of the decomposition in the
statement. 

Since by Proposition \ref{effnef} $D_1^{\mathcal{Q}},D_2^{\mathcal{Q}},D_3^{\mathcal{Q}}$  are the generators of the nef cone of $\mathcal{Q}(n,3)$, these rays can not be removed. Furthermore, since Mori chamber are convex the walls between $E_{2}^{\mathcal{Q}},D_2^{\mathcal{Q}}$ and $E_{1}^{\mathcal{Q}},D_1^{\mathcal{Q}}$ can not be removed. Finally, to see that the wall between $E_{2}^{\mathcal{Q}},D_1^{\mathcal{Q}}$ can not be removed it is enough to observe that the stable base locus of a divisor in the chamber delimited by $E_{2}^{\mathcal{Q}},D_2^{\mathcal{Q}},D_1^{\mathcal{Q}}$ is $E_2^{\mathcal{Q}}$, while the stable base locus of a divisor in the chamber delimited by $E_{2}^{\mathcal{Q}},D_1^{\mathcal{Q}},E_1^{\mathcal{Q}}$ is $E_1^{\mathcal{Q}}\cup E_2^{\mathcal{Q}}$.
\end{proof}

We will study the decomposition of the effective cone of $\mathcal{C}(n,m,2)$. For $n=m=1$ we have $\mathcal{C}(1,1,2)\cong\mathbb{P}^
3$. Hence, the first interesting cases occur for $n=1$ and $m > 1$. The case $\mathcal{C}(1,m,2)$ is in \cite[Page 1606]{Ma18a}.

\begin{Proposition}\label{mcd_C}
For $n >1$ and $m >1$ the Mori chamber decomposition of $\Eff(\mathcal{C}(n,m,2))$ has three chambers as displayed in the following $2$-dimensional section of $\Eff(\mathcal{C}(n,m,2))$
$$\definecolor{uuuuuu}{rgb}{0.26666666666666666,0.26666666666666666,0.26666666666666666}
\begin{tikzpicture}[xscale=4,yscale=3][line cap=round,line join=round,>=triangle 45,x=1cm,y=1cm]
\clip(-1.7300693940731935,-0.10) rectangle (2.1061218447714393,1.2);
\fill[line width=0.4pt,fill=black,fill opacity=0.10000000149011612] (-1,0) -- (0,0.3333333333333333) -- (1,0) -- cycle;
\draw [line width=0.1pt] (-1,0)-- (0,1);
\draw [line width=0.1pt] (0,1)-- (1,0);
\draw [line width=0.1pt] (1,0)-- (-1,0);
\draw [line width=0.1pt] (-1,0)-- (0,0.3333333333333333);
\draw [line width=0.1pt] (0,0.3333333333333333)-- (1,0);
\draw [line width=0.1pt] (0,0.3333333333333333)-- (0,1);
\begin{scriptsize}
\draw[color=black] (1.0495917076549072,0.05) node {$H_2^{\mathcal{C}}$};
\draw[color=black] (-1.0499914661205724,0.05) node {$H_1^{\mathcal{C}}$};
\draw[color=black] (0.053261729358672814,1.05) node {$E_1^{\mathcal{C}}$};
\draw[color=black] (0.07756246053662977,0.4) node {$D_1^{\mathcal{C}}$};
\end{scriptsize}
\end{tikzpicture}$$
where $\Mov(\mathcal{C}(n,m,2))$ coincides with $\Nef(\mathcal{C}(n,m,2))$ and is generated by $H_1^{\mathcal{C}},H_2^{\mathcal{C}},D_1^{\mathcal{C}}$.
\end{Proposition}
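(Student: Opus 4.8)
The plan is to follow the template of the proof of Proposition \ref{mcdq3}: first obtain, through the toric/GKZ machinery, a decomposition of $\Eff(\mathcal{C}(n,m,2))$ that refines the Mori chamber decomposition, and then exclude every possible coarsening. Throughout I use that, since $n,m>1$, Proposition \ref{picrank} gives $\rho(\mathcal{C}(n,m,2))=3$, Proposition \ref{effnef} gives $\Eff(\mathcal{C}(n,m,2))=\langle E_1^{\mathcal{C}},H_1^{\mathcal{C}},H_2^{\mathcal{C}}\rangle$ and $\Nef(\mathcal{C}(n,m,2))=\langle D_1^{\mathcal{C}},H_1^{\mathcal{C}},H_2^{\mathcal{C}}\rangle$, and Corollary \ref{Dksplit} guarantees that $D_2^{\mathcal{C}}$ genuinely splits as $H_1^{\mathcal{C}}+H_2^{\mathcal{C}}$ with $H_1^{\mathcal{C}}\neq H_2^{\mathcal{C}}$ (this is exactly where the hypothesis $n,m>1$ is needed). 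Since these four simplicial generators already span a rank $3$ cone, the only freedom is the location of $D_1^{\mathcal{C}}$ relative to the extremal rays.

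The first step is to fix that location in $N^1(\mathcal{C}(n,m,2))\cong\mathbb{R}^3$. By a test-curve argument as in Lemma \ref{div_dec} I would show $D_1^{\mathcal{C}}\sim H$ (the pullback of the hyperplane class, because $\{z_{0,0}=0\}$ does not contain $\mathcal{S}^{n,m}$) and $D_2^{\mathcal{C}}\sim 2H-E_1^{\mathcal{C}}$ (the $2\times 2$ minor is a quadric vanishing with multiplicity one along $\mathcal{S}^{n,m}$). Together with $D_2^{\mathcal{C}}=H_1^{\mathcal{C}}+H_2^{\mathcal{C}}$ this yields the single relation $2D_1^{\mathcal{C}}\sim H_1^{\mathcal{C}}+H_2^{\mathcal{C}}+E_1^{\mathcal{C}}$, so $D_1^{\mathcal{C}}$ lies in the interior of the triangle spanned by $H_1^{\mathcal{C}},H_2^{\mathcal{C}},E_1^{\mathcal{C}}$, exactly as drawn. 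By Theorem \ref{comp} and Proposition \ref{col_bou} the boundary divisor is $E_1^{\mathcal{C}}$ and the colors are $H_1^{\mathcal{C}},H_2^{\mathcal{C}},D_1^{\mathcal{C}}$, so by Remark \ref{gen_cox} the Cox ring is generated in these four classes, and by Remark \ref{toric} the GKZ decomposition attached to $\mathcal{W}=\{E_1^{\mathcal{C}},H_1^{\mathcal{C}},H_2^{\mathcal{C}},D_1^{\mathcal{C}}\}$ refines the Mori chamber decomposition. Because $D_1^{\mathcal{C}}$ is interior, this GKZ fan consists of precisely the three maximal cones $\langle H_1^{\mathcal{C}},H_2^{\mathcal{C}},D_1^{\mathcal{C}}\rangle$, $\langle H_1^{\mathcal{C}},E_1^{\mathcal{C}},D_1^{\mathcal{C}}\rangle$, $\langle H_2^{\mathcal{C}},E_1^{\mathcal{C}},D_1^{\mathcal{C}}\rangle$ of the figure, and the Mori chamber decomposition is a coarsening of it.

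It then remains to rule out any coarsening. The chamber $\langle D_1^{\mathcal{C}},H_1^{\mathcal{C}},H_2^{\mathcal{C}}\rangle$ is forced: it equals $\Nef(\mathcal{C}(n,m,2))$ by Proposition \ref{effnef}, and by Definition \ref{MCD} the nef cone is always the maximal chamber attached to the identity contraction, hence cannot be enlarged. This pins down the two interior walls $\langle D_1^{\mathcal{C}},H_1^{\mathcal{C}}\rangle$ and $\langle D_1^{\mathcal{C}},H_2^{\mathcal{C}}\rangle$. The complement of the nef chamber inside $\Eff(\mathcal{C}(n,m,2))$ is the union of the other two GKZ cones; since $D_1^{\mathcal{C}}$ is an \emph{interior} point of $\Eff(\mathcal{C}(n,m,2))$, this union is non-convex, so it cannot be a single Mori chamber and must split along $\langle D_1^{\mathcal{C}},E_1^{\mathcal{C}}\rangle$. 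This produces exactly the three chambers of the statement. Finally, each outer chamber has the boundary divisor $E_1^{\mathcal{C}}$ as an extremal ray, so the associated birational contraction is divisorial (it contracts $E_1^{\mathcal{C}}$), and for a class in its interior the stable base locus $\textbf{B}(\cdot)$ contains $E_1^{\mathcal{C}}$ and the class is not movable; hence the identity is the only small $\mathbb{Q}$-factorial modification and $\overline{\Mov(\mathcal{C}(n,m,2))}=\Nef(\mathcal{C}(n,m,2))=\langle D_1^{\mathcal{C}},H_1^{\mathcal{C}},H_2^{\mathcal{C}}\rangle$.

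The main obstacle is the separation of the two outer chambers $\langle H_1^{\mathcal{C}},E_1^{\mathcal{C}},D_1^{\mathcal{C}}\rangle$ and $\langle H_2^{\mathcal{C}},E_1^{\mathcal{C}},D_1^{\mathcal{C}}\rangle$. Unlike the last wall in Proposition \ref{mcdq3}, here the stable base locus equals $E_1^{\mathcal{C}}$ on both sides and so cannot distinguish them; convexity alone is likewise insufficient because the whole effective cone is itself convex, so the trivial single-chamber coarsening is not immediately excluded. The argument must therefore genuinely combine two facts: that the nef cone is a fixed maximal chamber (fixing the lower triangle), and that its complement is non-convex (forcing the remaining split). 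Verifying the two multiplicity computations underlying the relation $2D_1^{\mathcal{C}}\sim H_1^{\mathcal{C}}+H_2^{\mathcal{C}}+E_1^{\mathcal{C}}$ is the only computational input.
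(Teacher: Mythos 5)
Your argument is correct and follows essentially the same route as the paper: the paper's proof also reduces to the GKZ refinement coming from the boundary divisor $E_1^{\mathcal{C}}$ and the colors $H_1^{\mathcal{C}},H_2^{\mathcal{C}},D_1^{\mathcal{C}}$ (via the analogues of Lemma \ref{div_dec} and Remarks \ref{toric}, \ref{gen_cox}), and then rules out the only possible coarsening by observing that Mori chambers are convex, which is exactly your non-convexity argument for the wall $\langle E_1^{\mathcal{C}},D_1^{\mathcal{C}}\rangle$. Your write-up simply makes explicit the class computations $D_1^{\mathcal{C}}\sim H$, $D_2^{\mathcal{C}}\sim 2H-E_1^{\mathcal{C}}$ and the role of the nef cone as a forced chamber, which the paper leaves implicit in its reference to Proposition \ref{mcdq3}.
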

\begin{proof}
It is enough to argue as in the proof of Proposition \ref{mcdq3}, and to observe that since Mori chambers are convex in the case $n>1,m>1$ the wall between $E_1^{\mathcal{C}},D_1^{\mathcal{C}}$ can not be removed.
\end{proof}

In the following we consider the spherical variety $\sec_4^{(2)}(\mathcal{V}^n)$ obtained by blowing-up $\sec_4(\mathcal{V}^n)$ along $\mathcal{V}^n$ and then along the strict transform of $\sec_2(\mathcal{V}^n)$. We will keep denoting by $D_i^{\mathcal{Q}}, E_j^{\mathcal{Q}}$ the push-forward of the corresponding divisors via the blow-down $\mathcal{Q}(n,4)\rightarrow\sec_4^{(2)}(\mathcal{V}^n)$. 

\begin{Proposition}\label{MCD_G}
The Mori chamber decomposition of $\Eff(\sec_4^{(2)}(\mathcal{V}^n))$ has nine chambers as displayed in the following $2$-dimensional section of $\Eff(\sec_4^{(2)}(\mathcal{V}^n))$ 
$$
\begin{tikzpicture}[xscale=0.4,yscale=0.7][line cap=round,line join=round,>=triangle 45,x=1cm,y=1cm]\clip(-14.9,-0.21) rectangle (14.5,6.55);\fill[line width=0pt,fill=black,fill opacity=0.15] (-5.000432432432432,2.4614054054054053) -- (5.000432432432432,2.4614054054054053) -- (0,4) -- cycle;\fill[line width=0pt,color=wwwwww,fill=white,fill opacity=0.15] (-5.000432432432432,2.4614054054054053) -- (5.000432432432432,2.4614054054054053) -- (0,1.7776389756402244) -- cycle;\draw [line width=0.1pt] (-13,0)-- (13,0);\draw [line width=0.1pt] (13,0)-- (0,6);\draw [line width=0.1pt] (0,6)-- (-13,0);\draw [line width=0.1pt] (0,4)-- (-13,0);\draw [line width=0.1pt] (0,4)-- (13,0);\draw [line width=0.1pt] (-5.000432432432432,2.4614054054054053)-- (13,0);\draw [line width=0.1pt] (5.000432432432432,2.4614054054054053)-- (-13,0);\draw [line width=0.1pt] (-5.000432432432432,2.4614054054054053)-- (5.000432432432432,2.4614054054054053);\draw [line width=0.1pt] (-5.000432432432432,2.4614054054054053)-- (0,6);\draw [line width=0.1pt] (0,6)-- (5.000432432432432,2.4614054054054053);\draw [line width=0.1pt] (0,4)-- (0,6);\begin{scriptsize}\draw [fill=black] (-13,0) circle (0pt);\draw[color=black] (-13.7,0.3) node {$E_1^{\mathcal{Q}}$};\draw [fill=black] (13,0) circle (0pt);\draw[color=black] (13.6,0.3) node {$D_4^{\mathcal{Q}}$};\draw [fill=black] (0,6) circle (0pt);\draw[color=black] (0.18536585365853658,6.21) node {$E_2^{\mathcal{Q}}$};\draw [fill=black] (0,4) circle (0pt);\draw[color=black] (0.95,4.2) node {$D_2^{\mathcal{Q}}$};\draw [fill=black] (-5.000432432432432,2.4614054054054053) circle (0pt);\draw[color=black] (-5.95,2.7) node {$D_1^{\mathcal{Q}}$};\draw [fill=black] (5.000432432432432,2.4614054054054053) circle (0pt);\draw[color=black] (5.75,2.7) node {$D_3^{\mathcal{Q}}$};\draw [fill=uuuuuu] (0,1.7776389756402244) circle (0pt);\draw[color=uuuuuu] (0.18536585365853658,1.4) node {$P$};\end{scriptsize}\end{tikzpicture}
$$
where $\Nef(\sec_4^{(2)}(\mathcal{V}^n))$ is generated by $D_1^{\mathcal{Q}},D_2^{\mathcal{Q}},D_3^{\mathcal{Q}}$, and the movable cone $\Mov(\sec_4^{(2)}(\mathcal{V}^n))$ is generated by $D_1^{\mathcal{Q}},D_2^{\mathcal{Q}},D_3^{\mathcal{Q}},P$ with $P \sim 6 D_1^{\mathcal{Q}}-3E_1^{\mathcal{Q}}-2E_2^{\mathcal{Q}}$.
\end{Proposition}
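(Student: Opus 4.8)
The plan is to adapt the strategy of Proposition \ref{mcdq3}: produce, via the Cox ring and toric (GKZ) machinery, a decomposition refining the Mori chamber decomposition that coincides with the displayed nine-chamber picture, and then rule out every coarsening. First I would fix the basis $H,E_1^{\mathcal{Q}},E_2^{\mathcal{Q}}$ of $N^1(\sec_4^{(2)}(\mathcal{V}^n))$, with $H$ the pullback of the hyperplane class, and record the color classes. Extending Lemma \ref{div_dec} by the same test-curve argument — equivalently, using Proposition \ref{tcones} to read off that the top-left $i\times i$ determinant vanishes to order $i-j$ along $\sec_j(\mathcal{V}^n)$ — gives
\[
D_1^{\mathcal{Q}}\sim H,\ \ D_2^{\mathcal{Q}}\sim 2H-E_1^{\mathcal{Q}},\ \ D_3^{\mathcal{Q}}\sim 3H-2E_1^{\mathcal{Q}}-E_2^{\mathcal{Q}},\ \ D_4^{\mathcal{Q}}\sim 4H-3E_1^{\mathcal{Q}}-2E_2^{\mathcal{Q}}.
\]
Pushing $\Eff(\mathcal{Q}(n,4))=\langle E_1^{\mathcal{Q}},E_2^{\mathcal{Q}},E_3^{\mathcal{Q}},D_4^{\mathcal{Q}}\rangle$ (Proposition \ref{effnef}) forward along the divisorial contraction $\mathcal{Q}(n,4)\to\sec_4^{(2)}(\mathcal{V}^n)$ of $E_3^{\mathcal{Q}}$ yields $\Eff(\sec_4^{(2)}(\mathcal{V}^n))=\langle E_1^{\mathcal{Q}},E_2^{\mathcal{Q}},D_4^{\mathcal{Q}}\rangle$, the outer triangle of the figure.

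Since $\sec_4^{(2)}(\mathcal{V}^n)$ is spherical with boundary divisors $E_1^{\mathcal{Q}},E_2^{\mathcal{Q}}$ and colors $D_1^{\mathcal{Q}},\dots,D_4^{\mathcal{Q}}$ (blowing up the $\mathscr{G}$-invariant centers leaves the colors unchanged), Remark \ref{gen_cox} identifies the weights generating $\Cox(\mathcal{T}_X)$ with $[E_1^{\mathcal{Q}}],[E_2^{\mathcal{Q}}],[D_1^{\mathcal{Q}}],\dots,[D_4^{\mathcal{Q}}]$, and by Remark \ref{toric} the GKZ decomposition of these weights refines the Mori chamber decomposition. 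I would then compute that GKZ decomposition directly. The mechanism producing the interior vertex is the identity $P=2D_1^{\mathcal{Q}}+D_4^{\mathcal{Q}}=2D_3^{\mathcal{Q}}+E_1^{\mathcal{Q}}$, so that $P\sim 6H-3E_1^{\mathcal{Q}}-2E_2^{\mathcal{Q}}$ is exactly the intersection of the two GKZ walls $\langle D_1^{\mathcal{Q}},D_4^{\mathcal{Q}}\rangle$ and $\langle D_3^{\mathcal{Q}},E_1^{\mathcal{Q}}\rangle$; combined with the collinearities $E_1^{\mathcal{Q}}=2D_1^{\mathcal{Q}}-D_2^{\mathcal{Q}}$ and $D_4^{\mathcal{Q}}=2D_3^{\mathcal{Q}}-D_2^{\mathcal{Q}}$ (which put $D_1^{\mathcal{Q}}$ on $\langle E_1^{\mathcal{Q}},D_2^{\mathcal{Q}}\rangle$ and $D_3^{\mathcal{Q}}$ on $\langle D_2^{\mathcal{Q}},D_4^{\mathcal{Q}}\rangle$), these account for all the walls and yield precisely the nine cones drawn.

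It then remains to show no wall can be removed, exactly as in Proposition \ref{mcdq3}. The classes $D_1^{\mathcal{Q}},D_2^{\mathcal{Q}},D_3^{\mathcal{Q}}$ carry no $E_3^{\mathcal{Q}}$-term, hence are pulled back along $\mathcal{Q}(n,4)\to\sec_4^{(2)}(\mathcal{V}^n)$; being nef on $\mathcal{Q}(n,4)$ by Proposition \ref{effnef} they descend to nef classes, whereas $D_4^{\mathcal{Q}}=2D_3^{\mathcal{Q}}-D_2^{\mathcal{Q}}$ is not nef. Thus $\Nef(\sec_4^{(2)}(\mathcal{V}^n))=\langle D_1^{\mathcal{Q}},D_2^{\mathcal{Q}},D_3^{\mathcal{Q}}\rangle$ is a genuine chamber and its three walls survive; convexity of the chambers forces further walls (for instance $\langle D_2^{\mathcal{Q}},E_2^{\mathcal{Q}}\rangle$, since merging the two chambers above $D_2^{\mathcal{Q}}$ would be non-convex); and the remaining walls — those bounding the flip triangle $\langle D_1^{\mathcal{Q}},D_3^{\mathcal{Q}},P\rangle$ and separating the outer divisorial chambers — are detected by computing that the stable base loci of divisors in adjacent chambers differ, as in the final step of Proposition \ref{mcdq3}. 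Because the modification across $\langle D_1^{\mathcal{Q}},D_3^{\mathcal{Q}}\rangle$ is an isomorphism in codimension one onto a second small model, the flip triangle is a nef chamber distinct from the nef cone, so $\Mov(\sec_4^{(2)}(\mathcal{V}^n))$ is their union $\langle D_1^{\mathcal{Q}},D_2^{\mathcal{Q}},D_3^{\mathcal{Q}},P\rangle$; beyond its facets $\langle D_1^{\mathcal{Q}},P\rangle$ and $\langle D_3^{\mathcal{Q}},P\rangle$ a divisor gets contracted and is no longer movable.

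The main obstacle is this last step: showing the decomposition is not a proper coarsening. The GKZ computation is essentially combinatorial, but one must carry out enough stable base locus calculations to separate all seven outer chambers and, crucially, to prove that crossing $\langle D_1^{\mathcal{Q}},D_3^{\mathcal{Q}}\rangle$ is a genuine small flip rather than a fake wall, and that the movable cone terminates exactly at $P$ — that is, that $P$ is extremal in $\Mov$ and that nothing lies between the facets $\langle D_1^{\mathcal{Q}},P\rangle$, $\langle D_3^{\mathcal{Q}},P\rangle$ and the effective boundary. Pinning down this movable boundary is where the real work lies.
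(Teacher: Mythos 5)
Your proposal is correct and follows essentially the same route as the paper: identify the boundary divisors $E_1^{\mathcal{Q}},E_2^{\mathcal{Q}}$ and colors $D_1^{\mathcal{Q}},\dots,D_4^{\mathcal{Q}}$ via equivariance of the blow-down $\mathcal{Q}(n,4)\to\sec_4^{(2)}(\mathcal{V}^n)$, compute $D_4^{\mathcal{Q}}\sim 4H-3E_1^{\mathcal{Q}}-2E_2^{\mathcal{Q}}$ by the test-curve argument of Lemma \ref{div_dec}, obtain the displayed decomposition as the GKZ refinement coming from Remarks \ref{toric} and \ref{gen_cox}, and rule out coarsenings as in Proposition \ref{mcdq3}. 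The one step you handle differently is the movable cone: you propose to verify directly that the wall $\langle D_1^{\mathcal{Q}},D_3^{\mathcal{Q}}\rangle$ is a genuine small modification and that $\Mov$ terminates at $P$, and you rightly flag this as the delicate point; the paper instead reads off $\Mov(\sec_4^{(2)}(\mathcal{V}^n))=\langle D_1^{\mathcal{Q}},D_2^{\mathcal{Q}},D_3^{\mathcal{Q}},P\rangle$ combinatorially from the degrees of the Cox ring generators via \cite[Proposition 3.3.2.3]{ADHL15}, which sidesteps exactly the computation you were worried about.
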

\begin{proof}
Note that the $SL(n+1)$-actions on $\sec_4^{(2)}(\mathcal{V}^n)$ and $\mathcal{Q}(n,4)$ are equivariant with respect to the blow-down morphism $\mathcal{Q}(n,4)\rightarrow\sec_4^{(2)}(\mathcal{V}^n)$. Hence, by Proposition \ref{col_bou} the colors of $\sec_4^{(2)}(\mathcal{V}^n)$ are $D_1^{\mathcal{Q}},D_2^{\mathcal{Q}},D_3^{\mathcal{Q}},D_4^{\mathcal{Q}}$, and its boundary divisors are $E_1^{\mathcal{Q}},E_2^{\mathcal{Q}}$. Arguing as in the proof of Lemma \ref{div_dec} we have that $D_4^{\mathcal{Q}}\sim 4H-3E_1^{\mathcal{Q}}-2E_2^{\mathcal{Q}}$. Note that $D_4^{\mathcal{Q}}$ is also a boundary divisor when $n = 3$. Now, the claim on the movable cone follows from Remark \ref{gen_cox} and \cite[Proposition 3.3.2.3]{ADHL15}. Finally, to conclude it is enough to argue as in the proof of Proposition \ref{mcdq3}.
\end{proof}

We conclude this section by computing the automorphism groups of the varieties $\sec_h^{(k)}(\mathcal{S}^{n,m})$ and $\sec_h^{(k)}(\mathcal{V}^{n})$.

\begin{Proposition}\label{aut_sec}
For all $h\leq n$ we have 
$$
\Aut(\sec_h(\mathcal{S}^{n,m})) \cong
\left\lbrace\begin{array}{ll}
PGL(n+1)\times PGL(m+1) & \text{if n}< \textit{m};\\ 
S_2 \ltimes (PGL(n+1)\times PGL(n+1)) & \text{if n} = \textit{m};
\end{array}\right.
$$
and $\Aut(\sec_h(\mathcal{V}^{n}))\cong  PGL(n+1)$.
\end{Proposition}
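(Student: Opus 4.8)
The plan is to reduce the computation to the classical descriptions of $\Aut(\mathbb{P}^n\times\mathbb{P}^m)$ and $\Aut(\mathbb{P}^n)$ by exhibiting two mutually inverse constructions. On one side, every automorphism of $\mathbb{P}^n\times\mathbb{P}^m$ — that is $PGL(n+1)\times PGL(m+1)$, together with the transposition $Z\mapsto Z^{t}$ when $n=m$ — is realized on the space of matrices by $Z\mapsto AZB^{t}$ (respectively $Z\mapsto Z^{t}$); these maps are linear and rank-preserving, hence preserve each $\sec_h(\mathcal{S}^{n,m})$. This gives an injective homomorphism $\alpha\colon \Aut(\mathbb{P}^n\times\mathbb{P}^m)\hookrightarrow \Aut(\sec_h(\mathcal{S}^{n,m}))$. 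On the other side I would construct a restriction homomorphism $\beta$ in the opposite direction and then check $\beta\circ\alpha=\mathrm{id}$ together with the injectivity of $\beta$, which forces $\alpha$ to be an isomorphism. Since $h\leq n$ gives $h<n+1\leq m+1$, we are always in the proper determinantal range of Corollary~\ref{Dksplit}; the case $h=1$ is just $\sec_1(\mathcal{S}^{n,m})=\mathcal{S}^{n,m}\cong\mathbb{P}^n\times\mathbb{P}^m$, so I may assume $h\geq 2$.

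To build $\beta$, I would first note that any $\phi\in\Aut(\sec_h(\mathcal{S}^{n,m}))$ preserves the singular stratification. Indeed, Proposition~\ref{tcones} computes the tangent cone of $\sec_h(\mathcal{S}^{n,m})$ along $\sec_k\setminus\sec_{k-1}$, from which $\Sing(\sec_h(\mathcal{S}^{n,m}))=\sec_{h-1}(\mathcal{S}^{n,m})$; iterating the singular locus operator recovers the intrinsic chain $\mathcal{S}^{n,m}=\sec_1\subset\dots\subset\sec_h$. Hence $\phi$ preserves the minimal stratum $\sec_1(\mathcal{S}^{n,m})=\mathcal{S}^{n,m}$ and restricts to $\beta(\phi)\in\Aut(\mathcal{S}^{n,m})=\Aut(\mathbb{P}^n\times\mathbb{P}^m)$. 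By construction $\beta\circ\alpha=\mathrm{id}$, so it remains only to prove that $\beta$ is injective, namely that an automorphism fixing $\mathcal{S}^{n,m}$ pointwise is the identity.

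This last point is the crux, and I would derive it from the stronger assertion that every $\phi$ is induced by a linear automorphism of $\mathbb{P}^N$: once $\phi$ is linear, fixing the linearly nondegenerate $\mathcal{S}^{n,m}$ (which contains a projective frame of $\mathbb{P}^N$) forces $\phi=\mathrm{id}$. To get linearity it suffices to show $\phi^{*}\mathcal{O}_{\sec_h}(1)\cong\mathcal{O}_{\sec_h}(1)$, since $\sec_h(\mathcal{S}^{n,m})$ is projectively normal and nondegenerate, so that $H^0(\sec_h(\mathcal{S}^{n,m}),\mathcal{O}(1))$ has dimension $N+1$ and $\phi$ acts linearly on it. For this I would compute the class group: using the fibration of the smooth locus over $\Grass(h,n+1)\times\Grass(h,m+1)$ by column and row spaces (the complement $\sec_{h-1}$ having codimension $\geq 2$), one gets $\Cl(\sec_h(\mathcal{S}^{n,m}))\cong\mathbb{Z}^2$ generated by the two determinantal divisors $H_1,H_2$ of Corollary~\ref{Dksplit}, with $H_1+H_2\sim h\,\mathcal{O}(1)$, so that $\mathcal{O}(1)$ is symmetric in $H_1$ and $H_2$. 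As $\phi$ can only fix or interchange the two extremal rays of $\Eff$ spanned by $H_1,H_2$, it fixes the symmetric polarization $\mathcal{O}(1)$, whence linearity. I expect this to be the main obstacle: establishing the class group and the intrinsic characterization of $H_1,H_2$ (so that $\phi$ is forced to permute them) is the delicate computational heart of the argument.

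Assembling the pieces yields $\Aut(\sec_h(\mathcal{S}^{n,m}))\cong\Aut(\mathbb{P}^n\times\mathbb{P}^m)$, which is $PGL(n+1)\times PGL(m+1)$ for $n<m$ and $S_2\ltimes(PGL(n+1)\times PGL(n+1))$ for $n=m$, the $S_2$ being realized by the transposition that swaps $H_1$ and $H_2$. The symmetric case $\sec_h(\mathcal{V}^{n})$ runs in parallel, with symmetric matrices and the action $Z\mapsto AZA^{t}$: here the row and column loci coincide, so the analogous class group is $\Cl(\sec_h(\mathcal{V}^{n}))\cong\mathbb{Z}$ and linearity is immediate, since any automorphism must fix the unique ample generator. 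Combined with $\Aut(\mathcal{V}^{n})=\Aut(\mathbb{P}^n)=PGL(n+1)$ and the absence of a surviving transposition factor, this gives $\Aut(\sec_h(\mathcal{V}^{n}))\cong PGL(n+1)$.
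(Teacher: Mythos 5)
Your proposal is correct in outline and reaches the same reduction to $\Aut(\mathcal{S}^{n,m})$ and $\Aut(\mathcal{V}^n)$ as the paper (both proofs begin by iterating the singular-locus operator, via Proposition \ref{tcones}, to see that any automorphism preserves the chain $\mathcal{S}^{n,m}\subset\dots\subset\sec_h(\mathcal{S}^{n,m})$ and hence restricts to the minimal stratum), but the key injectivity step is handled by a genuinely different mechanism. The paper argues by induction on $h$: it uses $\sec_h(\mathcal{S}^{n,m})=\Jn(\sec_{h-1}(\mathcal{S}^{n,m}),\mathcal{S}^{n,m})$ together with the defectivity of this join (Remark \ref{sec_ver}) to produce infinitely many lines through a general point $p$ joining $\mathcal{S}^{n,m}$ to $\sec_{h-1}(\mathcal{S}^{n,m})$; an automorphism restricting to the identity on $\sec_{h-1}$ stabilizes two such lines and therefore fixes $p$. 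You instead prove the stronger statement that every automorphism of $\sec_h(\mathcal{S}^{n,m})$ is induced by a linear automorphism of $\mathbb{P}^N$, by showing $\phi^{*}\mathcal{O}(1)\cong\mathcal{O}(1)$ (via the class group, the extremal position of $H_1,H_2$ in $\Eff$, and the relation $H_1+H_2\sim hH$) and invoking linear normality. Your route is longer and its ``delicate heart'' (the class-group computation and the intrinsic characterization of $H_1,H_2$) is real work, essentially the content of Bruns--Vetter for determinantal rings; but it buys an explicit linearization of $\Aut(\sec_h(\mathcal{S}^{n,m}))$, which in particular makes rigorous the step where the paper implicitly needs $\phi$ to carry secant lines to secant lines. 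The paper's argument is shorter and avoids any divisor-class considerations, at the cost of leaning on that geometric point.

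Two small corrections. First, your claim $\Cl(\sec_h(\mathcal{V}^{n}))\cong\mathbb{Z}$ is only right for $h$ odd: since the open orbit is exactly the smooth locus, $\Cl(\sec_h(\mathcal{V}^{n}))=\Pic(\mathcal{Q}(n,h)^{o})$, which by the paper's Proposition \ref{picg} is $\mathbb{Z}\oplus\mathbb{Z}/2\mathbb{Z}$ when $h<n+1$ is even. This does not break your argument --- the free part still has rank one, and it is cleaner in both the symmetric and the Segre case to let $\phi^{*}$ act on $\Pic$ rather than $\Cl$, where preservation of ampleness pins down $\mathcal{O}(1)$ directly --- but the statement as written is inaccurate. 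Second, in the Segre case you should note that $H_1$ and $H_2$ are primitive classes (which follows from the same class-group computation), so that an automorphism interchanging their rays actually interchanges the classes themselves; otherwise the conclusion $\phi^{*}(H_1+H_2)=H_1+H_2$ does not follow from permuting rays alone.
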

\begin{proof}
Let $\phi$ be an automorphism of $\sec_h(\mathcal{S}^{n,m})$. By the stratification of the singular locus of $\sec_h(\mathcal{S}^{n,m})$ described in Proposition \ref{tcones} $\phi$ must stabilize $\sec_k(\mathcal{S}^{n,m})$ for all $k\leq h$. In particular, $\phi$ induces an automorphism $\phi_{|\mathcal{S}^{n,m}}\in \Aut(\mathcal{S}^{n,m})$, and by \cite[Lemma 7.4]{Ma18a} we have that $\Aut(\mathcal{S}^{n,m})\cong PGL(n+1)\times PGL(m+1)$ if $n < m$, and $\Aut(\mathcal{S}^{n,n})\cong S_2 \ltimes (PGL(n+1)\times PGL(n+1))$. 

Note that in the case $n = m$ also the involution in $S_2$ switching the two factors comes from an automorphism of the ambient projective space $\mathbb{P}^N$ and so it induces an automorphism of $\sec_h(\mathcal{S}^{n,m})$. Let us proceed by induction on $h$. So 
$\Aut(\sec_{h-1}(\mathcal{S}^{n,m}))\cong \Aut(\mathcal{S}^{n,m})$, and we have a surjective morphism of groups 
$$
\begin{array}{cccc}
\chi: &\Aut(\sec_h(\mathcal{S}^{n,m}))& \longrightarrow & \Aut(\sec_{h-1}(\mathcal{S}^{n,m}))\\
      & \phi & \longmapsto & \phi_{|\sec_{h-1}(\mathcal{S}^{n,m})}.
\end{array}
$$
Recall that $\sec_h(\mathcal{S}^{n,m}) = \Jn(\sec_{h-1}(\mathcal{S}^{n,m}), \mathcal{S}^{n,m})$. Assume that $\phi_{|\sec_{h-1}(\mathcal{S}^{n,m})} = Id_{\sec_{h-1}(\mathcal{S}^{n,m})}$. Then $\phi_{|\sec_{h-1}(\mathcal{S}^{n,m})}$ fixes $\sec_{h-1}(\mathcal{S}^{n,m})$ and hence $\mathcal{S}^{n,m}$. Let $p\in \sec_h(\mathcal{S}^{n,m})$ be a general point. By Remark \ref{sec_ver} the actual dimension of $\Jn(\sec_{h-1}(\mathcal{S}^{n,m}), \mathcal{S}^{n,m})$ is smaller than the expected one. So there are infinitely many lines intersecting $\mathcal{S}^{n,m}$ and $\sec_{h-1}(\mathcal{S}^{n,m})$ through $p$. Any two of these lines are stabilized by $\phi$ and intersect at $p$, so $\phi(p) = p$. Hence $\phi = Id_{\sec_h(\mathcal{S}^{n,m})}$ and $\chi$ is an isomorphism. The same proof, with the obvious variations, works in the symmetric case as well.
\end{proof}

\begin{thm}\label{autgr}
For all $h\leq n$ and $k = 1,\dots, h-1$ we have
$$
\Aut(\sec_h^{(k)}(\mathcal{S}^{n,m})) \cong
\left\lbrace\begin{array}{ll}
PGL(n+1)\times PGL(m+1) & \text{if n}< \textit{m};\\ 
S_2 \ltimes (PGL(n+1)\times PGL(n+1)) & \text{if n} = \textit{m};
\end{array}\right.
$$
$$\Aut(\sec_h^{(k)}(\mathcal{V}^{n}))\cong  PGL(n+1);$$
and for $h = n+1$ we have
$$
\Aut(\mathcal{C}(n,m,n+1)) \cong
\left\lbrace\begin{array}{ll}
PGL(n+1)\times PGL(m+1) & \text{if n}< \textit{m};\\ 
(S_2 \ltimes (PGL(n+1)\times PGL(n+1)))\rtimes S_2 & \text{if n} = \textit{m}\geq 2;
\end{array}\right.
$$
$$
\Aut(\mathcal{Q}(n,n+1))\cong PGL(n+1)\rtimes S_2;
$$
$\Aut(\mathcal{C}(1,1,2))\cong PGL(4)$, and $\Aut(\mathcal{Q}(1,2))\cong PGL(3)$.
\end{thm}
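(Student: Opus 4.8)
The plan is to split the statement into three regimes: the degenerate low–dimensional cases, the non–maximal rank blow-ups ($h\le n$), and the full wonderful compactifications ($h=n+1$). The degenerate cases are immediate: for $\mathcal{C}(1,1,2)$ the only blow-up center $\mathcal{S}^{1,1}$ is a quadric surface, hence a divisor, in $\mathbb{P}^3$, and for $\mathcal{Q}(1,2)$ the only center $\mathcal{V}^1$ is a conic, hence a divisor, in $\mathbb{P}^2$; blowing up a Cartier divisor is an isomorphism, so $\mathcal{C}(1,1,2)\cong\mathbb{P}^3$ and $\mathcal{Q}(1,2)\cong\mathbb{P}^2$, giving $PGL(4)$ and $PGL(3)$. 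For the remaining cases the engine is a lifting/descent argument reducing the automorphism group of a blow-up to that of a linearly normal base, together with an identification of the involutions that become regular only after all secants are blown up.

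For $h\le n$ I would argue by induction that the blow-ups do not change the automorphism group, i.e. $\Aut(\sec_h^{(k)}(\mathcal{S}^{n,m}))\cong\Aut(\sec_h(\mathcal{S}^{n,m}))$, and likewise for the Veronese. Given $\phi\in\Aut(\sec_h^{(k)}(\mathcal{S}^{n,m}))$, the first step is to show that $\phi$ preserves the exceptional locus $E_1\cup\dots\cup E_k$ of the blow-down $\pi\colon \sec_h^{(k)}(\mathcal{S}^{n,m})\to\sec_h(\mathcal{S}^{n,m})$; this is read off from the effective cone and the colors (Propositions \ref{effnef}, \ref{col_bou}), the $E_i$ being the contractible boundary rays. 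Since $\pi(E_i)=\sec_i(\mathcal{S}^{n,m})$ and these images have pairwise distinct dimensions, while $\Aut(\sec_h(\mathcal{S}^{n,m}))$ preserves the secant stratification by Propositions \ref{aut_sec} and \ref{tcones}, no $E_i$ can be exchanged with another; hence $\phi$ fixes each $E_i$ and descends to $\bar\phi\in\Aut(\sec_h(\mathcal{S}^{n,m}))$. Conversely every automorphism of the base preserves each $\sec_i(\mathcal{S}^{n,m})$, so it lifts to the blow-up, and the two assignments are mutually inverse because an automorphism inducing the identity on $\sec_h(\mathcal{S}^{n,m})$ is the identity on the dense locus where $\pi$ is an isomorphism. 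Proposition \ref{aut_sec} then yields the stated groups, and crucially no extra involution appears here because the adjugate map is not yet regular at this stage.

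For $h=n+1$ the base $\sec_{n+1}(\mathcal{S}^{n,m})$ is all of $\mathbb{P}^N$ and $\mathcal{C}(n,m,n+1)$ is the classical space of complete collineations. For the lower bound I would exhibit the automorphisms: the action (\ref{actcol}) descends to a faithful $PGL(n+1)\times PGL(m+1)$; when $n=m$ the transpose $Z\mapsto Z^t$ is a linear involution of $\mathbb{P}^N$ preserving the Segre and its secants, giving the factor-swap $S_2\ltimes(-)$; and, again for $n=m$, the adjugate $Z\mapsto\operatorname{adj}(Z)$, the projective inverse on the open orbit, is a birational involution of $\mathbb{P}^N$ with indeterminacy along $\sec_{n-1}(\mathcal{S}^{n,n})$ that becomes biregular on the full compactification and reverses the boundary divisors $E_i\leftrightarrow E_{n+1-i}$. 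For the upper bound, the subgroup $K\le\Aut(\mathcal{C}(n,m,n+1))$ fixing every $E_i$ descends, exactly as in the previous paragraph, to the linear automorphisms of $\mathbb{P}^N$ preserving the secant stratification; since the Segre is linearly normal this gives $K\cong\Aut(\mathcal{S}^{n,m})$, and the quotient $\Aut/K$ embeds into the permutation group of $\{E_1,\dots,E_n\}$.

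The main obstacle is to pin down this quotient exactly, namely to prove that the only non-trivial realizable permutation is the reversal $i\mapsto n+1-i$, so that $\Aut/K\cong S_2$ when $n=m$ and is trivial when $n<m$. Realizability of the reversal is supplied by the adjugate involution above; for the converse bound I would use that, by Theorem \ref{comp}, the orbit closures are exactly the intersections $\bigcap_{i\in I}E_i$, so $\phi$ must preserve the resulting incidence structure on $\{E_1,\dots,E_n\}$, which is that of the $A_n$ spherical system and has automorphism group $S_2$, while the distinct fibration types of the $E_i$ from Theorem \ref{comp} give the additional rigidity needed to exclude spurious permutations. For $n<m$ the reversal cannot occur, the matrices not being square, leaving $\Aut=PGL(n+1)\times PGL(m+1)$; for $n=m\ge 2$ we obtain $(S_2\ltimes(PGL(n+1)\times PGL(n+1)))\rtimes S_2$. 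The quadric case runs identically with $K\cong PGL(n+1)$, the linear automorphisms preserving the linearly normal $\mathcal{V}^n$, and the dual–quadric involution $Z\mapsto\operatorname{adj}(Z)$ on symmetric matrices supplying the outer $\rtimes S_2$, giving $PGL(n+1)\rtimes S_2$ for $n\ge 2$.
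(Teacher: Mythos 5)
Your argument is correct in outline, and for $h\le n$ --- the genuinely new content of the theorem --- it follows essentially the same route as the paper: show that every automorphism of the blow-up fixes each exceptional divisor individually, descend to $\Aut(\sec_h(\mathcal{S}^{n,m}))$, and conclude by Proposition \ref{aut_sec}. Two differences are worth recording. First, for the intermediate models $\sec_h^{(k)}$ with $k<h-1$ you descend directly to the base by reading off the exceptional rays from the effective cone and the colors; but Propositions \ref{effnef} and \ref{col_bou} are stated only for the smooth wonderful model $\mathcal{C}(n,m,h)$, and the intermediate $\sec_h^{(k)}$ are singular with cones not computed in the paper. The paper instead first treats $k=h-1$, ruling out permutations of the extremal rays by comparing the dimensions of the spaces of global sections of the nef generators, and then handles $k<h-1$ by \emph{lifting} automorphisms up to $\mathcal{C}(n,m,h)$ (the remaining blow-up centers being singular strata, hence invariant); this avoids any cone computation on a singular model, and you should reorganize your induction accordingly. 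Second, for $h=n+1$ the paper proves nothing and simply cites \cite[Theorem 7.5]{Ma18a}, whereas you sketch a full proof. Your sketch is the standard one (group action, transpose, and the adjugate involution for the lower bound; descent of the boundary-preserving subgroup for the upper bound), but the step asserting that the incidence structure of the orbit closures has automorphism group $S_2$ is not correct as stated: all intersections $\bigcap_{i\in I}E_i$ are nonempty, so the nerve is a full simplex with symmetry group $S_n$ on the vertices. The rigidity must come, as you then hint, from distinguishing the $E_i$ (equivalently the dual nef divisors $D_i$) by finer invariants --- their fibration types from Theorem \ref{comp}, or the dimensions of $H^0$ of the $D_i$, which are symmetric only under $i\mapsto n+1-i$ --- and this step needs to be made explicit for the argument to close.
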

\begin{proof}
When $h = n+1$ the statement follows from \cite[Theorem 7.5]{Ma18a}. Hence we consider the case $h\leq n$. We will prove the claim for $\sec_h^{(k)}(\mathcal{S}^{n,m})$. The argument in the symmetric case is completely analogous. 

First, take $k = h-1$. Hence $\sec_h^{(h-1)}(\mathcal{S}^{n,m})\cong\mathcal{C}(n,m,h)$. An automorphism $\phi\in \Aut(\mathcal{C}(n,m,h))$ acts on the extremal rays of $\Eff(\mathcal{C}(n,m,h))$ as a permutation. If it acts non trivially then it must act non trivially also on the generators of $\Nef(\mathcal{C}(n,m,h))$ in Proposition \ref{effnef}. However, this is not possible since for instance these nef divisors have spaces of global sections of different dimensions. Hence, $\phi$ stabilizes all the exceptional divisors in Definition \ref{def1}, and therefore it induces an automorphism $\widetilde{\phi}\in \Aut(\sec_h(\mathcal{S}^{n,m}))$. The morphism of groups 
$$
\begin{array}{cccc}
\widetilde{\chi}: &\Aut(\mathcal{C}(n,m,h))& \longrightarrow & \Aut(\sec_{h}(\mathcal{S}^{n,m}))\\
      & \phi & \longmapsto & \widetilde{\phi}
\end{array}
$$
is clearly an isomorphism, and we conclude by Proposition \ref{aut_sec}. 

Now, consider the case $k < h-1$. Recall that $\mathcal{C}(n,m,h)$ is obtained from $\sec_h^{(k)}(\mathcal{S}^{n,m})$ by blow-ups centered at subvarieties of $\sec_h^{(k)}(\mathcal{S}^{n,m})$ that are stabilized by all $\phi\in\Aut(\sec_h^{(k)}(\mathcal{S}^{n,m}))$. Hence, $\phi\in\Aut(\sec_h^{(k)}(\mathcal{S}^{n,m}))$ lifts two an automorphism $\overline{\phi}$ of $\mathcal{C}(n,m,h)$, and we get a morphism of groups 
$$
\begin{array}{cccc}
\overline{\chi}:  & \Aut(\sec_{h}(\mathcal{S}^{n,m})) 
 & \longrightarrow & \Aut(\mathcal{C}(n,m,h))\\
      & \phi & \longmapsto & \overline{\phi}
\end{array}
$$
which again is an isomorphism. Finally, we conclude by the computation of $\Aut(\mathcal{C}(n,m,h))$ in the first part of the proof. 
\end{proof}

\section{Kontsevich spaces of conics and complete singular forms}\label{sec4}
An $n$-pointed rational pre-stable curve $(C,(x_{1},...,x_{n}))$ is a projective, connected, reduced curve with at most nodal singularities of arithmetic genus zero, with $n$ distinct and smooth marked points $x_1,...,x_n\in C$. We will refer to the marked and the singular points of $C$ as special points.

Let $X$ be a homogeneous variety. A map $(C,(x_{1},...,x_{n}),\alpha)$, where $\alpha:C\rightarrow X$ is a morphism from an $n$-pointed rational pre-stable curve to $X$ is stable if any component $E\cong\mathbb{P}^{1}$ of $C$ contracted by $\alpha$ contains at least three special points.

Now, let us fix a class $\beta\in H_2(X,\mathbb{Z})$. By \cite[Theorem 2]{FP} there exists a smooth, proper, and separated Deligne-Mumford stack $\overline{\mathcal{M}}_{0,n}(X,\beta)$ parametrizing isomorphism classes of stable maps $[C,(x_{1},...,x_{n}),\alpha]$ such that $\alpha_{*}[C] = \beta$.

Furthermore, by \cite[Corollary 1]{KP} the coarse moduli space $\overline{M}_{0,n}(X,\beta)$ associated to the stack $\overline{\mathcal{M}}_{0,n}(X,\beta)$ is a normal, irreducible, projective variety with at most finite quotient singularities of dimension
$$
\dim(\overline{M}_{0,n}(X,\beta)) = \dim(X)+\int_{\beta}c_1(T_X)+n-3.
$$
The variety $\overline{M}_{0,n}(X,\beta)$ is called the \textit{moduli space of stable maps}, or the \textit{Kontsevich moduli space} of stable maps of class $\beta$ from a rational pre-stable $n$-pointed curve to $X$. The boundary $\partial\overline{M}_{0,n}(X,\beta) = \overline{M}_{0,n}(X,\beta)\setminus M_{0,n}(X,\beta)$ is a simple normal crossing divisor in $\overline{M}_{0,n}(X,\beta)$ whose points parametrize isomorphism classes of stable maps $[C,(x_{1},...,x_{n}),\alpha]$ where $C$ is a reducible curve. When $X = \mathbb{P}^N$, we will write $\overline{M}_{0,n}(\mathbb{P}^N,d)$ for $\overline{M}_{0,n}(\mathbb{P}^N,d[L])$, where $L\subseteq\mathbb{P}^N$ is a line.

For details on moduli spaces parametrizing curves in projective spaces, and in particular conics, we refer to \cite[Section 8.4]{EH16}.

\subsection{Conics in $\mathbb{P}^n$}
Let $\overline{M}_{0,0}(\mathbb{P}^n,2)$ be the Kontsevich space of conics in $\P^n$. We will denote by $\Delta \subset \overline{M}_{0,0}(\mathbb{P}^n,2)$ the boundary divisor parametrizing maps with reducible domain, and by $\Gamma \subset \overline{M}_{0,0}(\mathbb{P}^n,2)$ the locus of maps of degree two onto a line. Note that $\Gamma$ is a $\mathbb{P}^2$-bundle over the Grassmannian $\mathbb{G}(1,n)$ parametrizing lines in $\mathbb{P}^n$. In $\overline{M}_{0,0}(\mathbb{P}^n,2)$ we consider the following divisor classes:
\begin{itemize}
\item[-] $\mathcal{H}$ of conics intersecting a fixed codimension two linear subspace of $\mathbb{P}^n$;
\item[-] $\mathcal{T}$ of conics which are tangent to a fixed hyperplane in $\mathbb{P}^n$;
\item[-] $D_{deg}$ of conics spanning a plane that intersects a fixed linear subspace of dimension $n-3$ in $\mathbb{P}^n$.
\end{itemize}

It is well-known that $\overline{M}_{0,0}(\mathbb{P}^2,2)$ is isomorphic to the space of complete conics $\mathcal{Q}(2,3)$ \cite[Section 0.4]{FP}. The following result generalizes this fact.

\begin{Proposition}\label{isoQ}
The Kontsevich space $\overline{M}_{0,0}(\mathbb{P}^n,2)$ is isomorphic to the blow-up $\sec_3^{(1)}(\mathcal{V}^n)$ of $\sec_3(\mathcal{V}^n)$ along $\mathcal{V}^n$.
\end{Proposition}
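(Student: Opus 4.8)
The plan is to construct an explicit morphism $\overline{M}_{0,0}(\mathbb{P}^n,2)\to \sec_3^{(1)}(\mathcal{V}^n)$ and show it is an isomorphism by identifying it with a blow-up on both sides. A point of $\overline{M}_{0,0}(\mathbb{P}^n,2)$ is a stable map $\alpha:C\to\mathbb{P}^n$ of degree two; its image spans at most a $\mathbb{P}^2$, and a nondegenerate conic in a plane is cut out by a rank $3$ quadratic form on that plane. More intrinsically, a conic in $\mathbb{P}^n$ determines a pencil of quadrics, or better a single symmetric bilinear form up to scalar whose rank is at most $3$: namely the restriction data packaging the plane together with the conic therein. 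I would thus send a stable map to the corresponding symmetric $(n+1)\times(n+1)$ matrix of rank $\leq 3$, i.e.\ a point of $\sec_3(\mathcal{V}^n)$, and then check that the degenerations lift to the blow-up $\sec_3^{(1)}(\mathcal{V}^n)$ of $\sec_3(\mathcal{V}^n)$ along $\mathcal{V}^n$.

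First I would set up the classical correspondence between smooth conics and rank $3$ symmetric forms: a smooth conic $C\subset\mathbb{P}^2\subset\mathbb{P}^n$ is the zero locus of a nondegenerate quadratic form $q$ on the spanning plane, and extending $q$ by zero on a complement gives a rank $3$ symmetric $(n+1)\times(n+1)$ matrix, well-defined up to scalar, hence a point of $\sec_3(\mathcal{V}^n)\setminus\sec_2(\mathcal{V}^n)$. This identifies the open locus $M_{0,0}(\mathbb{P}^n,2)$ of maps with smooth irreducible image with the smooth open stratum $\sec_3(\mathcal{V}^n)\setminus\sec_2(\mathcal{V}^n)$. Next I would analyze the boundary. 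The divisor $\Delta$ of maps with reducible domain (two lines meeting at a point) corresponds to rank $2$ forms, i.e.\ to $\sec_2(\mathcal{V}^n)\setminus\mathcal{V}^n$; and the locus $\Gamma$ of degree two maps onto a line, which is a $\mathbb{P}^2$-bundle over $\mathbb{G}(1,n)$, is exactly what must be compared with the exceptional divisor $E_1^{\mathcal{Q}}$ of the blow-up along $\mathcal{V}^n$. The key point is that a double line in $\mathbb{P}^n$ together with the degree two map data (a pencil of branch points, recorded by $\Gamma$ being a $\mathbb{P}^2$-bundle) carries more information than the bare rank $1$ form $\mathcal{V}^n$ remembers, and this extra $\mathbb{P}^2$ of data is precisely the fibre of the projectivized normal cone blown up in passing to $\sec_3^{(1)}(\mathcal{V}^n)$.

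I would then show the set-theoretic bijection upgrades to an isomorphism of schemes. The cleanest route is to produce a morphism in \emph{both} directions, or to invoke a universal property: the $SL(n+1)$-equivariant map $\overline{M}_{0,0}(\mathbb{P}^n,2)\to\sec_3(\mathcal{V}^n)$ extends the classical correspondence, and since $\overline{M}_{0,0}(\mathbb{P}^n,2)$ is smooth (indeed it has at worst quotient singularities, but here smoothness of the conic space is known) while it dominates $\sec_3(\mathcal{V}^n)$ contracting nothing off $\Gamma$, the universal property of blowing up along $\mathcal{V}^n$ gives a factorization through $\sec_3^{(1)}(\mathcal{V}^n)$. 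To apply it I would verify that the ideal sheaf of $\mathcal{V}^n$ pulls back to an invertible sheaf on $\overline{M}_{0,0}(\mathbb{P}^n,2)$, equivalently that $\Gamma$ is a Cartier divisor mapping onto $\mathcal{V}^n$. One then checks the induced morphism $\overline{M}_{0,0}(\mathbb{P}^n,2)\to\sec_3^{(1)}(\mathcal{V}^n)$ is bijective and a morphism between normal (here smooth) varieties that is an isomorphism over the dense open stratum, and is finite birational, hence by Zariski's main theorem an isomorphism.

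The main obstacle I anticipate is the careful analysis of the $\Gamma$ locus: one must show that the moduli-theoretic data of a degree two map onto a line — which globally forms a $\mathbb{P}^2$-bundle over $\mathbb{G}(1,n)$ — matches exactly, fibre by fibre and with the correct scheme structure, the exceptional divisor $E_1^{\mathcal{Q}}$ of $\sec_3^{(1)}(\mathcal{V}^n)\to\sec_3(\mathcal{V}^n)$, whose fibre over a point of $\mathcal{V}^n$ is the projectivized tangent cone computed in Proposition \ref{tcones}, namely $\sec_2(\mathcal{V}^{n-1})$ or the appropriate secant of a lower Veronese. Reconciling the $\mathbb{P}^2$-bundle description of $\Gamma$ with the tangent-cone description of the exceptional fibre, and confirming the multiplicities agree so that the pullback of the ideal of $\mathcal{V}^n$ is genuinely locally principal rather than merely radically so, is where the real content lies; the rest is formal application of the universal property of blow-ups together with Zariski's main theorem.
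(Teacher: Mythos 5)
There is a genuine gap, and it sits exactly where you predicted ``the real content lies'': the correspondence you set up between stable maps and symmetric forms is not the right one, and your identification of the boundary strata is backwards. The assignment ``conic $\mapsto$ the quadratic form cutting it out in its span, extended by zero on a complement'' is not well defined: that form lives in $\Sym^2 W^*$ for the $3$-dimensional subspace $W$ spanned by the conic, and lifting it to $\Sym^2 V^*$ requires a choice of complement. The canonical object attached to a smooth conic is the \emph{dual} quadric, an element of $\Sym^2 W\subset \Sym^2 V$, i.e.\ a rank-three quadric hypersurface in the dual projective space; this is the morphism the paper uses (it is induced by the tangency divisor $\mathcal{T}$ and extended over the boundary via \cite[Theorem 1.2]{CHS09}). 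Under this correct correspondence the strata match up in the opposite way from what you claim: a reducible conic (a point of $\Delta$) goes to the \emph{double hyperplane} dual to its node, hence to a point of $\mathcal{V}^n$, while a double cover of a line ramified at $p,q$ (a point of $\Gamma$) goes to the rank-two quadric $H_p\cup H_q$, hence to a point of $\sec_2(\mathcal{V}^n)$.

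A dimension count shows your version cannot be repaired: $\Gamma$ is a $\mathbb{P}^2$-bundle over $\mathbb{G}(1,n)$, so $\dim\Gamma = 2n$, while $\overline{M}_{0,0}(\mathbb{P}^n,2)$ has dimension $3n-1$ and $E_1^{\mathcal{Q}}$ has dimension $3n-2$; for $n\geq 3$ the locus $\Gamma$ is not a divisor at all, so it cannot be the ``Cartier divisor mapping onto $\mathcal{V}^n$'' you propose to feed into the universal property of the blow-up, and it cannot dominate $E_1^{\mathcal{Q}}$. It is $\Delta$ that is the divisor contracted onto $\mathcal{V}^n$, with fibres $\Sym^2\mathbb{P}^{n-1}\cong\sec_2(\mathcal{V}^{n-1})$ matching the projectivized tangent cone of Proposition \ref{tcones}, and it is $\Delta$ that supplies the divisor needed to lift the map to $\sec_3^{(1)}(\mathcal{V}^n)$; the locus $\Gamma$ instead maps finitely and birationally onto the strict transform $\sec_2^{(1)}(\mathcal{V}^n)$, which is the separate verification the paper carries out before concluding by Zariski's main theorem. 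Your ``direct'' assignment (the image conic with its defining equation) really belongs to the Hilbert--Chow side of the picture, which by Remark \ref{bir_n2} is a flip of $\overline{M}_{0,0}(\mathbb{P}^n,2)$ along $\Gamma$, not an isomorphism onto $\sec_3^{(1)}(\mathcal{V}^n)$. The outer architecture of your argument (morphism on the open locus, universal property of the blow-up, finite birational between normal varieties, Zariski's main theorem) does coincide with the paper's, but it cannot be executed with the correspondence and stratification you chose.
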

\begin{proof}
We may associate to a rank three quadric $Q\subset\mathbb{P}^n$ its dual conic $C_Q\subset \mathbb{P}^{n*}$. Conversely, given a smooth conic $C_Q\in \overline{M}_{0,0}(\mathbb{P}^n,2)$ we can consider the cone swept by the duals of the tangent lines of $C_Q$ and whose vertex is the dual of the plane spanned by $C_Q$. This yields a morphism
$$
\begin{array}{cccc}
\phi^{o}: & M_{0,0}(\mathbb{P}^n,2)& \longrightarrow & \sec_3(\mathcal{V}^n)\subset\mathbb{P}^{N_{+}}\\
      & C_Q & \longmapsto & Q.
\end{array}
$$ 
Consider the hyperplane $H = \{z_{0,0}=0\}\subset\mathbb{P}^{N_{+}}$. The points of $H\cap \sec_3(\mathcal{V}^n)$ correspond to the rank three quadrics $Q\subset\mathbb{P}^n$ passing through $p = [1:0:\dots :0]$. These quadric in turn correspond via the morphism $\phi^{o}$ to the conics $C_Q\subset \mathbb{P}^{n*}$ that are tangent the the hyperplane $H_p\subset \mathbb{P}^{n*}$ which is dual to $p\in\mathbb{P}^n$. Hence, $\phi^{o}$ is induced by the divisor class $\mathcal{T}$. Now, \cite[Theorem 1.2]{CHS09} yields that $\phi^{o}$ extends to a morphism 
$$
\phi:\overline{M}_{0,0}(\mathbb{P}^n,2)\rightarrow \sec_3(\mathcal{V}^n)
$$     
restricting to an isomorphism on $M_{0,0}(\mathbb{P}^n,2)$ and contracting the boundary divisor $\Delta$. 

Fix a rank two conic in $C_Q\subset \mathbb{P}^{n*}$. Up to an automorphism of $\mathbb{P}^{n*}$ we may assume that $C_Q = \{x_0 = \dots = x_{n-3} = x_{x-2}x_{n-1}=0\}$. Consider the family of smooth conics $C_{Q,t} = \{x_0 = \dots = x_{n-3} = x_{x-2}x_{n-1}-tx_n^2=0\}$, with $t\neq 0$, degenerating to $C_Q$. Then 
$$
\phi(C_{Q,t}) = \{x_n^2-4tx_{n-2}x_{n-1}=0\}
$$
where we keep denoting by $[x_0:\dots:x_n]$ the homogeneous coordinates on the dual projective space. Hence
$$
\phi(C_Q) = \lim_{t\mapsto 0}\phi(C_{Q,t}) = \{x_n^2 = 0\}
$$ 
and so $\Delta$ gets contracted onto the Veronese variety $\mathcal{V}^n\subset \sec_3(\mathcal{V}^n)$. Now, by \cite[Proposition 7.14]{Har77} $\phi$ yields a morphism 
$$
\psi:\overline{M}_{0,0}(\mathbb{P}^n,2)\rightarrow \sec_3^{(1)}(\mathcal{V}^n)
$$
mapping $\Delta$ onto $E_1^{\mathcal{Q}}$. Hence, $\psi$ restricts to a morphism $\psi_{|\Gamma}:\Gamma\rightarrow\sec_2^{(1)}(\mathcal{V}^n)$ associating to a double cover $\mathbb{P}^1\rightarrow L$ ramified at $p,q\in L$ the rank two quadric $H_p\cup H_q$, where $H_p,H_q$ are the hyperplanes dual to $p$ and $q$. Moreover, associating to a rank two quadric $H_1\cup H_2$ the $2$-to-$1$ cover $\mathbb{P}^1\rightarrow (H_1\cap H_2)^{*}$ ramified at $H_1^{*},H_2^{*}$ we get a birational inverse of $\psi_{|\Gamma}$. Note that $\psi_{|\Gamma}$ can not contract any divisor in $\Gamma$ since both $\Gamma$ and $\sec_2^{(1)}(\mathcal{V}^n)$ have Picard rank two. Furthermore, $\psi_{|\Gamma}$ can not contract any locus of codimension greater than one in $\Gamma$ either since $\sec_2^{(1)}(\mathcal{V}^n)$ is smooth. 

Hence, $\psi:\overline{M}_{0,0}(\mathbb{P}^n,2)\rightarrow \sec_3^{(1)}(\mathcal{V}^n)$ is a finite and birational morphism. Finally, since $\overline{M}_{0,0}(\mathbb{P}^n,2)$ and $\sec_3^{(1)}(\mathcal{V}^n)$ are normal \cite[Chapter 3, Section 9]{Mum99} yields that $\psi$ is an isomorphism.       
\end{proof}

As an application of Proposition \ref{isoQ} we have the following result. 

\begin{Proposition}\label{mcd_pn2}
The Kontsevich space $\overline{M}_{0,0}(\mathbb{P}^n,2)$ is a spherical variety with respect to the following $SL(n+1)$-action:
\stepcounter{thm}
\begin{equation}\label{actm22}
\begin{array}{cccc}
SL(n+1)\times \overline{M}_{0,0}(\mathbb{P}^n,2) & \longrightarrow & \overline{M}_{0,0}(\mathbb{P}^n,2)\\
(A,[C,\alpha]) & \longmapsto & [C,A\circ\alpha].
\end{array}
\end{equation}
The effective cone of $\overline{M}_{0,0}(\mathbb{P}^n,2)$ is generated by $\Delta$ and $D_{deg}$, and the nef cone of $\overline{M}_{0,0}(\mathbb{P}^n,2)$ is generated by $\mathcal{T}$ and $\mathcal{H}$. Furthermore, the following 
$$
\begin{tikzpicture}[line cap=round,line join=round,>=triangle 45,x=1.0cm,y=1.0cm]
xmin=2.5,
xmax=7.5,
ymin=1.75,
ymax=5.301934941656083,
xtick={2.5,3.0,...,5.5},
ytick={2.0,2.5,...,5.0},]
\clip(2.1,1.65) rectangle (7.5,5.301934941656083);
\draw [->,line width=0.1pt] (3.,4.) -- (3.,5.);
\draw [->,line width=0.1pt] (3.,4.) -- (4.,4.);
\draw [->,line width=0.1pt] (3.,4.) -- (5.,3.);
\draw [->,line width=0.1pt] (3.,4.) -- (5.5,2.);
\draw [shift={(3.,4.)},line width=0.4pt,fill=black,fill opacity=0.15000000596046448]  (0,0) --  plot[domain=-0.46364760900080615:0.,variable=\t]({1.*0.6965067581669063*cos(\t r)+0.*0.6965067581669063*sin(\t r)},{0.*0.6965067581669063*cos(\t r)+1.*0.6965067581669063*sin(\t r)}) -- cycle ;
\begin{scriptsize}
\draw[color=black] (3.085808915158281,5.18) node {$\Delta$};
\draw[color=black] (4.2,4.1) node {$\mathcal{T}$};
\draw[color=black] (5.19,3.119262579937719) node {$\mathcal{H}$};
\draw[color=black] (6.0,2.118119471876817) node {$D_{deg}$};
\end{scriptsize}
\end{tikzpicture}
$$
is the Mori chamber decomposition of $\Eff(\overline{M}_{0,0}(\mathbb{P}^n,2))$, where $\mathcal{H}\sim 2\mathcal{T}-\Delta$ and $D_{deg}\sim 3\mathcal{T}-2\Delta$.
\end{Proposition}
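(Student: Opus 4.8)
The plan is to transport the whole statement to $\sec_3^{(1)}(\mathcal{V}^n)$ through the isomorphism $\psi$ of Proposition \ref{isoQ}, and then to mimic the argument of Proposition \ref{mcdq3}. First I would check that $\psi$ is $SL(n+1)$-equivariant: by construction $\psi$ sends a conic to the quadric dual to the cone over its tangent lines, and this assignment intertwines the action (\ref{actm22}) with the action (\ref{actquad}) up to the duality automorphism $A\mapsto (A^{-1})^{t}$ of $SL(n+1)$. Since twisting by a group automorphism only changes the choice of Borel subgroup, it does not affect the existence of a dense Borel orbit. Sphericity of $\sec_3^{(1)}(\mathcal{V}^n)$ itself then follows from Theorem \ref{comp}: the blow-down $\pi\colon \mathcal{Q}(n,3)\to\sec_3^{(1)}(\mathcal{V}^n)$ contracting $E_2^{\mathcal{Q}}$ is $SL(n+1)$-equivariant, and since $\mathcal{Q}(n,3)$ is wonderful its dense Borel orbit maps onto a dense Borel orbit of $\sec_3^{(1)}(\mathcal{V}^n)\cong \overline{M}_{0,0}(\mathbb{P}^n,2)$. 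This gives the spherical structure.

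Next I would identify the relevant divisor classes. From the proof of Proposition \ref{isoQ}, $\psi$ carries $\Delta$ to the boundary divisor $E_1^{\mathcal{Q}}$, while the morphism $\phi^{o}$ to $\sec_3(\mathcal{V}^n)\subset\mathbb{P}^{N_{+}}$ is induced by $\mathcal{T}$; since $\phi$ factors as $\psi$ followed by the blow-down to $\sec_3(\mathcal{V}^n)$, one gets $\mathcal{T}\sim D_1^{\mathcal{Q}}\sim H$, where $H$ is the pullback of the hyperplane class as in Lemma \ref{div_dec}. The remaining relations $\mathcal{H}\sim 2\mathcal{T}-\Delta$ and $D_{deg}\sim 3\mathcal{T}-2\Delta$ I would read off from Lemma \ref{div_dec} by pushing the color classes forward along $\pi$, namely $\mathcal{H}\sim \pi_{*}D_2^{\mathcal{Q}}\sim 2H-E_1^{\mathcal{Q}}$ and $D_{deg}\sim \pi_{*}D_3^{\mathcal{Q}}\sim 3H-2E_1^{\mathcal{Q}}$; alternatively they follow from intersecting $\mathcal{H}$ and $D_{deg}$ with the strict transform of a pencil of conics, exactly as in Lemma \ref{div_dec}. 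In particular $\mathcal{T},\mathcal{H},D_{deg}$ are the $\pi$-images of the three colors $D_1^{\mathcal{Q}},D_2^{\mathcal{Q}},D_3^{\mathcal{Q}}$ of $\mathcal{Q}(n,3)$, while $\Delta$ is the unique boundary divisor.

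For the cones I would use the equivariant blow-down $\pi$ together with Proposition \ref{effnef}. Working in the rank-two Picard group with basis $H, E_1^{\mathcal{Q}}$, the classes sit at $\Delta=(0,1)$, $\mathcal{T}=(1,0)$, $\mathcal{H}=(2,-1)$ and $D_{deg}=(3,-2)$. By \cite[Proposition 4.5.4.4]{ADHL15} the effective cone is generated by the boundary divisor and the colors, that is by these four classes, and reading off extremal rays gives $\Eff(\overline{M}_{0,0}(\mathbb{P}^n,2))=\langle \Delta,D_{deg}\rangle$, with $\mathcal{T},\mathcal{H}$ interior. For the nef cone I would note that $\mathcal{T}\sim H$ and $\mathcal{H}\sim 2H-E_1^{\mathcal{Q}}$ are precisely the generators of $\Nef(\mathcal{Q}(n,3))=\langle D_1^{\mathcal{Q}},D_2^{\mathcal{Q}},D_3^{\mathcal{Q}}\rangle$ not involving $E_2^{\mathcal{Q}}$, so they span the face $\Nef(\mathcal{Q}(n,3))\cap\{E_2^{\mathcal{Q}}=0\}=\pi^{*}\Nef(\sec_3^{(1)}(\mathcal{V}^n))$; hence $\Nef(\overline{M}_{0,0}(\mathbb{P}^n,2))=\langle \mathcal{T},\mathcal{H}\rangle$, in agreement with \cite[Section 2.6]{Br89}.

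Finally, for the Mori chamber decomposition I would argue exactly as in Proposition \ref{mcdq3}: Remarks \ref{toric} and \ref{gen_cox} show that the decomposition of $\Eff(\overline{M}_{0,0}(\mathbb{P}^n,2))$ is a coarsening of the one cut out by the Cox-ring generators, and since $\mathcal{T},\mathcal{H}$ generate the nef cone their walls cannot be removed. It then remains to verify that the two outer chambers $\langle\Delta,\mathcal{T}\rangle$ and $\langle \mathcal{H},D_{deg}\rangle$ are not subdivided further: on the first, $\mathcal{T}\sim H$ induces the divisorial contraction $\phi$ of $\Delta$ onto $\mathcal{V}^n$, so the stable base locus is constantly $\Delta$; on the second, $\mathcal{H}$ induces the small contraction of $\Gamma$ (the cycle map), and a test curve in a $\mathbb{P}^2$-fibre of $\Gamma$ meets $\mathcal{H}$ in degree zero and $D_{deg}$ negatively, so $\Gamma$ lies in the stable base locus throughout $\langle\mathcal{H},D_{deg}\rangle$. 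I expect this last point, namely pinning down the stable base locus and the contraction on each outer chamber so as to exclude any additional wall, to be the main obstacle, just as the wall-removal analysis was the crux of Proposition \ref{mcdq3}.
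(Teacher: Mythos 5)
Your proposal is correct and follows essentially the same route as the paper: transport everything to $\sec_3^{(1)}(\mathcal{V}^n)$ via Proposition \ref{isoQ}, read off the boundary divisor and colors from $\mathcal{Q}(n,3)$ through the equivariant blow-down of $E_2^{\mathcal{Q}}$, identify the classes $\Delta,\mathcal{T},\mathcal{H},D_{deg}$ with $E_1^{\mathcal{Q}},D_1^{\mathcal{Q}},D_2^{\mathcal{Q}},D_3^{\mathcal{Q}}$ using Lemma \ref{div_dec}, and run the wall analysis of Proposition \ref{mcdq3}. The only (harmless) divergence is that you re-derive the effective and nef cones from the spherical-variety machinery, whereas the paper simply quotes the results of \cite{CHS08} and \cite{CHS09} for those two cones.
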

\begin{proof}
The effective and the nef cone of $\overline{M}_{0,0}(\mathbb{P}^n,2)$ had already been computed in \cite[Theorem 1.5, Corollary 1.6]{CHS08} and \cite[Theorem 1.1]{CHS09} respectively.

The $SL(n+1)$-action on $\overline{M}_{0,0}(\mathbb{P}^n,2)$ in (\ref{actm22}) corresponds to the $SL(n+1)$-action on $\sec_3^{(1)}(\mathcal{V}^n)$ induced by (\ref{actquad}) via the isomorphism in Proposition \ref{isoQ}. Note that with respect to this action $\sec_3^{(1)}(\mathcal{V}^n)$ is spherical but not wonderful. However, we can deduce its boundary divisors and colors from those of $\mathcal{Q}(n,3)$ via the blow-down $\mathcal{Q}(n,3)\rightarrow\sec_3^{(1)}(\mathcal{V}^n)$ of $E_2^{\mathcal{Q}}$. Since boundary divisors and colors of $\sec_3^{(1)}(\mathcal{V}^n)$ lift to boundary divisors and colors of $\mathcal{Q}(n,3)$ by Proposition \ref{col_bou} we get that $E_1^{\mathcal{Q}}$ is the only boundary divisor of $\sec_3^{(1)}(\mathcal{V}^n)$, and that its colors are $D_1^{\mathcal{Q}},D_2^{\mathcal{Q}},D_3^{\mathcal{Q}}$, where we kept the same notation for divisors on $\mathcal{Q}(n,3)$ and $\sec_3^{(1)}(\mathcal{V}^n)$. Hence, arguing as in the proof of Proposition \ref{mcdq3} we get that $D_1^{\mathcal{Q}},D_2^{\mathcal{Q}}$ generate the nef cone of $\sec_3^{(1)}(\mathcal{V}^n)$, $D_3^{\mathcal{Q}},E_1^{\mathcal{Q}}$ generate it effective cone, and the Mori chamber decomposition of $\Eff(\sec_3^{(1)}(\mathcal{V}^n))$ has three chambers delimited respectively by the divisors $D_3^{\mathcal{Q}},D_2^{\mathcal{Q}}$, the divisors $D_2^{\mathcal{Q}},D_1^{\mathcal{Q}}$ and the divisors $D_1^{\mathcal{Q}},E_1^{\mathcal{Q}}$.

Now, by the proof of Proposition \ref{isoQ} we have that $E_1^{\mathcal{Q}}$ gets mapped to $\Delta$ by the isomorphism $\psi^{-1}: \sec_3^{(1)}(\mathcal{V}^n)\rightarrow\overline{M}_{0,0}(\mathbb{P}^n,2)$. Moreover, a straightforward computation shows that $\psi^{-1*}\mathcal{T} = D_1^{\mathcal{Q}}$, $\psi^{-1*}\mathcal{H} = D_2^{\mathcal{Q}}$ and $\psi^{-1*}D_{deg} = \frac{1}{2}D_3^{\mathcal{Q}}$. Finally, the statement follows from Lemma \ref{div_dec}, Proposition \ref{isoQ} and the description of the Mori chamber decomposition of $\Eff(\sec_3^{(1)}(\mathcal{V}^n))$ in the first part of the proof.
\end{proof}

\begin{Remark}\label{bir_n2}
We sum up the birational models of $\overline{M}_{0,0}(\mathbb{P}^n,2)$ in the following diagram: 
\[\begin{tikzcd}
	&& {\mathcal{Q}(n,3)} \\
	{\sec_3^{(1)}(\mathcal{V}^n)\cong \overline{M}_{0,0}(\mathbb{P}^n,2)} &&&& {\Hilb_{2}(\mathbb{P}^n)} \\
	&& {\Chow_2(\mathbb{P}^n)} \\
	{\sec_3(\mathcal{V}^n)} &&&& {\mathbb{G}(2,n)}
	\arrow[from=1-3, to=2-1]
	\arrow["{\mathcal{T}}"', from=2-1, to=4-1]
	\arrow["{\mathcal{H}}", from=2-1, to=3-3]
	\arrow[from=1-3, to=2-5]
	\arrow["\chi", dashed, from=2-1, to=2-5]
	\arrow[from=2-5, to=3-3]
	\arrow["{\widetilde{D}_{deg}}", from=2-5, to=4-5]
\end{tikzcd}\]
where $\Hilb_2(\mathbb{P}^n)$ and $\Chow_2(\mathbb{P}^n)$ are respectively the Hilbert scheme and the Chow variety of conics in $\mathbb{P}^n$, $\chi$ is the flip of $\Gamma\subset\overline{M}_{0,0}(\mathbb{P}^n,2)$, $\mathbb{G}(2,n)$ is the Grassmannians of planes in $\mathbb{P}^n$, and $\widetilde{D}_{deg}$ is the strict transform of $D_{deg}$ via $\chi$. The morphism induced by $\widetilde{D}_{deg}$ associates to a conic in $\Hilb_2(\mathbb{P})^n$ the unique plane of $\mathbb{P}^n$ in which it is contained. We would like to stress that the modular interpretation of the flip of $\overline{M}_{0,0}(\mathbb{P}^n,2)$ as a Hilbert scheme was well-know, see for instance \cite[Section 3]{Ki11}.
\end{Remark}

\subsection{Conics in $\mathbb{P}^n\times\mathbb{P}^m$}
Let $\mmnm$ be the Kontsevich space parametrizing conics in $\mathbb{P}^n \times \mathbb{P}^m$. Denote by $\pi: \overline{M}_{0,1}(\mathbb{P}^n \times \mathbb{P}^m,(1,1)) \rightarrow \mmnm$ the forgetful morphism, and by $ev: \overline{M}_{0,1}(\mathbb{P}^n \times \mathbb{P}^m,(1,1)) \rightarrow \mathbb{P}^n \times \mathbb{P}^m$ the evaluation morphism.

Let $H_n$ and $H_m$ be the hyperplane sections of $\mathbb{P}^n$ and $\mathbb{P}^m$ respectively, and $H_{n,m} \cong \mathbb{P}^{n-1} \times \mathbb{P}^{m-1} \subset \mathbb{P}^{n} \times \mathbb{P}^{m}$. Consider the divisors
$$\mathcal{K}^n:= \pi_*ev^*H^2_n,\: \mathcal{K}^m:= \pi_*ev^*H^2_m,\: \mathcal{K}^{n,m}:= \pi_*ev^*\mathcal{H}_{n,m}$$
and let $\Delta$ be the boundary divisor of maps with reducible domain.

By the proof of \cite[Lemma 1, Section 2.1]{Op05}, the Picard group of $\mmnm$ is generated by $\Delta, \mathcal{K}^n, \mathcal{K}^m$. In particular, since $H_1^2=0$, the Picard rank of $\mmnm$ is:
\stepcounter{thm}
\begin{equation}\label{picmmnm}
\rho(\mmnm) = \begin{cases} 
1 & \text{ if } n= m = 1;\\
2 & \text{ if } n=1, m\geq 2; \\
3 & \text{ if } n,m\geq 2.
\end{cases} 
\end{equation}

\begin{Proposition}\label{isoC}
The Kontsevich space $\mmnm$ is isomorphic to the space $\mathcal{C}(n,m,2)$ of rank two complete collineations on $\mathbb{P}^n\times\mathbb{P}^m$.
\end{Proposition}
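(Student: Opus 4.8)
The plan is to follow the strategy of Proposition \ref{isoQ}, replacing the Veronese variety by the Segre variety and quadrics by collineations. First I would set up the dictionary on the open part. A smooth conic of class $(1,1)$ in $\mathbb{P}^n\times\mathbb{P}^m$ projects isomorphically onto a line $\ell_1=\mathbb{P}(V_2)\subset\mathbb{P}(V)$ and a line $\ell_2=\mathbb{P}(W_2)\subset\mathbb{P}(W)$, with $V_2\subseteq V$ and $W_2\subseteq W$ two-dimensional, and it is the graph of an isomorphism $\ell_1\xrightarrow{\sim}\ell_2$. Such an isomorphism is precisely a nondegenerate element of $V_2\otimes W_2$ modulo scalars, that is a rank two tensor in $V\otimes W$. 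This assignment yields a morphism $\phi^{o}\colon M_{0,0}(\mathbb{P}^n\times\mathbb{P}^m,(1,1))\to\sec_2(\mathcal{S}^{n,m})$ which I expect to be an isomorphism onto the rank exactly two locus $\sec_2(\mathcal{S}^{n,m})\setminus\mathcal{S}^{n,m}$, the inverse reading off from a rank two matrix the two spaces $V_2,W_2$ and the induced isomorphism.

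Next I would extend $\phi^{o}$ to the whole space and identify the image of the boundary. A point of $\Delta$ corresponds to a reducible map whose components have classes $(1,0)$ and $(0,1)$; concretely it is a datum $(\ell_1,p,\ell_2,q)$ with $p\in\ell_1\subset\mathbb{P}^n$ and $q\in\ell_2\subset\mathbb{P}^m$, the two components meeting over the node $(p,q)$. Degenerating a one-parameter family of smooth conics to such a nodal curve, the corresponding $2\times 2$ block becomes singular and the rank two tensor limits to the rank one tensor supported at $(p,q)$, exactly as in the computation of $\lim_{t\to 0}\phi(C_{Q,t})$ in Proposition \ref{isoQ}. Thus $\phi^{o}$ extends to a morphism $\phi$ contracting $\Delta$ onto $\mathcal{S}^{n,m}$, the fibre over $(p,q)$ being the product of the lines through $p$ in $\mathbb{P}^n$ and the lines through $q$ in $\mathbb{P}^m$, that is a copy of $\mathbb{P}^{n-1}\times\mathbb{P}^{m-1}$; a dimension count against Remark \ref{sec_ver} confirms this. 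Since $\mathcal{C}(n,m,2)=\sec_2^{(1)}(\mathcal{S}^{n,m})$ is the blow-up of $\sec_2(\mathcal{S}^{n,m})$ along $\mathcal{S}^{n,m}$ and $\phi^{-1}(\mathcal{S}^{n,m})=\Delta$ is a Cartier divisor, the universal property of the blow-up (\cite[Proposition 7.14]{Har77}) lifts $\phi$ to a morphism $\psi\colon\mmnm\to\mathcal{C}(n,m,2)$ carrying $\Delta$ to $E_1$.

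Finally I would prove that $\psi$ is an isomorphism. Off the boundary $\psi$ is already an isomorphism onto $\sec_2^{(1)}(\mathcal{S}^{n,m})\setminus E_1$, so the whole point is the restriction $\psi|_{\Delta}\colon\Delta\to E_1$. By Theorem \ref{comp} and Proposition \ref{tcones} the exceptional divisor $E_1\to\mathcal{S}^{n,m}$ is a fibration with fibres $\mathcal{S}^{n-1,m-1}\cong\mathbb{P}^{n-1}\times\mathbb{P}^{m-1}$, and $\psi|_{\Delta}$ commutes with the two projections to $\mathcal{S}^{n,m}$. On each fibre the normal direction at $(p,q)$ is a rank one element of the lower $n\times m$ block, namely a pair consisting of a line through $p$ and a line through $q$, from which one recovers the nodal datum $(\ell_1,p,\ell_2,q)$; this gives an explicit fibrewise inverse and shows $\psi|_{\Delta}$ is birational onto $E_1$. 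As in Proposition \ref{isoQ}, $\psi|_{\Delta}$ cannot contract a divisor, since the two fibrations have the same relative Picard rank over $\mathcal{S}^{n,m}$, nor a higher codimension locus, since the target $\mathcal{C}(n,m,2)$ is smooth; hence $\psi$ is a finite birational morphism between normal varieties and therefore an isomorphism by \cite[Chapter 3, Section 9]{Mum99}. The main obstacle is this last step: carefully matching the two $\mathbb{P}^{n-1}\times\mathbb{P}^{m-1}$-fibrations $\Delta$ and $E_1$ and exhibiting the fibrewise inverse, together with the degeneration computation that pins down the limit tensor along $\Delta$.
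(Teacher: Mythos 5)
Your proposal is correct in outline, but it runs in the opposite direction from the paper's proof and uses different tools, so it is worth comparing. The paper first treats $n=m=1$ explicitly, identifying $\mathcal{C}(1,1,2)\cong\mathbb{P}^3$ with $\overline{M}_{0,0}(\mathbb{P}^1\times\mathbb{P}^1,(1,1))\cong\mathbb{P}^3$ via $Z\mapsto C_Z=\{(x_0,x_1)Z(y_0,y_1)^t=0\}$, and then builds a map $\delta^{o}\colon\mathcal{C}(n,m,2)^{o}\to M_{0,0}(\mathbb{P}^n\times\mathbb{P}^m,(1,1))$ by sending a rank two matrix to the collineation it induces on $(\text{image})\times(\text{dual of kernel})$, reducing fibrewise to the $2\times 2$ case. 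Crucially, the extension over the boundary is not obtained by a limit computation: the inverse of $\delta^{o}$ is identified with the map induced by the restriction of the tangency class $\mathcal{T}$ from $\overline{M}_{0,0}(\mathbb{P}^N,2)$, which is base point free by \cite[Theorem 1.2]{CHS09}, so it is automatically a morphism $\eta$ on all of $\mmnm$; the conclusion then follows from the equality of Picard ranks (Proposition \ref{picrank} and (\ref{picmmnm})), smoothness of $\mathcal{C}(n,m,2)$, and Zariski's main theorem, with no fibrewise analysis of $\eta|_{\Delta}$ at all. Your route --- conic $\mapsto$ defining rank two tensor, extension over $\Delta$, lift through the blow-up via \cite[Proposition 7.14]{Har77}, and matching of the two $\mathbb{P}^{n-1}\times\mathbb{P}^{m-1}$-fibrations $\Delta\to\mathcal{S}^{n,m}$ and $E_1^{\mathcal{C}}\to\mathcal{S}^{n,m}$ from Proposition \ref{tcones} --- is viable, more geometric, and works uniformly in $n,m$ without isolating the case $n=m=1$; what the paper's approach buys is that the two delicate extension steps are replaced by a single citation of base point freeness and a Picard rank count.

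The step you should not wave at is not the one you flag as the main obstacle, but the assertion that $\phi^{o}$ ``extends to a morphism'' because one-parameter limits of the tensor exist and equal the rank one tensor at the node. Well-definedness of arc limits does not by itself extend a rational map; you need either to run Zariski's main theorem on the closure of the graph, using that $\mmnm$ is normal (in fact smooth here, since stable maps of class $(1,1)$ have no nontrivial automorphisms) and that every fibre of the graph projection is a single point, or to realize $\phi$ by an explicitly base point free linear system as the paper does. Likewise, for the lift through the blow-up you need the inverse image \emph{ideal sheaf} of $\mathcal{S}^{n,m}$ to be invertible, i.e.\ the scheme-theoretic, not merely set-theoretic, preimage must be a multiple of the Cartier divisor $\Delta$; a local computation at a nodal conic settles this. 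Finally, keep the dualizations consistent: the conic is cut out by a bilinear form, so the tensor it determines naturally lives in $V_2^{*}\otimes W_2^{*}$, and your identification with a point of $\sec_2(\mathcal{S}^{n,m})\subset\mathbb{P}(V\otimes W)$ must be normalized to agree with the paper's convention (image of $Z$ in $\mathbb{P}^n$, dual of the kernel in $\mathbb{P}^{m*}$). None of these is fatal, but they are where the actual work sits.
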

\begin{proof}
First consider the case $n = m = 1$. We have that $\overline{M}_{0,0}(\mathbb{P}^1\times\mathbb{P}^1,(1,1))\cong\mathbb{P}^3$. Indeed, we may embed $\mathbb{P}^1\times\mathbb{P}^1$ in $\mathbb{P}^3$ as a smooth quadric $Q$, and the conics in $Q$ are in bijection with the hyperplanes in $\mathbb{P}^3$. 

Furthermore, $\mathcal{C}(1,1,2)\cong\mathbb{P}^3$ as well, and we may write down explicitly as isomorphism $\mathcal{C}(1,1,2)\rightarrow \overline{M}_{0,0}(\mathbb{P}^1\times\mathbb{P}^1,(1,1))$ as follows: write a point of $\mathcal{C}(1,1,2)$ as a $2\times 2$ matrix $Z$, fix homogeneous coordinates $([x_0:x_1],[y_0:y_1])$ on $\mathbb{P}^1 \times \mathbb{P}^1$, and associate to $Z$ the conic $C_Z=\{(x_0,x_1)\cdot Z\cdot (y_0,y_1)^t=0\} \subset\mathbb{P}^1 \times \mathbb{P}^1$.

Now, let $Z\in\sec_2(\mathcal{S}^{n,m}) \setminus \mathcal{S}^{n,m}$ an $(n+1) \times (m+1)$ matrix of rank two. The image of $Z$ yields a line $L_{Z}$ in $\mathbb{P}^n$, and the dual of the kernel of $Z$ gives a line $R_Z$ in $\mathbb{P}^{m*}$. Hence, we get a morphism
$$
\begin{array}{cccc}
\gamma^{o}: &\mathcal{C}(n,m,2)^{o}& \longrightarrow & \mathbb{G}(1,n)\times \mathbb{G}(1,m)\\
      & Z & \longmapsto & (L_Z,R_Z).
\end{array}
$$ 
The fiber of $\gamma^{o}$ over $(L_Z,R_Z)$ can be identified with the collineations on $L_Z\times R_Z$. To see this we argue as follows. Acting with $SL(n+1)\times SL(m+1)$ on $\mathbb{G}(1,n)\times \mathbb{G}(1,m)$ we may assume that $L_Z = \{x_2 = \dots = x_n = 0\}$ and $R_Z = \{y_2 = \dots = y_m = 0\}$. Hence, in $(\gamma^{o})^{-1}(L_Z,R_Z)$ we have the matrices annihilating the vectors $(0,0,1,0,\dots,0),\dots,(0,0,\dots,0,1)$ and whose image is generated by the vectors $(1,0,0,\dots,0),(0,1,0,\dots,0)$ that is matrices of the following form
$$
Z  = \left(\begin{array}{cc}
\overline{Z} & 0_{2,m-1} \\ 
0_{n-1,2} & 0_{n-1,m-1}
\end{array}\right),\: \text{with} \: 
\overline{Z}  = \left(\begin{array}{cc}
z_{0,0} & z_{0,1} \\ 
z_{1,0} & z_{1,1}
\end{array}\right).
$$
By the first part of the proof the collineations on $L_Z\times R_Z$ are in bijection with $\overline{M}_{0,0}(L_Z\times R_Z,(1,1))\subseteq \overline{M}_{0,0}(\mathbb{P}^n\times \mathbb{P}^m,(1,1))$. This yields an isomorphism
$$
\begin{array}{cccc}
\delta^{o}: &\mathcal{C}(n,m,2)^{o}& \longrightarrow & M_{0,0}(\mathbb{P}^n\times \mathbb{P}^m,(1,1))\\
      & Z & \longmapsto & C_Z.
\end{array}
$$ 
Now, consider the embedding $\overline{M}_{0,0}(\mathbb{P}^n\times \mathbb{P}^m,(1,1))\subset \overline{M}_{0,0}(\mathbb{P}^N,2)$. We will show that the inverse of $\delta^o$ is induced by the restriction to $M_{0,0}(\mathbb{P}^n\times \mathbb{P}^m,(1,1))$ of the divisor $\mathcal{T}$ on $\overline{M}_{0,0}(\mathbb{P}^N,2)$. Since $\mathcal{T}$ restricts on $\overline{M}_{0,0}(L_Z\times R_Z,(1,1))$ to the corresponding tangency divisor it is enough to show the claim for $\overline{M}_{0,0}(L_Z\times R_Z,(1,1))$. By the first part of the proof the correspondence between $\mathcal{C}(1,1,2)$ and $\overline{M}_{0,0}(\mathbb{P}^1\times \mathbb{P}^1,(1,1))$ is defined by mapping a matrix $Z = (z_{i,j})_{0\leq i,j\leq 1}$ to the divisor $C_Z = \{z_{0,0}x_0y_0+z_{0,1}x_0y_1+z_{1,0}x_1y_0+z_{1,1}x_1y_1\}\subset\mathbb{P}^1\times \mathbb{P}^1$ which in turn is mapped by the Segre embedding to the conic 
$$\overline{C}_Z = \{z_{0,0}X+z_{0,1}Y+z_{1,0}Z+z_{1,1}W = XW-YZ = 0\}\subset\mathbb{P}^3_{(X,Y,Z,W)}.$$ 
Now, considering the intersection of $\overline{C}_Z$ with the plane $\{W = 0\}$ we get the points $[z_{1,0}:0:-z_{0,0}:0]$ and $[z_{0,1}:-z_{0,0}:0:0]$. Therefore $\overline{C}_Z$ is tangent to $\{W = 0\}$ if and only if $z_{0,0} = 0$ that is if only if the matrix $Z$ lies on the hyperplane section $\{z_{0,0} = 0\}$ of $\mathbb{P}^N$. 

By \cite[Theorem 1.2]{CHS09} the divisor $\mathcal{T}$ is base point free and hence it restricts to a base point free divisor on $\overline{M}_{0,0}(\mathbb{P}^n\times \mathbb{P}^m,(1,1)$. Therefore, the inverse of $\delta^{o}$ is indeed a morphism 
$$
\begin{array}{cccc}
\eta: &\overline{M}_{0,0}(\mathbb{P}^n\times \mathbb{P}^m,(1,1))& \longrightarrow & \mathcal{C}(n,m,2)\\
      & C_Z & \longmapsto & Z
\end{array}
$$   
mapping the boundary divisor $\Delta$ to $E_1^{\mathcal{C}}$. Moreover, by Propositions \ref{picrank} and (\ref{picmmnm}) we get that $\eta$ does not contract any divisor. Finally, since $\mathcal{C}(n,m,2)$ is smooth we conclude, by \cite[Chapter 3, Section 9]{Mum99}, that $\eta$ is an isomorphism. 
\end{proof}

\begin{Remark}
Via the isomorphism 
$$\eta^{-1}:\mathcal{C}(n,m,2) \rightarrow \overline{M}_{0,0}(\mathbb{P}^n\times \mathbb{P}^m,(1,1))$$
we have 
$$\eta^{-1*}(\Delta) = E_1^{\mathcal{C}},\:\eta^{-1*}(\mathcal{K}^{n}) = H_1^{\mathcal{C}},\: \eta^{-1*}(\mathcal{K}^{m}) = H_2^{\mathcal{C}},\: \eta^{-1*}(\mathcal{K}^{n,m}) = D_1^{\mathcal{C}}.$$
These equalities together with Proposition \ref{mcd_C} give that for $n=1<m$, the Mori chamber decomposition of $\mmnm$ has two chambers delimited by $\Delta,\mathcal{K}^{n,m}$ and $\mathcal{K}^{n,m},\mathcal{K}^{m}$, while for $1<n \le m$ the Mori chamber decomposition of $\mmnm$ has three chambers delimited respectively by the divisors $\mathcal{K}^{n},\mathcal{K}^{m},\mathcal{K}^{n,m}$, the divisors $\mathcal{K}^{n},\mathcal{K}^{n,m},\Delta$ and the divisors $\mathcal{K}^{m},\mathcal{K}^{n,m},\Delta$.

Recall that a divisor of class $\mathcal{K}^{n}$ parametrizes stable maps $\alpha:\mathbb{P}^1\rightarrow\mathbb{P}^n\times\mathbb{P}^m$ intersecting a codimension two cycle of class $H_n^2$. These curves are mapped via the projection onto $\mathbb{P}^n$ to lines intersecting a fixed codimension two linear subspace of $\mathbb{P}^n$. Call $L_{\alpha}$ the line corresponding to the stable map $\alpha$. Note that these lines correspond in turn to a hyperplane section of the Grassmannian $\mathbb{G}(1,n)$ in its Pl\"ucker embedding. Hence, the semi-ample divisor $\mathcal{K}^{n}$ induces a morphism $\overline{M}_{0,0}(\mathbb{P}^n\times \mathbb{P}^m,(1,1))\rightarrow \mathbb{G}(1,n)$ associating to a map $[\mathbb{P}^1,\alpha]\in M_{0,0}(\mathbb{P}^n\times \mathbb{P}^m,(1,1))$ the line $L_{\alpha}$. Then by the proof of Proposition \ref{isoC} $H_1^{\mathcal{C}}$ yields a morphism $\mathcal{C}(n,m,2)\rightarrow\mathbb{G}(1,n)$ associating to a matrix $Z\in \mathcal{C}(n,m,2)^{o}$ the projectivization of its image.

Similarly, $\mathcal{K}^{m}$ induces a morphism $\overline{M}_{0,0}(\mathbb{P}^n\times \mathbb{P}^m,(1,1))\rightarrow \mathbb{G}(1,m)$ associating to a map $[\mathbb{P}^1,\alpha]\in M_{0,0}(\mathbb{P}^n\times \mathbb{P}^m,(1,1))$ the line $R_{\alpha}$ given by projecting the image of $\alpha$ to $\mathbb{P}^m$, and $H_2^{\mathcal{C}}$ yields a morphism $\mathcal{C}(n,m,2)\rightarrow\mathbb{G}(1,m)$ associating to a matrix $Z\in \mathcal{C}(n,m,2)^{o}$ the projectivization of the dual of its kernel.
\end{Remark}

\subsection{Conics in $\mathbb{G}(1,n)$}
Let $\mathbb{G}(1,n)$ be the Grassmannian of lines in $\mathbb{P}^n$. Following \cite[Section 2]{CC10} we describe divisor classes on $\overline{M}_{0,0}(\mathbb{G}(1,n),2)$. Fix projective subspaces $\Pi^{n-1},\Pi^{n-3}\subset\mathbb{P}^n$ of dimension $n-1$ and $n-3$, and consider the Schubert cycles
$$
\begin{array}{lll}
\sigma_{1,1}^{1,n} & = & \{W\in\mathbb{G}(1,n)\: | \: \dim(W\cap\Pi^{n-1})\geq 1\};\\ 
\sigma_{2}^{1,n} & = & \{W\in\mathbb{G}(1,n)\: | \: \dim(W\cap\Pi^{n-3})\geq 0\}.
\end{array} 
$$
Let $\pi:\overline{M}_{0,1}(\mathbb{G}(1,n),2)\rightarrow\overline{M}_{0,0}(\mathbb{G}(1,n),2)$ be the forgetful morphism and $ev:\overline{M}_{0,1}(\mathbb{G}(1,n),2)\rightarrow\mathbb{G}(1,n)$ the evaluation morphism. We define
$$H_{\sigma_{1,1}}^{1,n} = \pi_{*}ev^{*}\sigma_{1,1}, \: H_{\sigma_2}^{1,n} = \pi_{*}ev^{*}\sigma_2.$$
Furthermore, we will denote by $T^{1,n}$ the class of the divisor of conics that are tangent to a fixed hyperplane section of $\mathbb{G}(1,n)$.

Let $D_{deg}^{1,n}$ be the class of the divisor of maps $[C,\alpha]\in \overline{M}_{0,0}(\mathbb{G}(1,n),2)$ such that the projection of the span of the linear spaces parametrized by $\alpha(C)$ from a fixed subspace of dimension $n-4$ has dimension less than three.

Next we define the divisor class $D_{unb}^{1,n}$. A stable map $\alpha:\mathbb{P}^1\rightarrow\mathbb{G}(1,n)$ induces  a rank two subbundle $\mathcal{E}_{\alpha}\subset \mathcal{O}_{\mathbb{P}^1}\otimes\mathbb{C}^{n+1}$. We define $D_{unb}^{1,n}$ as the closure of the locus of maps $[\mathbb{P}^1,\alpha]\in \overline{M}_{0,0}(\mathbb{G}(1,n),2)$ such that $\mathcal{E}_{\alpha}\neq \mathcal{O}_{\mathbb{P}^1}(-1)^{\oplus 2}$.

Finally, we denote by $\Delta^{k,n}$ the boundary divisor parametrizing stable maps with reducible domain. 

\begin{Proposition}\label{mapgr}
There is a finite $2$-to-$1$ morphism
$$\varphi: \mmgu \longrightarrow \sec_4^{(2)}(\mathcal{V}^n)$$ 
mapping a stable map $[\mathbb{P}^1,\alpha]\in M_{0,0}(\mathbb{G}(1,n),2)$ to the rank four quadric $Q_C^{*} = \bigcup_{p\in Q_C}(T_pQ)^{*}\subset\mathbb{P}^{n*}$, where $Q_C = \bigcup_{[L]\in \alpha(\mathbb{P}^1)}L$.
\end{Proposition}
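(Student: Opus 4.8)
The plan is to construct $\varphi$ first on the interior $M_{0,0}(\mathbb{G}(1,n),2)$, read off its degree there, then extend it across the boundary as a morphism onto the blow--up $\sec_4^{(2)}(\mathcal{V}^n)$, extracting finiteness at the end from properness together with a fibre analysis.

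For the interior map I would start from the rank two subbundle $\mathcal{E}_{\alpha}$ attached to a stable map $[\mathbb{P}^1,\alpha]$. When $\mathcal{E}_{\alpha}=\mathcal{O}_{\mathbb{P}^{1}}(-1)^{\oplus 2}$ the ruled surface $Q_C=\bigcup_{[L]\in\alpha(\mathbb{P}^1)}L$ is a smooth quadric surface spanning a $3$--plane $\mathbb{P}^3\subseteq\mathbb{P}^n$, so as a quadric of $\mathbb{P}^n$ it is the cone over a smooth surface quadric and has rank four; since duality preserves the rank of a quadric, $Q_C^{*}\subset\mathbb{P}^{n*}$ is again rank four and defines a point of $\sec_4(\mathcal{V}^n)\setminus\sec_3(\mathcal{V}^n)$. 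This gives a morphism on the balanced open locus, and by Remark \ref{sec_ver} both $\mmgu$ and $\sec_4(\mathcal{V}^n)$ have dimension $4n-3$, so the map is generically finite. To compute the degree I would run the construction backwards: a general rank four quadric dualises to a smooth surface quadric $Q$, which carries exactly two rulings by lines of $\mathbb{P}^n$; each ruling is a conic in $\mathbb{G}(1,n)$, and these are precisely the two preimages. Hence $\varphi$ has degree two, the deck involution being the interchange of the two rulings --- consistent with the extra factor $S_2$ in $\Aut(\mmgu)$ recorded in Theorem \ref{B}.

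For the extension, the assignment defines only a rational map $\mmgu\map\sec_4(\mathcal{V}^n)$, and I would resolve its indeterminacy by pulling the target up to the blow--up. The mechanism is the universal property of blowing up \cite[Proposition 7.14]{Har77}: it suffices to show that the scheme--theoretic preimages of $\mathcal{V}^n$ and of $\sec_2(\mathcal{V}^n)$ are Cartier divisors, after which the map factors through a morphism $\varphi\colon\mmgu\to\sec_4^{(2)}(\mathcal{V}^n)$. Here I would analyse how $Q_C$ degenerates as $\alpha$ moves to the boundary divisor $\Delta^{1,n}$ (reducible domain, two pencils of lines sweeping a pair of planes, hence a rank two quadric) and to the unbalanced divisor $D_{unb}^{1,n}$, matching these loci to the exceptional divisors $E_1^{\mathcal{Q}}$ and $E_2^{\mathcal{Q}}$; Proposition \ref{tcones} controls the normal directions recorded by each blow--up and is what should make the relevant preimages divisorial. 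The semi--ample class realising $\varphi$ is the pullback of $\mathcal{O}_{\mathbb{P}^{N_{+}}}(1)$, essentially the Schubert class $H_{\sigma_{1,1}}^{1,n}$ corrected by the boundary, whose base point freeness is consistent with the nef cone descriptions of \cite{CC10},\cite{CC11}.

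Finally, finiteness would follow from properness of $\varphi$ (automatic, $\mmgu$ being proper and $\sec_4^{(2)}(\mathcal{V}^n)$ separated) together with quasi--finiteness, whence $\varphi$ is finite of degree two. The hard part is exactly this last point taken together with the extension: over $\sec_4(\mathcal{V}^n)\setminus\sec_3(\mathcal{V}^n)$ the fibres are manifestly the two rulings, but over the deeper secant strata and over $E_1^{\mathcal{Q}},E_2^{\mathcal{Q}}$ one must rule out contracted curves inside $\Delta^{1,n}\cup D_{unb}^{1,n}$ and verify that the blow--up data --- the point of $E_i^{\mathcal{Q}}$, i.e. the extra normal/limit direction --- pins down the degenerate conic up to finitely many choices. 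Controlling these fibres, equivalently checking that the two blow--ups of the target are exactly what is needed to convert the generically two--to--one rational map into a finite morphism, is where I expect the real work to lie.
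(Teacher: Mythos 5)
Your description of $\varphi$ on the open locus $M_{0,0}(\mathbb{G}(1,n),2)$ --- the ruled quadric surface $Q_C$ spanning a $\mathbb{P}^3$, its dual rank-four quadric in $\mathbb{P}^{n*}$, and the degree-two count coming from the two rulings --- is correct and coincides with what the paper records at the end of its proof, as does your matching of $\Delta^{1,n}$ and $D_{unb}^{1,n}$ with $E_2^{\mathcal{Q}}$ and $E_1^{\mathcal{Q}}$. The problem is that everything you correctly identify as ``where the real work lies'' is in fact left undone: you never show that the assignment extends from a rational map to a morphism on all of $\mmgu$, never verify that the inverse image ideals of $\mathcal{V}^n$ and of the strict transform of $\sec_2(\mathcal{V}^n)$ are invertible (which is what \cite[Proposition 7.14]{Har77} actually requires, and it requires a \emph{morphism} to the base, not a rational map), and never establish quasi-finiteness over the boundary. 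As written, the proposal is a plan whose load-bearing steps are all deferred, so it does not constitute a proof.

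The missing idea is the intermediate variety through which the paper factors the map: the double symmetric determinantal locus $\mathcal{T}_4^n$ of \cite{HT15}. By \cite[Proposition 4.10, Theorem 5.1, Corollary 5.4]{CM17} there is a birational \emph{morphism} $f:\mmgu\rightarrow\mathcal{T}_4^n$ contracting $D_{deg}^{1,n}$ and $\Delta^{1,n}$, and by \cite[Proposition 2.3]{HT15} a finite $2$-to-$1$ morphism $\rho:\mathcal{T}_4^n\rightarrow\sec_4(\mathcal{V}^n)$ branched along $\sec_3(\mathcal{V}^n)$. The composition $\rho\circ f$ is then an honest morphism to $\sec_4(\mathcal{V}^n)$ defined everywhere, the universal property of blow-up is applied to it to produce $\varphi$ into $\sec_4^{(2)}(\mathcal{V}^n)$, and the $2$-to-$1$ and finiteness assertions are inherited from $\rho$ rather than extracted from a fibre analysis over the deep secant strata. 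In other words, the extension and finiteness problems you flag are not solved by the tangent-cone and semi-ampleness considerations you gesture at; they are imported wholesale from the existing structure theory of $\mathcal{T}_4^n$ and of the contraction $f$. Without that input (or an equivalent amount of work reproving it), your argument does not close.
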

\begin{proof}
By \cite[Proposition 4.10, Theorem 5.1, Corollary 5.4]{CM17} there is a birational morphism $f:\mmgu\rightarrow\mathcal{T}_4^n$, contracting $D_{deg}^{1,n}$ and $\Delta^{1,n}$, where $\mathcal{T}_4^n$ is  the double symmetric determinantal locus of rank at most four constructed in \cite[Section 2.2]{HT15}. By \cite[Proposition 2.3]{HT15} there is a finite $2$-to-$1$ morphism $\rho:\mathcal{T}_4^n\rightarrow\sec_4(\mathcal{V}^n)$ branched along $\sec_3(\mathcal{V}^n)$. 

Now, consider the morphism $\rho\circ f:\mmgu\rightarrow \sec_4(\mathcal{V}^n)$. By \cite[Proposition 7.14]{Har77} there is a morphism $\varphi: \mmgu\rightarrow \sec_4^{(2)}(\mathcal{V}^n)$ such that $\pi\circ\varphi = \rho\circ f$, where $\pi: \sec_4^{(2)}(\mathcal{V}^n)\rightarrow\sec_4(\mathcal{V}^n)$ is the blow-down. 

Hence $\varphi$ is $2$-to-$1$ and by \cite[Theorem 1.1]{HT15} on $M_{0,0}(\mathbb{G}(1,n),2)$ it is defined by
$$
\begin{array}{lclc}
\varphi_{|M_{0,0}(\mathbb{G}(1,n),2)}: & M_{0,0}(\mathbb{G}(1,n),2) & \longrightarrow & \sec_4^{(2)}(\mathcal{V}^n)\\ 
 & [\mathbb{P}^1,\alpha] & \mapsto & Q_C^{*}
\end{array} 
$$
where $Q_C^{*} = \bigcup_{p\in Q_C}(T_pQ)^{*}\subset\mathbb{P}^{n*}$, and $Q_C = \bigcup_{[L]\in \alpha(\mathbb{P}^1)}L$. Note that $Q_C^{*}$ is indeed a quadric hypersurface of rank four, and since $Q_C$ can be constructed from either of its two rulings $\varphi_{|M_{0,0}(\mathbb{G}(1,n),2)}$ is $2$-to-$1$. 
\end{proof}

\begin{Remark}
For $n = 3$ the double cover in Proposition \ref{mapgr} had been constructed in \cite[Section 5]{Ce15}.
\end{Remark}

\begin{Remark}\label{CoCh}
As an application of Propositions \ref{MCD_G}, \ref{mapgr} we recover some results of \cite{CC10}. Indeed, on $\overline{M}_{0,0}(\mathbb{G}(1,n),2)$ there is an $SL(n+1)$-action given by 
$$
\begin{array}{cll}
SL(n+1) \times \overline{M}_{0,0}(\mathbb{G}(1,n),2) & \longrightarrow & \overline{M}_{0,0}(\mathbb{G}(1,n),2) \\ 
(M, [C, \alpha]) & \longmapsto & [C, \wedge^{2}M\circ\alpha] 
\end{array} 
$$
inducing on $\overline{M}_{0,0}(\mathbb{G}(1,n),2)$ a structure of spherical variety. 

Considering the subspace $H = \{x_4 = \dots = x_n=0\}\subset\mathbb{P}^n$ we get an embedding $i:\mathbb{G}(1,H) \hookrightarrow  \mathbb{G}(1,n)$ which in turn induces an embedding $j:\overline{M}_{0,0}(\mathbb{G}(1,3),2)\rightarrow\overline{M}_{0,0}(\mathbb{G}(1,n),2)$. Furthermore, the pull-back map $j^{*}:\Pic(\overline{M}_{0,0}(\mathbb{G}(1,n),2))\rightarrow\Pic(\overline{M}_{0,0}(\mathbb{G}(1,3),2)$ is an isomorphism. This reduces the study of the birational geometry of $\overline{M}_{0,0}(\mathbb{G}(1,n),2)$ to that of $\overline{M}_{0,0}(\mathbb{G}(1,3),2)$.

By Proposition \ref{MCD_G} and the $2$-to-$1$ morphism in Proposition \ref{mapgr} we get that the divisor classes $\Delta^{1,n}, D_{deg}^{1,n}, D_{unb}^{1,n}$ and the divisor classes $H_{\sigma_{1,1}}^{1,n},H_{\sigma_2}^{1,n},T^{1,n}$ are respectively the classes of the boundary divisors and the colors of the spherical variety $\overline{M}_{0,0}(\mathbb{G}(1,n),2)$. 

Furthermore, the divisors classes $D_{unb}^{1,n}, D_{deg}^{1,n}, \Delta^{1,n}$ generate the effective cone of $\overline{M}_{0,0}(\mathbb{G}(1,n),2)$. The Cox ring of $\overline{M}_{0,0}(\mathbb{G}(1,n),2)$ is generated by the global sections of the divisors $\Delta^{1,n}, D_{deg}^{1,n}, D_{unb}^{1,n}$ and $H_{\sigma_{1,1}}^{1,n},H_{\sigma_2}^{1,n},T^{1,n}$.

The nef cone of $\overline{M}_{0,0}(\mathbb{G}(1,n),2)$ is generated by $H_{\sigma_{1,1}}^{1,n}, H_{\sigma_2}^{1,n}, T^{1,n}$.  Moreover, the following is a $2$-dimensional section of the Mori chamber decomposition of $\Eff(\overline{M}_{0,0}(\mathbb{G}(1,n),2))$
$$
\begin{tikzpicture}[xscale=0.4,yscale=0.7][line cap=round,line join=round,>=triangle 45,x=1cm,y=1cm]\clip(-14.9,-0.21) rectangle (14.5,6.45);\fill[line width=0pt,fill=black,fill opacity=0.15] (-5.000432432432432,2.4614054054054053) -- (5.000432432432432,2.4614054054054053) -- (0,4) -- cycle;\fill[line width=0pt,color=wwwwww,fill=white,fill opacity=0.15] (-5.000432432432432,2.4614054054054053) -- (5.000432432432432,2.4614054054054053) -- (0,1.7776389756402244) -- cycle;\draw [line width=0.1pt] (-13,0)-- (13,0);\draw [line width=0.1pt] (13,0)-- (0,6);\draw [line width=0.1pt] (0,6)-- (-13,0);\draw [line width=0.1pt] (0,4)-- (-13,0);\draw [line width=0.1pt] (0,4)-- (13,0);\draw [line width=0.1pt] (-5.000432432432432,2.4614054054054053)-- (13,0);\draw [line width=0.1pt] (5.000432432432432,2.4614054054054053)-- (-13,0);\draw [line width=0.1pt] (-5.000432432432432,2.4614054054054053)-- (5.000432432432432,2.4614054054054053);\draw [line width=0.1pt] (-5.000432432432432,2.4614054054054053)-- (0,6);\draw [line width=0.1pt] (0,6)-- (5.000432432432432,2.4614054054054053);\draw [line width=0.1pt] (0,4)-- (0,6);\begin{scriptsize}\draw [fill=black] (-13,0) circle (0pt);\draw[color=black] (-14.0,0.3) node {$D_{unb}^{1,n}$};\draw [fill=black] (13,0) circle (0pt);\draw[color=black] (13.6,0.3) node {$D_{deg}^{1,n}$};\draw [fill=black] (0,6) circle (0pt);\draw[color=black] (0.18536585365853658,6.2) node {$\Delta^{1,n}$};\draw [fill=black] (0,4) circle (0pt);\draw[color=black] (0.95,4.2) node {$T^{1,n}$};\draw [fill=black] (-5.000432432432432,2.4614054054054053) circle (0pt);\draw[color=black] (-5.95,2.7) node {$H_{\sigma_{1,1}}^{1,n}$};\draw [fill=black] (5.000432432432432,2.4614054054054053) circle (0pt);\draw[color=black] (5.75,2.7) node {$H_{\sigma_2}^{1,n}$};\draw [fill=uuuuuu] (0,1.7776389756402244) circle (0pt);\draw[color=uuuuuu] (0.18536585365853658,1.4) node {$P^{1,n}$};\end{scriptsize}\end{tikzpicture}
$$
where $P^{1,n}\sim \frac{1}{4}(3H_{\sigma_{1,1}}^{1,n}+3H_{\sigma_2}^{1,n}-\Delta^{1,n})$, and $\Mov(\overline{M}_{0,0}(\mathbb{G}(1,n),2))$ is generated by $H_{\sigma_{1,1}}^{1,n}, H_{\sigma_2}^{1,n}, T^{1,n}, P^{1,n}$. 
\end{Remark}

We have the following result on the automorphisms of Kontsevich spaces of conics.

\begin{Corollary}\label{aut_M}
We have that 
$$
\Aut(\overline{M}_{0,0}(\mathbb{P}^n\times\mathbb{P}^m,(1,1))) \cong
\left\lbrace\begin{array}{ll}
PGL(n+1)\times PGL(m+1) & \text{if n}< \textit{m};\\ 
S_2 \ltimes (PGL(n+1)\times PGL(n+1)) & \text{if n} = \textit{m}\geq 2;
\end{array}\right.
$$
and $\Aut(\overline{M}_{0,0}(\mathbb{P}^1\times\mathbb{P}^1,(1,1)))\cong PGL(4)$.

Furthermore, $\Aut(\overline{M}_{0,0}(\mathbb{P}^n,2))\cong PGL(n+1)$ for $n\geq 3$, $\Aut(\overline{M}_{0,0}(\mathbb{P}^2,2))\cong PGL(3)\rtimes S_2$, and $\Aut(\overline{M}_{0,0}(\mathbb{P}^1,2))\cong PGL(3)$. 
\end{Corollary}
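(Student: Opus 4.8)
The plan is to transport the automorphism computations of Theorem \ref{autgr} across the isomorphisms established in Propositions \ref{isoC} and \ref{isoQ}. Since the automorphism group is an invariant of the abstract variety, each of the isomorphisms $\overline{M}_{0,0}(\mathbb{P}^n\times\mathbb{P}^m,(1,1))\cong\mathcal{C}(n,m,2)$ and $\overline{M}_{0,0}(\mathbb{P}^n,2)\cong\sec_3^{(1)}(\mathcal{V}^n)$ induces an isomorphism of automorphism groups; thus the whole task reduces to matching each Kontsevich space with the correct clause of Theorem \ref{autgr}, the only care being needed at the boundary values of $n$ where the relevant secant rank equals $n+1$.

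First I would treat the product case. By Proposition \ref{isoC}, $\overline{M}_{0,0}(\mathbb{P}^n\times\mathbb{P}^m,(1,1))\cong\mathcal{C}(n,m,2) = \sec_2^{(1)}(\mathcal{S}^{n,m})$, with the standing convention $n\le m$. For $2\le n\le m$ the parameters $h = 2\le n$, $k = 1$ lie in the range of the first part of Theorem \ref{autgr}, giving $PGL(n+1)\times PGL(m+1)$ when $n<m$ and $S_2\ltimes(PGL(n+1)\times PGL(n+1))$ when $n = m$. For $n = 1<m$ one has $h = 2 = n+1$, so I would instead invoke the $h = n+1$ clause of Theorem \ref{autgr} for $\mathcal{C}(1,m,2) = \mathcal{C}(n,m,n+1)$, which returns $PGL(2)\times PGL(m+1)$ and so agrees with the stated $n<m$ formula. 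The remaining case $n = m = 1$ uses $\mathcal{C}(1,1,2)\cong\mathbb{P}^3$ together with the clause $\Aut(\mathcal{C}(1,1,2))\cong PGL(4)$.

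Next I would handle $\overline{M}_{0,0}(\mathbb{P}^n,2)$ using Proposition \ref{isoQ}. For $n\ge 3$ the parameters $h = 3\le n$, $k = 1$ again fall within Theorem \ref{autgr}, yielding $PGL(n+1)$. For $n\in\{1,2\}$, where $h = 3$ exceeds $n$, the key observation is that $\sec_2(\mathcal{V}^n)$ is already a divisor ($\sec_2(\mathcal{V}^2)$ is the discriminant cubic in $\mathbb{P}^5$ and $\sec_2(\mathcal{V}^1) = \mathbb{P}^2$), so its strict transform after blowing up $\mathcal{V}^n$ is a Cartier divisor on a smooth variety and the further blow-up defining $\mathcal{Q}(n,3)$ is an isomorphism. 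Hence $\sec_3^{(1)}(\mathcal{V}^2)\cong\mathcal{Q}(2,3)$ and $\sec_3^{(1)}(\mathcal{V}^1)\cong\mathcal{Q}(1,2)\cong\mathbb{P}^2$, and the $h = n+1$ clauses of Theorem \ref{autgr} supply $PGL(3)\rtimes S_2$ and $PGL(3)$ respectively.

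The main obstacle is bookkeeping rather than conceptual: one must correctly recognise that the extremal values of $n$ and of $(n,m)$ correspond to the $h = n+1$ regime of Theorem \ref{autgr}, and in particular justify the reduction $\sec_3^{(1)}(\mathcal{V}^n)\cong\mathcal{Q}(n,3)$ for $n\le 2$ via the triviality of a blow-up along a Cartier divisor. With these identifications in hand, the isomorphisms of Propositions \ref{isoC} and \ref{isoQ} propagate the automorphism groups with no further computation.
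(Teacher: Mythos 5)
Your proposal is correct and follows essentially the same route as the paper: transport the automorphism groups across the isomorphisms of Propositions \ref{isoC} and \ref{isoQ} and read the answer off Theorem \ref{autgr}, with the extremal values of $n$ and $(n,m)$ recognised as the $h=n+1$ regime. The one place you diverge is $\overline{M}_{0,0}(\mathbb{P}^1,2)$: you invoke Proposition \ref{isoQ} for $n=1$ together with the triviality of the blow-ups, whereas the paper argues directly that $\overline{M}_{0,0}(\mathbb{P}^1,2)\cong\mathbb{P}^2$ via the branch locus of a double cover; since the proof of Proposition \ref{isoQ} is phrased in terms of dual conics of rank-three quadrics (and uses $\Gamma$ as a $\mathbb{P}^2$-bundle over $\mathbb{G}(1,n)$), it implicitly assumes $n\geq 2$, so for $n=1$ you should supply the direct identification rather than cite that proposition. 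For $\overline{M}_{0,0}(\mathbb{P}^2,2)$ the paper simply quotes a known computation, while your reduction $\sec_3^{(1)}(\mathcal{V}^2)\cong\mathcal{Q}(2,3)$ followed by the $h=n+1$ clause of Theorem \ref{autgr} is a clean and valid alternative; the remaining cases match the paper's argument exactly.
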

\begin{proof}
The first claim on $\Aut(\overline{M}_{0,0}(\mathbb{P}^n\times\mathbb{P}^m,(1,1)))$ follows from Proposition \ref{isoC} and Theorem \ref{autgr}. For the second claim recall that $\overline{M}_{0,0}(\mathbb{P}^1\times\mathbb{P}^1,(1,1))\cong\mathbb{P}^3$ since curves of bidegree $(1,1)$ in $\mathbb{P}^1\times\mathbb{P}^1$ are in bijection with the hyperplane sections of a smooth quadric surface in $\mathbb{P}^3$. 

The automorphism group of $\overline{M}_{0,0}(\mathbb{P}^n,2)$ for $n\geq 3$ follows from Proposition \ref{isoQ} and Theorem \ref{autgr}. The automorphism group of $\overline{M}_{0,0}(\mathbb{P}^2,2)$ has been computed in \cite[Remark 7.6]{Ma18a}. Finally, to get the claim on $\Aut(\overline{M}_{0,0}(\mathbb{P}^1,2))$ notice that $\overline{M}_{0,0}(\mathbb{P}^1,2)\cong\mathbb{P}^2$. Indeed, a $2$-to-$1$ morphism $\mathbb{P}^1\rightarrow\mathbb{P}^1$ is determined by its branch locus, and so $\overline{M}_{0,0}(\mathbb{P}^1,2)$ is isomorphic to $\mathbb{P}^1\times\mathbb{P}^1$ mod out by the involution switching the factors.
\end{proof}

Finally, we compute the automorphism group of $\mmgu$. Since the cases $n = 2$ has been covered in Corollary \ref{aut_M} we assume that $n\geq 3$. 

\begin{Proposition}\label{aut_MG}
The automorphism group of $\mmgu$ is given by 
$$
\Aut(\mmgu) \cong
\left\lbrace\begin{array}{ll}
S_2 \ltimes PGL(n+1) & \text{if n}>  3;\\ 
S_2 \ltimes (S_2 \ltimes PGL(n+1)) & \text{if n} = 3.
\end{array}\right.
$$
\end{Proposition}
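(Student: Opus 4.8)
The plan is to run the same machine used in Corollary \ref{aut_M}: transport automorphisms across the $2$-to-$1$ cover $\varphi:\mmgu\to\sec_4^{(2)}(\mathcal{V}^n)$ of Proposition \ref{mapgr} and invoke the secant computation of Theorem \ref{autgr}. First I would produce the subgroups that must occur. Since $\Aut(\mathbb{G}(1,n))$ acts on $\mmgu$ by postcomposition (as in Remark \ref{CoCh}), and $\Aut(\mathbb{G}(1,n))\cong PGL(n+1)$ for $n\ge 4$ while $\Aut(\mathbb{G}(1,3))\cong S_2\ltimes PGL(4)$ (the extra $S_2$ being the duality $\delta$ of $\mathbb{P}^3$, realized on the quadric fourfold $\mathbb{G}(1,3)$ as the transpose--inverse outer automorphism of $PGL(4)$), this already accounts for the inner factors. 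The cover $\varphi$ then carries a deck involution $\iota$, sending a stable map to the one whose image sweeps out the \emph{other} ruling of the same rank-four quadric $Q_C$. As $\varphi$ is finite and both source and target have Picard rank three (Remark \ref{CoCh} and Proposition \ref{MCD_G}), $\varphi^*$ is an isomorphism on Néron--Severi spaces; since $\varphi\circ\iota=\varphi$ forces $\iota^*\varphi^*=\varphi^*$, the involution $\iota$ acts trivially on $N^1$, and $PGL(n+1)$-equivariance of $\varphi$ shows $\iota$ centralizes $PGL(n+1)$. This produces the central factor $S_2=\langle\iota\rangle$ and shows the claimed group $\mathcal{G}$ embeds in $\Aut(\mmgu)$.

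Next I would prove $\Aut(\mmgu)=\mathcal{G}$ for $n\ge 4$ by descent. Given $\phi\in\Aut(\mmgu)$, the aim is to view $\sec_4^{(2)}(\mathcal{V}^n)$ as the quotient $\mmgu/\langle\iota\rangle$ and push $\phi$ down. Once $\phi$ is known to normalize $\langle\iota\rangle$, it induces $\overline{\phi}\in\Aut(\sec_4^{(2)}(\mathcal{V}^n))\cong PGL(n+1)$ by Theorem \ref{autgr} (applicable since $4\le n$), whence $\phi$ and the automorphism induced by $\overline{\phi}$ differ by a deck transformation, giving $\phi\in\langle\iota\rangle\cdot PGL(n+1)$. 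With $\iota$ central this yields $\Aut(\mmgu)\cong S_2\ltimes PGL(n+1)$.

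The case $n=3$ needs a separate treatment because $\sec_4(\mathcal{V}^3)=\mathbb{P}^9$, so Theorem \ref{autgr} does not apply to $\sec_4^{(2)}(\mathcal{V}^3)$. Here I would organize the argument around the exact sequence $1\to\Aut_{N^1}(\mmgu)\to\Aut(\mmgu)\to\im(\Aut\to GL(N^1))\to 1$. The image records which symmetries of the Mori chamber decomposition of Remark \ref{CoCh} are realized: for $n>3$ the colors $H_{\sigma_{1,1}}^{1,n}$ and $H_{\sigma_2}^{1,n}$ are genuinely distinct and no automorphism exchanges them, so the image is trivial; for $n=3$ the duality $\delta$ exchanges the colors $H_{\sigma_{1,1}}^{1,3}$ (lines in a plane) and $H_{\sigma_2}^{1,3}$ (lines through a point), so the image is $S_2$. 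Computing $\Aut_{N^1}(\mmgu)\cong\langle\iota\rangle\times PGL(n+1)$ uniformly (again by descent through $\varphi$, now noting that the duality acts nontrivially on $N^1$ and hence does not enter $\Aut_{N^1}$), and assembling $\delta$ with this kernel, gives $\Aut(\mmgu)\cong\langle\iota\rangle\times\big(S_2\ltimes PGL(4)\big)\cong S_2\ltimes(S_2\ltimes PGL(4))$.

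The main obstacle is the descent step: showing that every $\phi$ normalizes the deck group $\langle\iota\rangle$, equivalently that $\iota$ is canonical. I would obtain this by proving $\Aut_{N^1}(\mmgu)=\langle\iota\rangle\times PGL(n+1)$ with $PGL(n+1)=\Aut^0(\mmgu)$ the identity component, so that $\langle\iota\rangle$ is the component group of $\Aut_{N^1}$ and is therefore characteristic; conjugation by any $\phi$ then fixes $\iota$. The delicate point is excluding further numerically trivial involutions, for which I would use that such an automorphism must preserve every fibre of the birational contraction $f:\mmgu\to\mathcal{T}_4^n$ of Proposition \ref{mapgr} and every fibre of $\varphi$, pinning it to the deck action together with the connected $PGL(n+1)$. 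Once $\iota$ is seen to be characteristic, the semidirect/central bookkeeping and the extraction of the duality factor for $n=3$ are routine.
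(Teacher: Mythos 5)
Your overall strategy is the same as the paper's: push automorphisms down the degree-two cover onto the secant side, use the known automorphism groups there, and account for the two $S_2$'s (the deck involution and, for $n=3$, the duality of $\mathbb{G}(1,3)$ acting on the extremal rays). The organization differs in one essential place, and that is where the gap sits. The paper does not descend directly through $\varphi$ to $\sec_4^{(2)}(\mathcal{V}^n)$; it first descends to the intermediate double symmetric determinantal locus $\mathcal{T}_4^n$ (using that an automorphism preserving the extremal rays preserves the semiample class defining the contraction $f$), and then uses that the double cover $\rho:\mathcal{T}_4^n\rightarrow\sec_4(\mathcal{V}^n)$ is induced by a multiple of $-K_{\mathcal{T}_4^n}$, so that \emph{every} automorphism of $\mathcal{T}_4^n$ automatically descends through $\rho$. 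This anticanonical observation is what replaces your ``$\iota$ is characteristic'' machinery, which as written is circular: you propose to prove that $\langle\iota\rangle$ is the component group of $\Aut_{N^1}(\mmgu)$ in order to justify the descent, but computing $\Aut_{N^1}(\mmgu)$ already requires knowing that numerically trivial automorphisms descend and what they descend to. (Also, a numerically trivial automorphism \emph{permutes} the fibres of $f$ and of $\rho\circ f$, covering an automorphism of the base; it does not preserve each fibre, so the fibre-preservation step needs the base computation first, not the other way around.) This part is repairable — preservation of the semiample class $(\rho\circ f)^{*}\mathcal{O}(1)$ does give the descent for numerically trivial automorphisms — but the genuine gap is in the case $n=3$, i.e.\ half the statement. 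You correctly note that Theorem \ref{autgr} does not apply to $\sec_4^{(2)}(\mathcal{V}^3)$, but then your ``uniform descent through $\varphi$'' has nothing to land on: the descended automorphism lives in $\Aut(\mathbb{P}^9)=PGL(10)$, and nothing in your argument cuts it down to $PGL(4)$. The missing step is that the descended automorphism must stabilize the branch divisor $\sec_3(\mathcal{V}^3)\subset\mathbb{P}^9$ of $\rho$ (equivalently, the centers $\mathcal{V}^3$ and $\sec_2(\mathcal{V}^3)$ of the blow-ups), together with the fact that the stabilizer of $\sec_3(\mathcal{V}^3)$ in $PGL(10)$ is exactly $PGL(4)$; this is precisely how the paper extracts the $PGL(4)$ in the exact sequence $1\rightarrow S_2\rightarrow\Aut(\mathcal{T}_4^3)\rightarrow PGL(4)\rightarrow 1$. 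Without it, your computation of $\Aut_{N^1}(\mmgu)$ for $n=3$ does not close.

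The remaining ingredients of your plan are sound and match the paper: the realization of the subgroups ($PGL(n+1)$ by postcomposition, the deck involution $\iota$ acting trivially on $N^1$ and centralizing $PGL(n+1)$ by connectedness, the duality $\tau$ for $n=3$ swapping $D_{unb}^{1,3}$ and $D_{deg}^{1,3}$), the triviality of the action on $N^1$ for $n>3$ because the colors cannot be exchanged, and the splitting of the resulting extensions. Once the $n=3$ branch-locus argument is inserted and the descent is rerouted through the semiample (or anticanonical) class rather than through an a priori characteristic deck group, your proof closes.
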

\begin{proof}
First, consider the case $n = 3$. An automorphism of $\overline{M}_{0,0}(\mathbb{G}(1,3),2)$ must either preserve or switch the extremal rays $D_{unb}^{1,3}$ and $D_{deg}^{1,3}$. Indeed, there is an automorphism $\tau:\overline{M}_{0,0}(\mathbb{G}(1,3),2)\rightarrow \overline{M}_{0,0}(\mathbb{G}(1,3),2)$ switching them, namely the automorphism induced by the involution of $\mathbb{G}(1,3)$ given by projective duality. This yields a surjective morphism of groups
$$
\begin{array}{ccll}
\Psi:& \Aut(\overline{M}_{0,0}(\mathbb{G}(1,3),2)) & \longrightarrow & S_2\\ 
  & \varphi & \mapsto & \sigma_{\varphi} 
\end{array} 
$$
where $\sigma_{\varphi}$ is the permutation of the extremal rays of $\Eff(\overline{M}_{0,0}(\mathbb{G}(1,3),2))$ induced by $\varphi$. Now, assume that $\sigma_{\varphi}$ is trivial. Then $\varphi$ descends to an automorphism $\overline{\varphi}$ of the variety $\mathcal{T}_4^3$ in the proof of Proposition \ref{mapgr}. By \cite[Proposition 2.5 (3)]{HT15} $\mathcal{T}_4^3$ is Fano and the morphism $\rho:\mathcal{T}_4^3\rightarrow\sec_4(\mathcal{V}^{3})=\mathbb{P}^9$ in the proof of Proposition \ref{mapgr} is induced by a multiple of $-K_{\mathcal{T}_4^3}$. Hence, $\overline{\varphi}$ in turn descends to an automorphism of $\sec_4(\mathcal{V}^{3})=\mathbb{P}^9$ stabilizing the branch locus $\sec_3(\mathcal{V}^3)$. Since the group of automorphisms of $\mathbb{P}^9$ stabilizing $\sec_3(\mathcal{V}^3)$ is isomorphic to $PGL(4)$ we get an exact sequence
$$1\rightarrow S_2 \rightarrow \Aut(\mathcal{T}_4^3)\rightarrow PGL(4)\rightarrow 1.$$
Note that $PGL(4)$ acts on $\overline{M}_{0,0}(\mathbb{G}(1,3),2)$ and hence on $\mathcal{T}_4^3$. So the last morphism in the sequence has a section, and $\Aut(\mathcal{T}_4^3)\cong PGL(4)\rtimes S_2$.

Now, the morphism $\Psi$ yields the exact sequence
$$1\rightarrow \Aut(\mathcal{T}_4) \rightarrow \Aut(\overline{M}_{0,0}(\mathbb{G}(1,3),2))\rightarrow S_2\rightarrow 1$$
and since the last morphism in this sequence has a section we get the claim. 

When $n>3$ it is enough to argue as in the case $n = 3$ noticing that in this case $D_{unb}^{1,n}$ and $D_{deg}^{1,n}$ can not be switched and applying Proposition \ref{aut_sec}.
\end{proof}

\subsection{On the anti-canonical divisor}
In this last section we study the positivity of the anti-canonical divisor of the varieties in Propositions \ref{mcdq3}, \ref{mcd_C} and \ref{MCD_G}. Recall that a normal and $\mathbb{Q}$-factorial projective variety $X$ is
\begin{itemize}
\item[-] Fano if $-K_X$ is ample;
\item[-] weak Fano if $-K_X$ is nef and big;
\item[-] log Fano if there exists an effective divisor $D\subset X$ such that $-(K_X+D)$ is ample and the pair $(X,D)$ is Kawamata log terminal.
\end{itemize}
Clearly, Fano implies weak Fano which in turn implies log Fano. As a consequence of Kodaira's lemma \cite[Proposition 2.2.6]{La04} $X$ is log Fano if and only if there exists an effective divisor $D\subset X$ such that $-(K_X+D)$ is nef and big and the pair $(X,D)$ is Kawamata log terminal. Moreover, if $X$ and $Y$ are normal and $\mathbb{Q}$-factorial projective varieties which are isomorphic in codimension one then $X$ is log Fano if and only if $Y$ is so. We refer to \cite{GOST15} for further information on these notions. Finally, by \cite[Corollary 1.3.2]{BCHM10} if $X$ is log Fano then it is a Mori dream space.

\subsubsection{The anti-canonical divisor of $\mathcal{Q}(n,3)$}
If $n = 2$ then $\sec_3^{(1)}(\mathcal{V}^n)$ is the space of complete conics that is the blow-up of $\mathbb{P}^5$ along $\mathcal{V}^2$. So 
$$
-K_{\sec_3^{(1)}(\mathcal{V}^2)} = 6D_1^{\mathcal{Q}}-2E_1^{\mathcal{Q}} = 2(D_1^{\mathcal{Q}}+D_2^{\mathcal{Q}}).
$$
Assume $n\geq 3$. By \cite[Theorem 1.1]{dJS17} we have that
$$
-K_{\overline{M}_{0,0}(\mathbb{P}^n,2)} = \frac{3(n+1)}{4}\mathcal{H}-\frac{n-7}{4}\Delta
$$
and hence Proposition \ref{isoQ} yields 
$$
-K_{\sec_3^{(1)}(\mathcal{V}^n)} = \frac{3(n+1)}{2}D_1^{\mathcal{Q}}-(n-1)E_1^{\mathcal{Q}} = \frac{7-n}{2}D_1^{\mathcal{Q}}+(n-1)D_{2}^{\mathcal{Q}} = 3D_1^{\mathcal{Q}}+\frac{n-1}{2}D_3^{\mathcal{Q}}.
$$
Therefore, $\sec_3^{(1)}(\mathcal{V}^n)$ is Fano if and only if $1\leq n < 7$, weak Fano for $n = 7$ and log Fano for $n\geq 8$.

Now, note that by Proposition \ref{tcones} the tangent cone of $\sec_3^{(1)}(\mathcal{V}^n)$ at a point of $\sec_2^{(1)}(\mathcal{V}^n)\setminus (\sec_2^{(1)}(\mathcal{V}^n)\cap E_1^{\mathcal{Q}})$ is a cone with vertex of dimension $2n$ over $\mathcal{V}^{n-2}$. Hence, $\sec_3^{(1)}(\mathcal{V}^n)$ looks, locally around a point of $\sec_2^{(1)}(\mathcal{V}^n)\setminus (\sec_2^{(1)}(\mathcal{V}^n)\cap E_1^{\mathcal{Q}})$, as the weighted projective space $\mathbb{P}(1^{n-1},2^{2n+1})$. Therefore, $\sec_3^{(1)}(\mathcal{V}^n)$ has quotient singularities of type $\frac{1}{2}(1^{n-1})$ along $\sec_2^{(1)}(\mathcal{V}^n)\setminus (\sec_2^{(1)}(\mathcal{V}^n)\cap E_1^{\mathcal{Q}})$ and the discrepancy of the canonical divisor of $\mathcal{Q}(n,3)$ with respect to $E_2^{\mathcal{Q}}$ is $\frac{n-3}{2}$. Summing up we have
\stepcounter{thm}
\begin{equation}\label{KQn}
-K_{\mathcal{Q}(n,3)} = \frac{7-n}{2}D_1^{\mathcal{Q}}+(n-1)D_{2}^{\mathcal{Q}} - \frac{n-3}{2}E_2^{\mathcal{Q}} = 2D_1^{\mathcal{Q}} + 2D_2^{\mathcal{Q}} + \frac{n-3}{2}D_3^{\mathcal{Q}}.
\end{equation}
Hence, by Proposition \ref{mcdq3} and (\ref{KQn}) we get that  $\mathcal{Q}(n,3)$ if Fano for $n\geq 4$ and weak Fano for $n = 3$.

\subsubsection{The anti-canonical divisor of $\mathcal{C}(n,m,2)$}
The first two Chern classes of the tangent bundle $T_{\mathbb{P}^n\times \mathbb{P}^m}$ of $\mathbb{P}^n\times \mathbb{P}^m$ are given by 
$$
c_1 = (n+1)H_n + (m+1)H_m,\: c_2 = \binom{n+1}{2}H_n^2+(n+1)(m+1)H_nH_m+\binom{m+1}{2}H_m^2. 
$$
Hence, by \cite[Theorem 1.1]{dJS17} we have
\begin{scriptsize}
$$
-K_{\overline{M}_{0,0}(\mathbb{P}^n\times \mathbb{P}^m,(1,1))} = \frac{(n+1)(2n+m+3)}{2n+2m+4}\mathcal{K}^n + \frac{(n+1)(m+1)}{(n+m+2)}\mathcal{K}^{n,m} + \frac{(m+1)(2m+n+3)}{2n+2m+4}\mathcal{K}^m - \frac{nm-3n-3m-7}{2n+2m+4}\Delta
$$
\end{scriptsize}
and plugging in the relation $\Delta = 2\mathcal{K}^{n,m}-\mathcal{K}^{n}-\mathcal{K}^{m}$ from \cite[Section 2.2]{Op05} we get
\stepcounter{thm}
\begin{equation}\label{Knm}
-K_{\overline{M}_{0,0}(\mathbb{P}^n\times \mathbb{P}^m,(1,1))} = (n-1)\mathcal{K}^{n} + 4\mathcal{K}^{n,m}+(m-1)\mathcal{K}^{m}.
\end{equation}
As a consequence of Propositions \ref{mcd_C}, \ref{isoC} and (\ref{Knm}) we see that $\mathcal{C}(n,m,2)$ is Fano for all $n,m\geq 1$.

\subsubsection{The anti-canonical divisor of $\sec_4^{(2)}(\mathcal{V}^n)$}
By Proposition \ref{mapgr} there is a $2$-to-$1$ morphism 
$$\varphi:\overline{M}_{0,0}(\mathbb{G}(1,n),2)\rightarrow\sec_4^{(2)}(\mathcal{V}^n)$$ branched along $E_1^{\mathcal{Q}}$ and $\sec_3^{(2)}(\mathcal{V}^n)$. Note that $\sec_3^{(2)}(\mathcal{V}^n)$ is a divisor in $\sec_4^{(2)}(\mathcal{V}^n)$ if and only if $n = 3$. In this case $\sec_4^{(2)}(\mathcal{V}^3)$ is the space of complete quadrics of $\mathbb{P}^3$. So its anti-canonical divisor is given by 
$$
-K_{\sec_4^{(2)}(\mathcal{V}^3)} = 10D_1^{\mathcal{Q}} - 5E_1^{\mathcal{Q}} - 2E_2^{\mathcal{Q}}
$$  
and by Proposition \ref{MCD_G} $\sec_4^{(2)}(\mathcal{V}^3)$ is Fano. 

Assume that $n\geq 4$. Then $\sec_3^{(2)}(\mathcal{V}^n)$ has codimension greater than one in $\sec_4^{(2)}(\mathcal{V}^n)$ and so it does not play any role in the Riemann-Hurwitz formula relating the canonical divisors of $\overline{M}_{0,0}(\mathbb{G}(1,n),2)$ and $\sec_4^{(2)}(\mathcal{V}^n)$. By \cite[Remark 2.4]{CC10} we have that 
$$
-K_{\overline{M}_{0,0}(\mathbb{G}(1,n),2)} = \frac{11-n}{4}H^{1,n}_{\sigma
_{1,1}} + \frac{3n-1}{4}H^{1,n}_{\sigma
_{2}}+\frac{7-n}{4}\Delta^{1,n}.
$$
Write $-K_{\sec_4^{(2)}(\mathcal{V}^n)} = aD_1^{\mathcal{Q}} + b E_1^{\mathcal{Q}} + cE_2^{\mathcal{Q}}$. Since $\varphi^{*}D_1^{\mathcal{Q}} = H^{1,n}_{\sigma
_{1,1}}$, $\varphi^{*}D_2^{\mathcal{Q}} = T^{1,n}$, $\varphi^{*}D_3^{\mathcal{Q}} = H^{1,n}_{\sigma
_{2}}$, $\varphi^{*}E_1^{\mathcal{Q}} = 2D^{1,n}_{unb}$, $\varphi^{*}E_2^{\mathcal{Q}} = \Delta^{1,n}$
we have that 
$$
-K_{\overline{M}_{0,0}(\mathbb{G}(1,n),2)} = \varphi^{*}(-K_{\sec_4^{(2)}(\mathcal{V}^n)})-D^{1,n}_{unb} = \frac{4a+6b-3}{4} H^{1,n}_{\sigma
_{1,1}} + \frac{1-2b}{4} H^{1,n}_{\sigma
_{2}} + \frac{4c-2b+1}{4}\Delta^{1,n}
$$
where we used the relation $D^{1,n}_{unb} = \frac{1}{4}(3H^{1,n}_{\sigma
_{1,1}} - H^{1,n}_{\sigma
_{2}}-\Delta^{1,n})$ in \cite[Section 3]{CC10}. Finally,
\stepcounter{thm}
\begin{equation}\label{Ksec4}
-K_{\sec_4^{(2)}(\mathcal{V}^n)} = (2n+2)D_1^{\mathcal{Q}} - \frac{3n-2}{2}E_1^{\mathcal{Q}}-(n-2)E_2^{\mathcal{Q}} = 2D_1^{\mathcal{Q}}+\frac{6-n}{2}D_2^{\mathcal{Q}}+(n-3)D_3^{\mathcal{Q}}.
\end{equation}
By Proposition \ref{MCD_G} and (\ref{Ksec4}) we get that $\sec_4^{(2)}(\mathcal{V}^n)$ is Fano for $3\leq n\leq 5$ and weak Fano for $n = 6$. Furthermore, writing 
$$
-K_{\sec_4^{(2)}(\mathcal{V}^n)} = (8-n)D_1^{\mathcal{Q}}+3D_3^{\mathcal{Q}}+(n-6)P
$$
we see that $\sec_4^{(2)}(\mathcal{V}^n)$ is log Fano for $n\leq 8$.

\bibliographystyle{amsalpha}
\bibliography{Biblio}

\end{document}